\documentclass[12pt]{amsart}
\usepackage{etex}

\usepackage{amsmath}
\usepackage{mathrsfs, euscript}
\usepackage{bbm}
\usepackage{amssymb}
\usepackage{amscd}
\usepackage[all,cmtip]{xy}
\usepackage{enumerate}
\usepackage{mathabx}
\usepackage{tikz}

\textwidth 160mm
\textheight 220mm
\topmargin 0mm
\oddsidemargin 0pt
\evensidemargin 0pt	

\newtheorem{theorem}{Theorem}[section]

\newtheorem{lemma}[theorem]{Lemma}

\newtheorem{proposition}[theorem]{Proposition}

\newtheorem{example}[theorem]{Example}

\newtheorem{remark}[theorem]{Remark}
\newtheorem{definition}[theorem]{Definition}

\newtheorem{notation}[theorem]{Notation}

\newcommand{\A}{\mathcal{A}}

\newcommand{\C}{\mathbb{C}}

\newcommand{\range}{\mathcal{B}}
\newcommand{\domain}{\mathcal{A}}

\newcommand{\Domain}{\mathbb{A}}
\newcommand{\E}{\mathcal{E}}
\newcommand{\EE}{\mathbb{E}}

\newcommand{\N}{\mathbb{N}}
\newcommand{\M}{\mathcal{M}}
\newcommand{\MM}{{\bf M}}
\newcommand{\D}{\mathcal{D}}

\newcommand{\s}{\mathcal{S}}

\newcommand{\ii}{{\bf i}}

\begin{document}
\unitlength=1mm
\special{em:linewidth 0.4pt}
\linethickness{0.4pt}
\title[]{Operator valued  random matrices and asymptotic freeness}
\author{Weihua Liu}
\maketitle
\begin{abstract}

We show that the limit laws of random matrices, whose entries are conditionally independent operator valued random variables having  equal second moments proportional to the size of the matrices, are operator valued semicircular laws.
Furthermore, we prove an operator valued analogue of Voiculescu's asymptotic freeness theorem.
By replacing  conditional independence with Boolean independence, we show that the  limit laws of  the random matrices are Operator-valued Bernoulli laws.
\end{abstract}

\section{Introduction}

Random matrices were first used to study population by Wishart \cite{Wis}, where the moments of random matrices are computed.   
The natural question regarding their distribution was raised in the pioneering work of Wigner  \cite{Wig}. 
Wigner  showed  that the spectral distributions of the $N\times N$ selfadjoint matrices, whose entries are  complex valued random variables having mean $0$ and variance $1/N$ that are independent up to the symmetry constraint $h_{ij}=\bar h_{ij}$, converge to the semicircular law
$$\frac{1}{2\pi}\sqrt{4-t^2}dt.$$
On the other side,  to attack the isomorphism problem of free group von Neumann algebras,  Voiculescu introduced his free probability theory in which the central notion \lq\lq free independence\rq\rq   is seen as a noncommutative analogue of the classical probabilistic concept of \lq\lq independence\rq\rq for random variables
\cite{V7}.  
It is shown that the central limit law for freely independent random variables  is exactly the semicircular law. 
Based on the occurrence  of the semicircular law in both random matrices theory and free probability theory, Voiculescu  found an amazing connection between these two theories by the notion of \lq\lq asymptotic freeness\rq\rq \cite{V6}.  
It says that classical independent random matrices become asymptotically free in the large $N$ limit.  
With the help of this connection,  people could use tools from one theory to another. 
For instance,  free probability methods are used for random matrices' computation \cite{BMS,RE}. 
On the other hand, by studying the structures of random matrices, isomorphisms between  certain von Neumann algebras are found \cite{Dy1,Dy2,Dy3, Ru}.  See  \cite{AGZ}  \cite{MS} for more developments and applications of asymptotic freeness. Besides those direct applications of asymptotic freeness,  a more powerful notion of \lq\lq free entropy\rq\rq was derived from it \cite{V5,V4}.  It was shown that the spectral measures of Gaussian Random matrices satisfy a large deviation principle and Voiculescu' first version of free entropy plays the role of  the relative entropy in Sanov's Theorem\cite{BG}.

In the 1990's, Voiculescu generalized free probability to amalgamated free product of $C^*$-algebras over a subalgebra in \cite{V3}, replacing scalar-valued expectations to conditional  expectations onto a certain subalgebra. The output of the conditional expectation is an operator, thus people the call the generalized theory  operator valued  probability theory or $\range$-valued(also $A$-valued \cite{Sh4}) probability theory.
As in the scalar case, operator valued free probability is also a natural analogue of classical probability theory in many aspects.  
For instance,  operator-valued independence is a universal independence relation in the sense of  Speicher,   and  has central limit laws and many other properties in analogue of classical probability.   
See  \cite{Sp1}  for a combinatorial development and  see   \cite{V8,V9} for an analytic development of   operator valued free probability theory.   
For developments and applications  of operator valued free central limit law, see  \cite{Sh8} for the connection between operator valued central limit law and band random matrices and see \cite{Sh4,PS,Sh5} for constructing isomorphisms between von Neumann algebras.

Since the ranges of operator-valued conditional expectations are not necessarily scalars, in $C^*$-operator-valued probability,  the second moment of an operator-valued random variable is in general a completely positive map.
It must be mentioned here that completely positive maps are not merely mathematical generalizations of states on $C^*$-algebras or von Neumann algebras.  
Due to the landmark works of  von Neumann \cite{von} and Gleason\cite{Gl},  a von Neumann algebra together with a normal state is a suitable frame work for a quantum system whose dimension $\geq 3$. 
However, the generalization of Gleason's theorem to an operator valued setting by Busch \cite{Bu} seems to be  more natural because it works for quantum systems of  any dimension. 
On the other hand, as the development of modern technologies, e.g.  in areas of  quantum information and quantum computer,  the completely positive maps are the proper mathematical concept to describe quantum channels\cite{NC}. 
Also, completely positive maps are used to characterize  quantum entanglements\cite{HHH}. 
Therefore, the interest of  studying operator valued probability does not only lay in a theoretical level but also in potential applications in the other areas.

The main purpose of this paper is to give an operator-valued generalization of  Voiculescu' asymptotic freeness, i.e.
we will introduce an operator-valued random matrix model and prove a theorem of operator-valued asymptotic
freeness.  
 Even though there many ways to develop the property,  we will focus on a probabilistic way.
 One will see that we are not only generalize the frame work to operator valued probability.  
 Once we restrict our attention back to  scalar noncommutative probability, 
 our work are still generaliation of previous works of asymptotice freeness \cite{ Dy2, Ry, Sh7}  at least in the following aspects:  the diagonal elements  are not necessarily uniformly bounded,  the entries can be chose to be mixed q-Gaussian random variables.  The following is the  main theorem. The asymptotic freeness of  operator valued extended models  will be considered in Section 6. 

\begin{theorem}\normalfont\label{main}
Let $\range$ be a $C^*$-algebra with norm $\|\cdot\|$ and $(\domain,\E:\domain\rightarrow \range)$ be a $\range-$valued probability space.
        Let  $S$ be an index set and $\{a(i,j;n,s)|s\in S, 1\leq i\leq j \leq m, n\in \N\}$ be a family of $\range$-random variables from $\domain$  such that 
\begin{itemize}
\item[Y1)]  $a(i,j;s,n)=a(i,j;s,n)^*,$  for all $1\leq i,j\leq n$, $s\in S$,
\item[Y2)]  $\E[a(i,j;s,n))]=0,$ for all $1\leq i,j\leq n$, $s\in S$,
\item[Y3)]  $\E[a(i,j;s,n)\bullet a(j,i;s,n))]=\frac{1}{n}\eta_s(\bullet)$ is a completely positive map from $\range$ to $\range$, for all $1\leq i,j\leq n$, $s\in S$,
\item[Y4)] for each $m$, there exists an $M_m>0$ such that  $$\sup\limits_{\substack{s_1,...,s_m\in S\\ 1\leq i_1,\cdots i_m, j_1,\cdots, j_m
\leq n}}\|{a(i_1,j_1;s_1,n)b_1a(i_2,j_2;s_2,n)b_2\cdots b_{m-1}a(i_m,j_m;s_m,n)}\|\leq M_mn^{-m/2}\prod\limits_{k=1}^{m-1}\|b_k\|,$$
\item[Y5)] the family $\{a(i,j;n,s)|s\in S, 1\leq i\leq j \leq m\}$ of random variables are $\range-$valued conditionally independent.
\end{itemize}
Let  $$ Y(s,n)=\sum\limits_{1\leq i,j, \leq n}  a(i,j;s,n)\otimes e(i,j;n)\in \domain \otimes M_n.$$
In addition,  let 
$D(t,n)=\sum\limits_{1\leq i \leq n}  b(i;t,n)\otimes e(i,i,n)\in\range\otimes M_n$ for  $t$ taking values in some set $T$ such that 
\begin{itemize}
\item[D1)] the joint distribution of $\{D(t,n)\}_{t\in T}$ converges weakly in norm.
\item[D2)] $$\lim\limits_{n\rightarrow \infty}\frac{\|D(t,n)\|^k}{n}=0,$$ for all $t\in T$, $k\in\N$.
\item[D3)] $$ \limsup\frac{\sum\limits_{i=1}^n\|b(i;t,n)\|^k}{n}< \infty,$$ for all $t\in T$, $k\in\N$.
\end{itemize}

For each $n$, let $\E_n:\domain\otimes M_n(\C)$ be the map that 
$$\E_n[(a_{i,j})_{i,j=1,\cdots,n}]=\frac{1}{n}\sum\limits_{i=1}^n\E[a_{i,i}].$$
Then the joint distributions of the  family of sets of random variables 
$$\{Y(s,n)\}_{s\in S}\cup\{D(t,n)|t\in T\}$$
with respect to $\E_n$ converge weakly in norm to the joint distribution of  the family of  $$\{Y_s\}_{s\in S}\cup\{D_t|t\in T\}$$  such that the family of subsets $\{ \{Y_s\}|s\in S \}\cup\{\{D_t|t\in T\}\}$ are freely independent. 
Moreover,  for each $s\in S$,  the distribution of $Y_s$ is a $\range$-valued semicircular law with variance $\eta_s.$
\end{theorem}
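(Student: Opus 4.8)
The plan is to proceed by the method of moments, reducing the assertion about weak convergence in norm of joint distributions to the convergence of all mixed $\range$-valued moments
$$\E_n[Z_1 b_1 Z_2 b_2 \cdots b_{m-1} Z_m], \qquad Z_k \in \{Y(s,n)\}_{s\in S} \cup \{D(t,n)\}_{t\in T},\ b_k \in \range,$$
and then showing that the limits coincide with the corresponding moments of a family in which a $\range$-valued semicircular family $\{Y_s\}$ (with covariances $\eta_s$) is freely independent from $\{D_t\}$. First I would treat moments involving only the $Y(s,n)$'s. Expanding $Y(s_r,n) = \sum_{i,j} a(i,j;s_r,n)\otimes e(i,j;n)$ and using $e(i,j;n)e(k,\ell;n) = \delta_{jk} e(i,\ell;n)$ together with the definition of $\E_n$ (which keeps only the diagonal and averages with a factor $1/n$), the moment collapses to a cyclic sum
$$\frac{1}{n}\sum_{k_0,\ldots,k_{m-1}} \E[a(k_0,k_1;s_1,n)\,b_1\,a(k_1,k_2;s_2,n)\,b_2\cdots b_{m-1}\,a(k_{m-1},k_0;s_m,n)],$$
indexed by closed walks $k_0\to k_1 \to \cdots \to k_{m-1}\to k_0$ on $\{1,\ldots,n\}$.

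Next, invoking conditional independence (Y5) and the mean-zero condition (Y2), the $\range$-valued expectation of such a product factors according to the partition that groups the variables sharing the same label $s_r$ and the same unordered index pair $\{k_{r-1},k_r\}$; any block consisting of a single, otherwise-unmatched variable forces the whole term to vanish. The core of the argument is then a power-counting (genus) estimate: by condition Y3 each matched pair contributes a factor of order $n^{-1}$ through the completely positive map $\eta_{s}$, while the number of admissible index configurations under a given matching is governed by the topology of the associated diagram. A non-crossing pairing of the $m$ positions (so $m=2p$) leaves exactly $p+1$ free indices and therefore contributes at order $\frac{1}{n}\cdot n^{-p}\cdot n^{p+1} = O(1)$, whereas crossing pairings and all partitions containing a block of size $\ge 3$ leave strictly fewer free indices and are $o(1)$. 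Here condition Y4 is essential: it bounds the remaining higher moments uniformly in norm, so that the suppressed terms tend to $0$ in the $C^*$-norm rather than merely in some weaker sense.

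It then remains to identify the surviving sum. Each non-crossing pair partition, together with the constraint $s_r = s_u$ within matched pairs, yields a nested application of the maps $\eta_{s}$ to the coefficients $b_k$, which is precisely the recursive moment formula for a $\range$-valued semicircular family with covariances $(\eta_s)_{s\in S}$; the requirement that matched positions carry equal labels is exactly what encodes freeness among the distinct $Y_s$. Finally I would bring in the diagonal matrices $D(t,n)$: a factor $D(t,n)$ at position $r$ forces $k_{r-1}=k_r$ and inserts $b(k_{r-1};t,n)$, and conditions D1--D3 guarantee that the corresponding averaged sums converge (D1, D3) while the terms carrying large norms are negligible (D2). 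A pairing of the $Y$-positions that is crossing with respect to the interspersed $D$-positions is again suppressed, so in the limit the only surviving diagrams are those that are non-crossing relative to the $D$'s as well; this is exactly the combinatorial signature of the vanishing of mixed free cumulants, giving asymptotic freeness of $\{Y_s\}$ from $\{D_t\}$.

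I expect the main obstacle to be the power-counting estimate carried out in operator norm: one must show, uniformly over all index choices and all labels $s\in S$, that the non-planar and higher-block contributions vanish in norm, which forces careful bookkeeping of the walk topology and a decisive use of the uniform bound Y4, together with the precise factorization rule supplied by $\range$-valued conditional independence, whose bimodule structure is considerably more delicate than in the scalar case.
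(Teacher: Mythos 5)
Your proposal is correct and follows essentially the same route as the paper: expand the mixed moments into closed-walk sums over index tuples, use conditional independence together with Y2) to kill any configuration containing an unmatched entry, isolate the noncrossing pair partitions by the free-index power count (this is exactly the paper's property-P and outer-Kreweras-complement machinery, the identity $\ker\ii=OK(\pi)$ being your walk-side description, with Y4) supplying the norm bounds on the suppressed crossing and higher-block terms), and then identify the surviving nested $\eta_s$-expressions with the operator-valued semicircular moment recursion while handling the diagonal matrices through D1)--D3), just as the paper does via its recursive and partial-sum lemmas. One caveat on wording: your claim that the expectation ``factors according to the partition'' must be understood only as the recursive peeling of innermost matched pairs --- conditional independence does not determine mixed moments over arbitrary (e.g.\ crossing) partitions, which is precisely why those contributions are estimated via Y4) rather than evaluated, as your power-counting step in fact does.
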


Let us briefly explain the conditions of Theorem \ref{main}. 
Condition Y1) ensures that the random matrices $Y(s,n)$ are selfadjoint.  
Conditions Y2)and Y3) means that , for each $s\in S$.  $Y(s,n)$ has mean zero entries and equal second moments.   
Condition Y4) means that the family of random matrices have finite mixed moments in all degrees.  
In Condition Y5), the conditional independence realtion  is a natural noncommutative analogue of classical independence relation in the viewpoint of the distributional symmetries, because with one more assuption that random variables are identically distributed,  by an extended de Finetti theorem, it is equivalent to the exchangeability for the infinite sequences of random variables in scalar noncommutaive probability spaces with  faithful states\cite{Ko}.
Moreover, Condition Y5) is not only satisfied by freely independent  and classically independent random variables, but also for $q$-Gaussian even mixed $q-$Gaussian random variables\cite{BKS, BMS,BS1,BS2,BS3}.  To the diagonal elements $D(t,n)$, condition D1) is an operator valued analogue of the weak convergence .
Conditions D2) and D3) allow the diagonal elements to  slowly approach an unbounded operators.  To satisfy D2), one can choose $\|D(t,n)\|\leq f(\ln n)$ where $f$ is a polynomial. Therefore,  classical Gaussian random variables can be approached by $D(t,n)$.

In the realm of noncommutative probability,  exchangeable sequences of random variables are not limited to classical independent, freely independent and q-Gaussian random variables.
 If we allow an absence of the unit, then we have the so-called  Boolean independence which we can use to construct an exchangeable sequence of random variables \cite{Le,Liu,SW}.
  Therefore, we can construct symmetric random matrices with Boolean random matrices. 
   We will show that the large $N$ limit of Boolean random matrices does not converge to the semicircular law but to the Bernoulli law.

Besides the firs in the first introduction section, the rest of paper is organized as follows.
In Section 2,  we introduce necessary notations and definitions. 
In Section 3,  we develop some  combinatorial  results for proving the main result.
In Section 4,  we prove Theorem \ref{main}.
In Section 5,  we study the extend matrix model in the sense of Dykema and generalize Theorem \ref{main}. 
In Section 6,  we study  $N$ limit laws of Boolean random matrices.

\section{preliminaries}

In this section, we recall some necessary definitions and notations in free probability. We will start with operator valued settings.  For people who are interested in scalar case, one just need to replace the ranges of the conditional expectations by the set of complex numbers.  For more details, see \cite{NS, VDN} for the scalars setting  and see \cite{PV,Sp1} for operator-valued settings.

\begin{definition} \normalfont A $\range$-valued probability space $(\domain, \E:\domain\rightarrow \range)$ consists of a unital algebra  $\range$, a unital algebra $\domain$ which is also a $\range$-bimodule and a conditional expectation $\E:\domain\rightarrow \range$ i.e.
 $$\E[b_1ab_2]=b_1\E[a]b_2,  \,\,\, \E[b 1_\domain]=b,$$
for all $b_1,b_2,b\in\range$, $a\in\A$ and $1_\domain$ is the unit of $\domain$.  The elements of $\A$ are called $\range$-valued random variables.
Suppose that  $\range$ is a unital $C^*$-algebra and $\domain$ is a $*$-algebra,   the conditional expectation $\E$ is said to be  positive if 
 $$\E[aa^*]\geq 0,$$
 for  all $a\in \domain.$ An element $x\in \domain$ is selfadjoint if $x=x^*$.
\end{definition}

 Let $1_\range$ be the unit of $\range$, one should be careful that $1_\range1_\domain$ is not necessarily  equal to$1_\domain$. 
Because $\E[\range1_\domain] =\range$ is injective, $\range$ can be considered as the subalgebra $\range 1_\domain$ of  $\domain$.  
 In addition, if $1_\range1_\domain=1_\domain$, $\range$ can be considered as a unital subalgebra of $\domain$.

\begin{example}\normalfont Let $\domain=M_n(\range) $ be the algebra of $n\times n$ matrices over $\range$.   Let $\E_1:\domain\rightarrow \range$ be a map such that
$$\E_1[(b_{ij})^n_{i,j=1}]=\frac{1}{n}\sum\limits_{k=1}^n b_{ii},$$
for all $ (b_{ij})^n_{i,j=1}\in \domain$, and let  $E_2:\domain\rightarrow \range$ be a map such that
$$\E_2[(b_{ij})^n_{i,j=1}]= b_{11},$$
for all $ (b_{ij})^n_{i,j=1}\in \domain$. 
Then, one can easily check that both $(\domain, \E_1:\domain\rightarrow \range)$  and $(\domain, \E_2:\domain\rightarrow \range)$  are well defined $\range$-valued probability space. 
For $(\domain, \E_2:\domain\rightarrow \range)$,  there are different ways to embed $\range$ into $\domain$.  For example, $\range$ can be considered as $\range\otimes e_{11}$, where $e_{11}$ is the $n\times n $ matrix whose $(1,1)$ entry is $1$ and all the other entries are $0$, or $\range\otimes I_n$, where $I_n$ is the unit of $M_n(\range)$.
\end{example}

Given a $\range$-bimodule $*$-algebra $\domain$, then $M_n(\domain)=\domain\otimes M_n(\C)$ is again a $\range$-bimodule $*$-algebra with the bimodule actions
 $$b_1(a_{i,j})_{i,j=1}^nb_2=(b_1a_{i,j}b_2)_{i,j=1}^n,$$
for all $b_1,b_2\in\range$.
Given a $\range$-bimodule map $\E:\domain\rightarrow \range$, then it has a natural extension to $\E_n:M_n(\domain)\rightarrow \range$ that 
$$E_n[(a_{i,j})_{i,j=1}]=\frac{1}{n}\sum\limits_{k=1}^nE[a_{kk}],$$  
which is also a $\range$-bimodule map. 

Throughout the paper, $\range$ is always $C^*$-algebra, $\domain$ is $*$-algebra and  the conditional expectation $E$ is positive. 
  Recall that the map $\E$ is a said to be completely positive if $\E_n$ is positive for all $n\in\N$.  
  It follows Exercise 3.18 from \cite{Pa} that $\E$ is completely positive if $\E$ is a positive conditional expectation.  Therefore, for $n\in \N$, $(M_n(\domain),\E_n:M_n(\domain)\rightarrow \range)$ is a well-defined $\range$-valued probability space.

We denote by $\range\langle X_i, X^*_i|i\in I\rangle$  the $*$-algebra freely generated by $\range$ and non-commuting indeterminants $\{X_i, X^*_i|i\in I\}$, where $I$ is an index set.  
$\range\langle X_i, X^*_i|i\in I\rangle$ has a natural $*$-operation which send $X_i$ to $X_i^*$.
The elements in $\range\langle X_i, X^*_i|i\in I\rangle$  are called $\range$-polynomials. 
In addition,  $\range\langle X_i, X^*_i|i\in I\rangle_0$ denotes the subalgebra of $\range\langle X_i, X^*_i|i\in I\rangle$ which does not contain a constant term.

\begin{definition}\normalfont Given a $\range$-valued probability space  $(\domain, \E:\domain\rightarrow \range)$ and a family of random variables $(x_i)_{i\in I}$ of  $\domain$.  
The joint distribution of $(x_i)_{i\in I}$  is $\range$-linear map $\mu: \range\langle X_i, X^*_i|i\in I\rangle\rightarrow \range$  such that for $m\in \N$,
$$\mu(b_1X_{i_1}^{\epsilon_1}b_1X_{i_2}^{\epsilon_2}\cdots b_mX_{i_m}^{\epsilon_m}b_{m+1})=\E[b_1x_{i_1}^{\epsilon_1}b_1x_{i_2}^{\epsilon_2}\cdots b_mx_{i_m}^{\epsilon_m}b_{m+1}],$$
where $b_1,\cdots, b_{m+1}\in \range$, $i_1,\cdots, i_m\in I$ and $\epsilon_1,\cdots,\epsilon_m\in \{1, *\}$.
\end{definition}

\begin{notation}\normalfont 
Given a family of random variables $(x_i)_{i\in I}$ from $\domain$ and a $\range$-valued polynomial $p\in\range\langle X_i, X^*_i|i\in I\rangle$, then $p((x_i)_{i\in I})$ is the element in $\domain$ obtained by
replacing indeterminants $X_i$ by $x_i$ for all $i\in I$. For example, given $p=b_1X_{i_1}b_1X_{i_2}\cdots b_mX_{i_m}b_{m+1}\in \range\langle X_i, X^*_i|i\in I\rangle$, then $p((x_i)_{i\in I})=b_1x_{i_1}b_1x_{i_2}\cdots b_mx_{i_m}b_{m+1}$.
\end{notation}

Now, we are ready  to introduce  independence relations.
\begin{definition}\normalfont
Let  $(\A,\E:\A\rightarrow \range)$ be a $\range$-valued probability space.
\begin{itemize}
\item
 A family of unital $*$-subalgebras $\{\A_i\supset \range \}_{i\in I}$  are said to be conditionally independent with respect to $\E$ if 
$$\E[a_1a_2a_3]=\E[a_1\E[a_2]a_3],$$
whenever  $a_1,a_3\in alg_\range\{A_i|i\in I_1\}$ and $a_2\in alg_\range\{A_i|i\in I_2\}$ such that $I_1\cap I_2=\emptyset$, where $alg_\range\{A_i|i\in I_k\}$ is the $*$-algebra generated by the family of subalgebras $\{A_i|i\in I_k\}$ and $\range$. A family of random variables $ (x_i)_{i\in I}$ are said to conditionally independent with respect to  $\E$, if the unital subalgebras $\{\A_i\}_{i\in I}$ which are generated by $x_i$ and $B$ respectively are conditionally  independent.

\item A family of unital $*$-subalgebras $\{\A_i\supset \range \}_{i\in I}$  are said to be freely independent with respect to $\E$ if 
$$\E[a_1\cdots a_n]=0,$$
whenever $i_1\neq i_2\neq \cdots\neq i_n$, $a_k\in \A_{i_k}$ and $\E[a_k]=0$ for all $k$. A family of $ (x_i)_{i\in I}$ are said to be freely independent over $\range$, if the unital subalgebras $\{\A_i\}_{i\in I}$ which are generated by $x_i$ and $B$ respectively are freely independent, or equivalently 
$$\E[P_1(x_{i_1})P_2(x_{i_2})\cdots P_n(x_{i_n})]=0,$$
whenever $i_1\neq i_2\neq \cdots\neq i_n$, $P_1,...,P_n\in \range\langle X,X^*\rangle$ and $\E[P_k(x_{i_k})]=0$ for all $k$.\\

\item A family of non-unital $*$-subalgebras $\{\A_i\supset \range \}_{i\in I}$  is said to be boolean independent with respect to $E$ if 
$$\E[a_1\cdots a_n]=\E[a_1]\E[a_2]\cdots \E[a_n],$$
whenever  $a_k\in \A_{i_k}$ and $i_1\neq i_2\neq\cdots\neq i_n$.    A family of random variables $\{x_i\}_{i\in I}$ are said to be boolean independent over $\range$, if the non-unital subalgebras $\{\A_i\}_{i\in I}$ which are generated by $x_i$ and $B$ respectively are boolean independent, or equivalently 
$$\E[P_1(x_{i_1})P_2(x_{i_2})\cdots P_n(x_{i_n})]=\E[P_1(x_{i_1})]\E[P_2(x_{i_2})]\cdots \E[P_n(x_{i_n})],$$
whenever $i_1\neq i_2\neq\cdots\neq i_n$ and $P_1,...,P_n\in\range\langle X,X^*\rangle_0$. 
\end{itemize}
\end{definition}

\begin{remark}\normalfont
Conditional independence relation is not a universal relation in the sense of Speicher \cite{Sp} for the reason that  some mixed moments of the conditionally independent random variables are not uniquely determined by their pure moments.  
Also, it is shown  that classical independence, free independence and Boolean independence are the only commutative universal independence relations in scalar noncommutative probability. In addition, free independence and classical independence are special case of conditional independence, but Boolean independence is not.
\end{remark}

\begin{definition}\normalfont
For each $n$,  let $\{x_{n,i}|i\in I\}$ be a family of random variables  from a $\range$-valued probability space $(\domain, \E_n:\domain_n\rightarrow \range)$ and let $\mu_n$ be their joint distribution. We say that the sequence $(\mu_n)_{n\in \N}$ converges weakly in norm if 
the sequence $(\mu_n(p))_{n\in \N}\subset \range$ converges in norm   for all
for all $\range$-valued polynomial $p\in\range\langle X_i,X_i^*|i\in I \rangle$.
\end{definition} 

The definition is slightly different from it in \cite{CS} because we do not require the sequence $(\{x_{n,i}|i\in I\})_{n\in N}$ converge to a specific family of random variables in the sense of weakly-norm convergence. 
However, we have the following result which say that we can always construct a family of random variables to which  the sequence $(\{x_{n,i}|i\in I\})_{n\in N}$ converges.

\begin{proposition}\label{Existence of limit distribution}\normalfont
 For each $n$,  let $\{x_{n,i}|i\in I\}$ be a family of random variables  from a $\range$-valued probability space $(\domain, \E_n:\domain_n\rightarrow \range)$ and let $\mu_n$ be their joint distribution.  If the sequence $(\mu_n)_{n\in N}$  converges weakly in norm  then there exists a  family  of random variables $\{x_{i}|i\in I\}$,  from a $\range$-valued probability space $(\domain, \E:\domain\rightarrow \range)$, whose joint distribution is $\mu$,  such that   
$\mu_n(p)$ converges to $\mu(p)$ in norm for all $\range$-valued polynomial $p\in\range\langle X_i,X_i^*|i\in I \rangle$.
If $\domain_n$ are $C^*$-algebras and $\sup\limits_{i\in I,n\in \N}\|x(i,n)\|\leq \infty$,  then $\domain$ can be chosen as a $C^*$-algebra.
\end{proposition}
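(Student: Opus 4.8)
The plan is to build the limiting object by hand out of the only data available, the limiting moment map, and then promote it to a $C^*$-algebra when the extra norm bound is present.

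First I would define the candidate limit distribution by $\mu(p):=\lim_{n}\mu_n(p)$ for every $p\in\range\langle X_i,X_i^*|i\in I\rangle$; this limit exists in norm by the assumption that $(\mu_n)_{n}$ converges weakly in norm. Since each $\mu_n$ is $\range$-linear and satisfies $\mu_n(b_1pb_2)=b_1\mu_n(p)b_2$ and $\mu_n(b)=b$ for $b,b_1,b_2\in\range$, the same identities pass to the limit, so $\mu$ is a conditional expectation from $\range\langle X_i,X_i^*|i\in I\rangle$ onto $\range$. Positivity is inherited in the same way: for any polynomial $p$ we have $\mu_n(p^*p)=\E_n\bigl(p((x_{n,i}))^*p((x_{n,i}))\bigr)\ge 0$, and because the positive cone of a $C^*$-algebra is norm-closed, $\mu(p^*p)=\lim_n\mu_n(p^*p)\ge 0$. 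This already settles the $*$-algebra case: take $\domain:=\range\langle X_i,X_i^*|i\in I\rangle$, $\E:=\mu$, and $x_i:=X_i$; then $(\domain,\E)$ is an $\range$-valued probability space whose joint distribution of $(x_i)_{i\in I}$ is, by construction, $\mu$, with $\mu_n(p)\to\mu(p)$ for all $p$.

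Second, for the $C^*$ statement I would run a Hilbert-module GNS (KSGNS-type) construction on $\mu$. Here complete positivity of $\mu$ is available for free: each $\mu_n$ is completely positive because $\E_n$ is a positive conditional expectation on the $C^*$-algebra $\domain_n$, and complete positivity survives the norm limit. The key estimate, where the uniform bound $C:=\sup_{i,n}\|x_{n,i}\|<\infty$ enters, is the operator inequality $\mu\bigl((X_ip)^*(X_ip)\bigr)\le C^2\,\mu(p^*p)$ in $\range$. To obtain it, work in $\domain_n$: since $x_{n,i}^*x_{n,i}\le C^2 1_{\domain_n}$, the element $a=p((x_{n,i}))$ satisfies $(x_{n,i}a)^*(x_{n,i}a)\le C^2 a^*a$; applying the positive map $\E_n$ and letting $n\to\infty$ yields the claim, as the order relation survives the norm limit. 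I would then equip $\range\langle X_i,X_i^*|i\in I\rangle$ with the $\range$-valued pre-inner product $\langle p,q\rangle:=\mu(p^*q)$ and right action $p\cdot b:=pb$, quotient by the null submodule $\{p:\mu(p^*p)=0\}$, and complete to a Hilbert $\range$-module $\mathcal{H}$. The estimate shows that left multiplication by $X_i$ descends to the quotient and extends to an adjointable operator $\pi(X_i)$ with $\|\pi(X_i)\|\le C$ and $\pi(X_i)^*=\pi(X_i^*)$, the adjoint identity following from $\mu((X_ip)^*q)=\mu(p^*X_i^*q)$; the left action of $\range$ is bounded for the same reason. This produces a $*$-representation $\pi$ of the free $*$-algebra.

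Finally I would set $\domain:=C^*\bigl(\pi(\range)\cup\{\pi(X_i):i\in I\}\bigr)\subseteq\mathcal{L}(\mathcal{H})$, let $\xi$ be the class of $1_\range$ (a unit vector, since $\langle\xi,\xi\rangle=\mu(1)=1_\range$), put $x_i:=\pi(X_i)$, and define $\E(T):=\langle\xi,T\xi\rangle$. On the dense subalgebra $\pi(\range\langle X_i,X_i^*|i\in I\rangle)$ one computes $\E(\pi(p))=\mu(p)$ and $\E(\pi(b_1)\pi(p)\pi(b_2))=b_1\mu(p)b_2$, so by continuity $\E$ is a positive unital conditional expectation of $\domain$ onto $\range$, and the joint distribution of $(x_i)_{i\in I}$ with respect to $\E$ is exactly $\mu$; since $\|x_i\|\le C$, the algebra $\domain$ is a genuine $C^*$-algebra. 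The \emph{main obstacle} is this $C^*$ part: one must check that the pre-inner product and the left actions genuinely pass to the completed module (well-definedness on the quotient, boundedness, adjointability), and this rests entirely on the operator inequality $\mu((X_ip)^*(X_ip))\le C^2\mu(p^*p)$, so the heart of the argument is extracting that inequality from the uniform norm bound via positivity of the $\E_n$. The $*$-algebra case, by contrast, is purely formal.
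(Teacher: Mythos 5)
Your proposal is correct, but it takes a genuinely different route from the paper. The paper works concretely inside the direct sum $\bigoplus_{n}\domain_n$: assuming without loss of generality that $\domain_n$ is generated by $\range$ and the $x_{n,i}$, it sets $x_i:=(x_{n,i})_{n}$, observes that every element of the subalgebra generated by $\range$ and the $x_i$ is a fixed polynomial evaluated coordinatewise, defines $\E[(y_n)_n]:=\lim_n\E_n[y_n]$ (well defined by weak norm convergence), and, for the $C^*$ part, endows this algebra with $\|y\|_{\domain}:=\liminf_n\|y_n\|$ and completes. You instead take the universal object $\range\langle X_i,X_i^*\,|\,i\in I\rangle$ with the limit functional $\mu=\lim_n\mu_n$ for the algebraic statement, and run a KSGNS construction off $\mu$ for the $C^*$ statement, with the operator inequality $\mu\bigl((X_ip)^*(X_ip)\bigr)\le C^2\mu(p^*p)$ (extracted from $x_{n,i}^*x_{n,i}\le C^2 1_{\domain_n}$ via positivity of $\E_n$ and closedness of the positive cone) doing the real work. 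The paper's approach buys concreteness: the limit variables sit literally above the approximating ones, and no representation theory is needed. Your approach buys robustness on the $C^*$ side: the paper's $\liminf$ function is in fact problematic, since $\liminf$ fails the triangle inequality in general (one needs $\liminf_n\|y_n+z_n\|\le\liminf_n\|y_n\|+\liminf_n\|z_n\|$, which can fail when the two norm sequences oscillate out of phase; a standard repair is a limit along a free ultrafilter, or quotienting by a $\limsup$-null ideal), whereas your KSGNS completion sidesteps this issue entirely. Two small points to tighten: boundedness of the left $\range$-action on the module cannot be argued inside the free algebra, which carries no order, so it should be proved exactly like your key estimate, by evaluating in $\domain_n$ to get $\mu_n(p^*b^*bp)\le\|b\|^2\mu_n(p^*p)$ and passing to the limit (your ``for the same reason'' presumably means this); and you should note that $\pi$ is isometric on $\range$ because $\langle\pi(b)\xi,\pi(b)\xi\rangle=\mu(b^*b)=b^*b$, so $\range$ genuinely embeds in your $\domain$ and $\E$ is a conditional expectation onto it in the paper's sense.
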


\begin{proof}
With out the loss of generality, for each $n$, we assume that $\domain_n$ is generated by $\range$ and $\{x(n,i)|i\in I\}$. 
Then  $\bigoplus\limits_{n=1}^{\infty}\domain_n $ is a $\range$-bimodule $*$-algebra and $x_i=(x_{n,i})_{n\in\N}\in\bigoplus\limits_{n=1}^{\infty}\domain_n$ for all $i\in I$. 
Let $\domain$ be the subalgebra of  $\bigoplus\limits_{n=1}^{\infty}\domain_n $, which is generated by $\{x_i|i\in I \}$ and $\range$. 
Notice that that for $(y_n)_{n\in \N}\in \domain$, there exists a $\range$-polynomial $p\in\range\langle X_i,X_i^*|i\in I\rangle$ such that $y_n=p(\{x(n,i)|i\in I\})$ for all $n\in \N$.  Since $\mu_n$ converges weakly in norm,  we can define a map   $\E:\domain\rightarrow \range$ such that
$$\E[(y_n)_{n\in \N}]=\lim\limits_{n\rightarrow \infty} \E_n[y_n],$$
for all $(y_n)_{n\in \N}\in \domain$. 
One can easily check that $\E$ is a conditional expectation from $\domain$ to $\range$.  Moreover, $\E$ is positive since $\E_n$ are positive.

Suppose that $\domain_n$ are $C^*$-algebras and $\sup\limits_{i\in I,n\in \N}\|x(i,n)\|\leq \infty$, then $\sup\limits_{n\in \N}\|p(\{x(n,i)|i\in I\})\| \leq \infty$ for all $p\in\range\langle X_i,X_i^*|i\in I\rangle $.  Therefore, for each $y=(y_n)_{n\in \N}\in \domain$, $\|y\|_\domain=\liminf\limits_{n\rightarrow \infty} \|y_n\|$ is well defined. One can easily check that $\|yy^*\|_\domain=\|y\|_\domain^2$, $\|y_1+y_2\|_\domain\leq \|y_1\|_\domain+\|y_2\|_\domain$ for all $y,y_1,y_2\in \domain.$  It follows that $\domain$ is a pre-$C^*$-algebra with respect to $\|\|_\domain$, e.g. the completion $\overline{\domain}^{\|\|_\domain}$ is a $C^*$-algebra. 
\end{proof}

\begin{definition}\normalfont
For each $n$,  let $\{x_{n,i}|i\in I\}$ be a family of random variables  from a $\range$-valued probability space $(\domain, E_n:\domain_n\rightarrow \range)$ and let $\mu_n$ be their joint distribution.  We say that the sequence of families of random variables $(\{x_{n,i}|i\in I\})_{n\in\N}$ are asymptotically free with amalgamation over $\range$ if  $\mu_n$ converges weakly in norm to the joint distribution of  a family of   freely independent random variables $\{x_{i}|i\in I\}$.
\end{definition}

\subsection{Partitions and cumulants}

\begin{definition}\label{partition}\normalfont
Let $S$ be an ordered set:
\begin{itemize}
\item[1.] A partition $\pi$ of a set $\s$ is a collection of disjoint, nonempty sets $V_1,...,V_r$ whose union is $\s$.  $V_1,...,V_r$ are called  blocks of $\pi$, and we denote by $|\pi|=r$ the number of blocks of $\pi$.  
Given  $s,t\in \s$,  we denote by $t\sim_\pi s$ if $s,t$ are contained in the same block  of $\pi.$ 
A block $V$ of $\pi$ is an interval if there is no triple $(s_1,s_2,r)$ such that $s_1<r<s_2$, $s_1,s_2\in V$, $r\not\in V$.  
The collection of all partitions of $\s$ will be denoted by $P(\s)$.   
We denote by $[m]$ the set $\{1,\cdots,m\}$, and denote by $P(m)$ short for $P([m])$.
\item[2.] Given two partitions $\pi$, $\sigma$, we say $\pi\leq \sigma$ if each block of $\pi$ is contained in a block of $\sigma$.
\item[3.]A partition $\pi\in P(\s)$ is noncrossing if there is no quadruple $(s_1,s_2,r_1,r_2)$ such that $s_1<r_1<s_2<r_2$, $s_1,s_2\in V$, $r_1,r_2\in W$ and $V,W$ are two different blocks of $\pi$.   
The set of noncrossing partitions  of  $\s$ will be denoted by $NC(\s)$.   
We also write $NC(m)$ short for $NC([m])$.
\item[4.]A partition $\pi\in P(\s)$ is an  interval partition if   every block of $\pi$ is an interval. 
 It is apparent that all interval partitions are noncrossing partitions.  
 The set of interval  partitions  of  $\s$ will be denoted by $IN(\s)$.   
 We also write $IN(m)$ short for $IN([m])$.  
\item[5.] Let ${\bf i}=(i_1,...,i_k)$ be a sequence of indices from $I$. We denote by $\ker \ii$ the element of $P(k)$ whose blocks are the equivalence classes of the relation
$$s\sim t\Leftrightarrow i_s=i_t$$
\item [6.] If $\pi\in P(\s)$ is a partition such that  every  block of $\pi$ contains exactly $2$ elements, then we call $\pi$ a pair partition.  We denote by $P_2(\s)$ set of pair partitions of $\s$.    The set of noncrossing pair partitions $P_2(m)\cap NC(m)$ will be denoted by $NC_2(m)$, and the set of interval pair partitions $P_2(m)\cap IN(m)$ will be denoted by $IN_2(m)$.
\end{itemize}
\end{definition}

\begin{remark}\normalfont
The noncrossing partitions can be defined recursively, i.e.  a partition $\pi\in P(\s)$ is noncrossing if and only if $\pi=\{\s\}$ or it has an interval block $V\neq \s$ such that $ \pi\setminus\{V\}$ is a noncrossing partition on $\s\setminus V$. Furthermore,  the noncrossing pair partitions can be defined recursively, i.e.  a partition $\pi\in P(\s)$ is a noncrossing pair partition if and only if $\pi=\{\s\}$ when $R$ has exactly $2$ elements or it has an interval block $V$ having exactly $2$ elements and $ \pi\setminus\{V\}$ is a noncrossing pair partition on $\s\setminus V$.
\end{remark}

\begin{definition}\normalfont Let $(\domain, \E:\domain\rightarrow\range)$ be a $\range$-valued probability space:
\begin{itemize}
\item[1.] For each $ n\in\N$, let $\rho^{(n)}$ be an $n$-$\range$-linear map $\rho^{(n)}:\A^{ \otimes_{\range}n}\rightarrow \range$, i.e.
$$\rho^{(n)}(b_0a_1b_1,a_2b_2,...,a_nb_n)=b_0\rho^{(n)}(a_1,b_1a_2,...,b_{n-1}a_n)b_n,$$
for all $b_0,...,b_n\in \range$.   For $m\in \N$ and $\pi\in NC(m)$, we define $\rho^{(\pi)}: \A^{ \otimes_{\range}m}\rightarrow \range$ recursively as follows. If $\pi$  has only one block, namely $[m]$, then 
$$\rho^{\pi}(a_1,a_2,...,a_m)=\rho^{(m)}(a_1,a_2,...,a_m),$$
for any $a_1,\cdots,a_n\in \domain$. Otherwise, we define
$$\rho^{(\pi)}(a_1,...,a_n)=\rho^{(\pi\setminus V)}(a_1,...,a_l\rho^{(s)}(a_{l+1,...,a_{l+s}}),a_{l+s+1},...,a_n),$$
for any $a_1,\cdots,a_n\in \domain$, where $V=(l+1,l+2,...,l+s)$ is an interval block of $\pi$.

\item[2.] Define the $\range$-valued moments function $\E^{(n)}:\A^{ \otimes_{\range}n}\rightarrow \range$ by 
$$\E^{(n)}(a_1,\cdots, a_n)=\E[a_1 a_2 \cdots a_n].$$  
 The $\range$-valued free cumulants function $\kappa_\E^{(\pi)}:\A^{ \otimes_{\range}m}\rightarrow \range$ are defined recursively for $\pi\in NC(m)$, $m\geq 1$, by the following equation: for each $n\in \N$ and $a_1,\cdots,a_n$, we have
 $$ \kappa_\E^{(n)}(a_1,\cdots,a_n)= \E[a_1\cdots a_n]-\sum\limits_{\pi\in NC(n),\pi\neq \{[n]\}} \kappa_\E^{(\pi)}(a_1,\cdots,a_n).$$
 Similarly,   the $\range$-valued Boolean cumulants function $b_\E^{(\pi)}:\A^{ \otimes_{\range}m}\rightarrow \range$ are defined recursively for $\pi\in IN(m)$, $m\geq 1$, by the following equation: for each $n\in \N$ and $a_1,\cdots,a_n$, we have
 $$ b_\E^{(n)}(a_1,\cdots,a_n)= \E[a_1\cdots a_n]-\sum\limits_{\pi\in IN(n),\pi\neq \{[n]\}} b_\E^{(\pi)}(a_1,\cdots,a_n).$$
\end{itemize}
\end{definition}

Due to Speicher,  free independence  can be characterized in terms of vanishing of mixed free cumulants.  Similarly  Boolean independence  can be characterized in terms of vanishing of mixed Boolean cumulants. 

\begin{theorem}\normalfont
Let $(\domain, \E:\domain\rightarrow \range)$ be a $\range$-valued probability space and $\{x_i|i\in I\}$ be a family of random variables in $\domain$. Then the family $\{x_i|i\in I\}$ is freely independent with amalgamation over $\range$ if and only if
$$ \kappa_\E^{(\pi)}(x_{i_1}^{\epsilon_1}b_1, x_{i_2}^{\epsilon_2}b_2,\cdots,x_{i_m}^{\epsilon_m}b_m)=0,$$
whenever $i_1,\cdots,i_m\in I$, $b_1,\cdots, b_m\in \range$, $d_1,\cdots, d_m\in \{1,*\}$ and $\pi\in NC(n)$ such that $\pi\not\leq \ker\ii$, where $\ii=(i_1,\cdots,i_m).$
Similarly, the family $\{x_i|i\in I\}$ is Boolean independent with amalgamation over $\range$ if and only if
$$ b_\E^{(\pi)}(x_{i_1}^{\epsilon_1}b_1, x_{i_2}^{\epsilon_2}b_2,\cdots,x_{i_m}^{\epsilon_m}b_m)=0,$$
whenever $i_1,\cdots,i_m\in I$, $b_1,\cdots, b_m\in \range$, $d_1,\cdots, d_m\in \{1,*\}$ and $\pi\in IN(n)$ such that $\pi\not\leq \ker\ii$, where $\ii=(i_1,\cdots,i_m).$
\end{theorem}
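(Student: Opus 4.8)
The plan is to prove the free case in detail and then obtain the Boolean case by the parallel argument with $NC(m)$ replaced by $IN(m)$. The first step is to unwind the recursive definition of $\kappa_\E^{(\pi)}$ into the operator-valued moment-cumulant formula
$$\E[a_1\cdots a_n]=\sum_{\pi\in NC(n)}\kappa_\E^{(\pi)}(a_1,\ldots,a_n),$$
valid for all $a_1,\ldots,a_n\in\domain$, and to record that this relation determines the family $(\kappa_\E^{(n)})_{n\geq 1}$ uniquely among $\range$-multilinear maps (Möbius inversion over the lattice $NC(n)$). Because every element of the unital $*$-algebra generated by $x_i$ and $\range$ is an $\range$-linear combination of words in $x_i,x_i^*$, and because $\kappa_\E^{(n)}$ is $\range$-multilinear, it suffices to control cumulants whose arguments are of the generating form $x_{i_k}^{\epsilon_k}b_k$; passing between such single-letter arguments and arbitrary $\range$-polynomial arguments is handled by the operator-valued ``cumulants with products as entries'' formula, which expresses a cumulant of grouped arguments as a sum of cumulants over noncrossing partitions respecting the grouping. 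I would establish this formula first, since it is the technical device linking the hypothesis (stated for generating arguments) to the defining relations of freeness (stated for general $\range$-polynomials).

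For the direction asserting that vanishing of mixed cumulants implies freeness, assume $\kappa_\E^{(\pi)}=0$ whenever $\pi\not\leq\ker\ii$. Given $P_1,\ldots,P_n\in\range\langle X,X^*\rangle$ with $i_1\neq i_2\neq\cdots\neq i_n$ and $\E[P_k(x_{i_k})]=0$, apply the moment-cumulant formula with one argument per $P_k$:
$$\E[P_1(x_{i_1})\cdots P_n(x_{i_n})]=\sum_{\pi\in NC(n)}\kappa_\E^{(\pi)}(P_1(x_{i_1}),\ldots,P_n(x_{i_n})).$$
Via the products-as-entries formula the hypothesis extends from generating to polynomial arguments, so only $\pi\leq\ker\ii$ survive. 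Since the indices alternate, no block of such a $\pi$ can contain two adjacent elements; but every noncrossing partition has an interval block (as in the recursive characterization recorded above), and an interval block of size $\geq 2$ would contain adjacent elements, forcing that block to be a singleton $\{k\}$, which contributes the factor $\kappa_\E^{(1)}(P_k(x_{i_k}))=\E[P_k(x_{i_k})]=0$. Hence every surviving term vanishes, the alternating centered moment is zero, and this is exactly freeness over $\range$.

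For the converse, assume the family is free and argue that all mixed cumulants vanish. The cleanest route is to define auxiliary $\range$-multilinear maps $\tilde\kappa^{(n)}$ agreeing with $\kappa_\E^{(n)}$ on arguments drawn from a single $\A_i$ and set to $0$ on genuinely mixed arguments, and then to verify, using freeness together with the centering identity $\kappa_\E^{(n)}(\ldots,b,\ldots)=0$ for $b\in\range$ and $n\geq 2$, that the $\tilde\kappa^{(\pi)}$ still satisfy $\E[a_1\cdots a_n]=\sum_{\pi}\tilde\kappa^{(\pi)}(a_1,\ldots,a_n)$; uniqueness then forces $\tilde\kappa=\kappa_\E$, i.e.\ mixed cumulants vanish. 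Equivalently one can induct on $n$: after reducing to alternating, centered arguments (removing $\range$-valued parts and absorbing equal-index neighbors through the products-as-entries formula), the term $\pi=\{[n]\}$ is isolated in the moment expansion, while every other $\pi$ carries either a singleton block (killed by centering) or a proper interval block of size $\geq 2$ with mixed indices (killed by the induction hypothesis), so the vanishing of the full moment by freeness yields $\kappa_\E^{(n)}=0$. I expect this converse to be the main obstacle: the bookkeeping in the operator-valued setting is delicate because the $\range$-bimodule insertions from inner blocks must be tracked through the nesting in the definition of $\kappa_\E^{(\pi)}$, and one must check that absorbing such $\range$-elements never changes the index ($\A_i$-membership) of an argument, so that a non-monochromatic block always produces a genuinely mixed, hence vanishing, factor.

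Finally, the Boolean statement follows the same two-step scheme with $NC(m)$ replaced by $IN(m)$ and $\kappa_\E^{(\pi)}$ by $b_\E^{(\pi)}$. The combinatorics is in fact simpler: an interval partition all of whose blocks are monochromatic with respect to an alternating $\ker\ii$ must consist entirely of singletons, so the moment-cumulant formula collapses to
$$\E[P_1(x_{i_1})\cdots P_n(x_{i_n})]=\E[P_1(x_{i_1})]\cdots\E[P_n(x_{i_n})],$$
which is precisely Boolean independence. Here one works throughout with $\range\langle X,X^*\rangle_0$ and the non-unital subalgebras, and the products-as-entries reduction together with the uniqueness argument go through verbatim with interval partitions in place of noncrossing ones.
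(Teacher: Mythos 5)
Your proposal is correct, and it is the standard Speicher/Nica--Speicher argument: M\"obius-inversion uniqueness of the cumulant family, the products-as-entries formula to pass between single-letter and polynomial arguments, the singleton-interval-block analysis for the direction ``vanishing implies independence,'' and the modified-cumulant uniqueness trick for the converse, all carried over with the right care about $\range$-valued entries and, in the Boolean case, the non-unital subalgebras and constant-term-free polynomials. Note that the paper itself gives no proof of this theorem --- it is stated as a known result due to Speicher --- so your argument coincides with the proof in the cited literature rather than with anything proved in the paper.
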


Now, we introduce  the combinatorial descriptions of $\range$-valued semicircular and $\range$-valued Bernoulli random variables, which are the free analogues  and the Boolean analogues of real Gaussian random variables, respectively.  

\begin{definition}\normalfont
Let $(\domain, \E:\domain\rightarrow \range)$ be a $\range$-valued probability space.
\begin{itemize}
\item  A family $\{Y(i)|i\in I\}$ of selfadjoint random variables in $\domain$ is said to for a $\range$-valued  free centered semicircular family if or any  $m\geq 1$,  $\ii=(i_1,\cdots,i_m)\in I^m$ and $b_1,b_2,\cdots,b_{m+1}$, we have
$$ \kappa_\E^{(\pi)}(b_1Y(i_1),b_2Y(i_2),\cdots,b_mY(i_m)b_{m+1})=0,$$
unless $\pi\in NC_2(k)$ and $\pi\leq \ker\ii$, where $\ii=(i_1,\cdots,i_m).$ In particular, the family $\{Y(i)|i\in I\}$ is freely independent with amalgamation over $\range$.

\item  A family $\{Y(i)|i\in I\}$ of selfadjoint random variables in $\domain$ is said to for a $\range$-valued  Boolean centered Bernoulli  family if or any  $m\geq 1$,  $\ii=(i_1,\cdots,i_m)\in I^m$ and $b_1,b_2,\cdots,b_{m+1}$, we have
$$ b_\E^{(\pi)}(b_1Y(i_1),b_2Y(i_2),\cdots,b_mY(i_m)b_{m+1})=0,$$
unless $\pi\in IN_2(k)$ and $\pi\leq \ker\ii$, where $\ii=(i_1,\cdots,i_m).$  In particular, the family $\{Y(i)|i\in I\}$ is Boolean independent with amalgamation over $\range$.
\end{itemize}
\end{definition}
\begin{example}\normalfont Notice that both $NC(2)$ and $IN(2)$ have two elements $\{\{1\}, \{2\}\} $ and $\{\{1,2\}\}$. Let $Y$ be a $\range$-valued random variables from $(\domain,\E:\domain\rightarrow \range)$.  Then, 
$$\kappa_\E^{(1)}(Y)=b_\E^{(1)}(Y)=\E[Y]$$
and
$$\kappa_\E^{(2)}(YbY)=b_\E^{(2)}(YbY)=\E[YbY]-\E[Y]b\E[Y],$$
for all $b\in \range.$  If $Y$ has mean zero, them $\kappa_\E^{(2)}(YbY)=b_\E^{(2)}(YbY)=\E[YbY]$.
If $Y$ is a $\range$-valued semicircular random variable, then
  $$\E[Yb1Yb_2Yb_3Y]= \E[Yb1Y]b_2\E[Yb_3Y]+ \E[Yb1\E[Yb_2Y]b_3Y],$$
  for all $b_1,\cdots,b_3\in\range.$
  
 If $Y$ is a $\range$-valued Bernoulli random variable, then
  $$\E[Yb1Yb_2Yb_3Y]= \E[Yb1Y]b_2\E[Yb_3Y],$$
  for all $b_1,\cdots,b_3\in\range.$
\end{example}
 See more examples of computations of the central limit laws in  \cite{BPV,Sp1}.

\section{Some combinatorial results}

In this section, we introduce several operations on partitions and prove some results that are required in proving Theorem \ref{main}.   See  \cite{Dy2,MS,NS,Ry} for more combinatorial results, in particular the estimations of  the numbers of blocks of some partitions, related to classical random matrix theory.

\begin{notation}\normalfont Let $m$ be a natural number.
\begin{itemize} 
\item Given $\sigma\in P(m+1)$ and $\pi \in P(m)$, we define $\pi\hookrightarrow \sigma\in P(2m+1)$ be  the partition obtained by partitioning the odd numbers $\{1,\cdots, 2m+1\}$ according to $\sigma$ and the even numbers $\{2,\cdots, 2m\}$ according to $\pi$.
\item Given $\pi_1, \pi_2 \in P(m)$, we define $\pi_1\wr\pi_2\in P(2m)$ be  the partition obtained by partitioning the odd numbers $\{1,\cdots, 2m-1\}$ according to $\pi_1$ and the even numbers $\{2,\cdots, 2m\}$ according to $\pi_2$.
\item  Given a partition $\pi\in P(\s)$ and $\s'$ is a subset of $\s$, then the restriction of $\pi$ to $\s'$ will denoted by $\pi(\s')$.
\end{itemize}
\end{notation}

\begin{example} Let $m=3$.  If  $\sigma=\{\{1,4\}, \{2,3\}\}\in P(4)$ and $\pi=\{\{1,2\},\{3\}\}$ then 
$\pi\hookrightarrow \sigma=\{ \{1, 7\}, \{3,5\}, \{2,4\}, \{6\}\}$, which can be seen from the following diagram. 

\begin{center}
  \thicklines

\begin{tikzpicture}[scale=1]

\draw [thick] (0,0)-- (6,0);
\draw [thick] (0,0)-- (0,-1.5);
\draw [thick] (6,0)-- (6,-1.5);
\draw [thick] (2,-0.5)-- (2,-1.5);
\draw [thick] (2,-0.5)-- (4,-0.5);
\draw [thick] (4,-0.5)-- (4,-1.5);
\draw [thick,dashed] (1,-1)-- (3,-1);
\draw [thick,dashed] (1,-1)-- (1,-1.5);
\draw [thick,dashed] (3,-1)-- (3,-1.5);
\draw [thick,dashed] (5,-1)-- (5,-1.5);

\put (-1, -18) {$1$};
\put (9, -18) {$\bar 1$};
\put (19, -18) {$2$};
\put (29, -18) {$\bar 2$};
\put (39, -18) {$3$};
\put (49, -18) {$\bar{3}$};
\put (59, -18) {$4$};
\end{tikzpicture}
\end{center} 
$\\\ $
\end{example}

\begin{definition}\normalfont
Let $\pi\in NC(m)$.  The  Kreweras complement $K(\pi)$ is the largest partition in $NC(m)$ such that $\pi\wr K(\pi)\in NC(2m)$,  the \emph{outer  Kreweras complement} $OK(\pi)$ is the largest partition in $NC(m+1)$ such that $\pi\hookrightarrow OK(\pi)\in NC(2m+1)$ and the \emph{inner Kreweras complement} $IK(\pi)$ is the largest partition in $NC(m-1)$ such that $IK(\pi)\hookrightarrow \pi\in NC(2m-1)$.
\end{definition}
\begin{example}\normalfont
Let $m=6$. If $\pi=\{\{1,4\},\{2,3\},\{5,6\}\}\in NC_2(6)$ then $OK(\pi)=\{\{1,5,7\},\{2,4\},\{3\},\{6\}\}$ which can be see from the following diagram.

\begin{center}
  \thicklines

\begin{tikzpicture}[scale=1]

\draw [thick] (0,0)-- (12,0);
\draw [thick] (0,0)-- (0,-2.5);
\draw [thick] (12,0)-- (12,-2.5);
\draw [thick] (8,0)-- (8,-2.5);

\draw [thick] (4,-2)-- (4,-2.5);

\draw [thick,dashed] (5,-1.5)-- (5,-2.5);
\draw [thick,dashed] (3,-1.5)-- (3,-2.5);
\draw [thick,dashed] (3,-1.5)-- (5,-1.5);

\draw [thick] (6,-1)-- (6,-2.5);
\draw [thick] (2,-1)-- (2,-2.5);
\draw [thick] (2,-1)-- (6,-1);

\draw [thick,dashed] (7,-.5)-- (7,-2.5);
\draw [thick,dashed] (1,-.5)-- (1,-2.5);
\draw [thick,dashed] (1,-.5)-- (7,-.5);

\draw [thick,dashed] (11,-.5)-- (11,-2.5);
\draw [thick,dashed] (9,-.5)-- (9,-2.5);
\draw [thick,dashed] (9,-.5)-- (11,-.5);

\draw [thick] (10,-1)-- (10,-2.5);

\put (-1, -28) {$1$};
\put (9, -28) {$\bar 1$};
\put (19, -28) {$2$};
\put (29, -28) {$\bar 2$};
\put (39, -28) {$3$};
\put (49, -28) {$\bar{3}$};
\put (59, -28) {$4$};
\put (69, -28) {$\bar4$};
\put (79, -28) {$5$};
\put (89, -28) {$\bar 5$};
\put (99, -28) {$6$};
\put (109, -28) {$\bar 6$};
\put (119, -28) {$7$};
\end{tikzpicture}
\end{center} 
$\\ $
\end{example}

\begin{remark}\normalfont $K$ is a bijective \cite{NS},  hence  $IK$ is surjective and $OK$ is injective.
\end{remark}

\begin{definition}\normalfont
 A partition $\sigma\in P(m+1)$ is said to be closed if  $1$ and $(m+1)$ are in the same block of $\sigma$.
 We denote by $CP(m+1)$  the set of all closed partitions on $[m+1]$, and $CNC(m+1)$  the set of all closed noncrossing partitions on $[m+1]$.
\end{definition}
\begin{notation}\normalfont
   $\ii=(i_1,\cdots,i_m)\subseteq I$ or simply $\ii\ subset I$ means that $\ii$ is a sequence of indices from $I$. 
\end{notation}
If $\ii=(i_1,\cdots,i_m)\subseteq I$ is sequence of indices such that $\ker\ii\in CP(m)$, then $i_1=i_m$.

It is obvious that the map which send $\sigma\in CNC(m+1)$ to $\sigma([m])\in NC(m)$ is a bijection from $CNC(m+1)$ to $NC(m)$. Therefore, we have the following result.
\begin{lemma} 
The outer  Kreweras complement $OK$ is a bijection from $NC(m)$ to $CNC(m+1)$, whose inverse is $IK$.  
\end{lemma}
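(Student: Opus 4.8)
The plan is to exhibit $OK$ and $IK$ as an order-reversing adjunction (a Galois connection) between $NC(m)$ and $NC(m+1)$ with respect to partition refinement, and then to combine this with the two facts already recorded above: that the restriction map $\sigma \mapsto \sigma([m])$ is a bijection $CNC(m+1)\to NC(m)$, and that $OK$ is injective (with $IK$ surjective also available), both consequences of the bijectivity of $K$.

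First I would record the monotonicity of the relation defining the two complements. For $\pi\in NC(m)$ and $\sigma\in NC(m+1)$ write $\pi\perp\sigma$ when $\pi\hookrightarrow\sigma\in NC(2m+1)$; by definition both $OK(\pi)$ and $IK(\sigma)$ are the largest partition standing in this relation to the given one. Refining either the even or the odd positions of an interleaved partition cannot create a crossing, so $\pi\perp\sigma$ is preserved when either argument is made finer. Consequently $OK$ and $IK$ are antitone, and one gets the adjunction $\sigma\le OK(\pi)\iff \pi\perp\sigma\iff \pi\le IK(\sigma)$; in particular $\pi\le IK(OK(\pi))$ for every $\pi$.

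The one genuinely geometric step is to verify that $OK$ lands in $CNC(m+1)$, i.e. that $1$ and $m+1$ always lie in a common block of $OK(\pi)$. In the interleaved picture these correspond to the odd positions $1$ and $2m+1$, which are the global minimum and maximum of $[2m+1]$. I would argue that merging the two blocks of any $\sigma$ with $\pi\perp\sigma$ that contain these extreme positions yields a partition that is again noncrossing: a new crossing would necessarily involve one point from each of the two merged blocks together with two points of a third block, and since the merged blocks contain the global extremes, the offending quadruple rearranges into a crossing already present in $\sigma$. By maximality $OK(\pi)$ must already have $1$ and $m+1$ in one block, so $OK(\pi)\in CNC(m+1)$.

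With $OK:NC(m)\to CNC(m+1)$ well defined and injective, and $|CNC(m+1)|=|NC(m)|$ from the restriction bijection, $OK$ is automatically a bijection. To identify the inverse, set $\pi'=IK(OK(\pi))$; then $\pi\le\pi'$ and $\pi'\perp OK(\pi)$, the latter giving $OK(\pi)\le OK(\pi')$, while antitonicity of $OK$ gives $OK(\pi')\le OK(\pi)$. Hence $OK(\pi')=OK(\pi)$, and injectivity of $OK$ forces $\pi'=\pi$, i.e. $IK\circ OK=\mathrm{id}_{NC(m)}$; since $OK$ is bijective this shows $IK$ restricted to $CNC(m+1)$ is exactly $OK^{-1}$. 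I expect the main obstacle to be the closedness step, since the adjunction and the cardinality bookkeeping are formal once the monotonicity of $\hookrightarrow$ is in hand; the real care lies in the case analysis showing that joining the two extreme blocks never introduces a crossing.
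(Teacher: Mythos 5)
Your proposal is correct, but it proves the lemma along a genuinely different route than the paper. The paper's proof is essentially a one-liner: it observes that the restriction map $\sigma\mapsto\sigma([m])$ is a bijection from $CNC(m+1)$ to $NC(m)$ (the inverse adjoins $m+1$ to the block of $1$), leans on the earlier remark that bijectivity of the Kreweras complement $K$ makes $OK$ injective and $IK$ surjective, and treats the closedness of $OK(\pi)$ as self-evident. You instead organize $OK$ and $IK$ into an antitone Galois connection for the relation $\pi\perp\sigma$ (meaning $\pi\hookrightarrow\sigma\in NC(2m+1)$), supply an actual proof that $OK(\pi)$ is closed via the block-merging argument, and then close the loop by counting through the restriction bijection, after which the adjunction identifies $IK$ as the inverse with no further combinatorics. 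Your merging step is sound: in a putative crossing of the merged block with a third block $C$, the offending quadruple can always be exchanged, using that positions $1$ and $2m+1$ are the global minimum and maximum, for a quadruple witnessing a crossing of $C$ with one of the two original blocks, already present in $\sigma$. What your route buys is a rigorous treatment of exactly the two points the paper leaves implicit --- closedness of $OK(\pi)$ and the identification of $IK|_{CNC(m+1)}$ as $OK^{-1}$ --- at the cost of more order-theoretic scaffolding.

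One repair is needed in your monotonicity step. The sentence ``refining either the even or the odd positions of an interleaved partition cannot create a crossing'' is false for arbitrary set-partition refinements: splitting a block $\{a<b<c<d\}$ into $\{a,c\}$ and $\{b,d\}$ manufactures a crossing between two sub-blocks of a formerly single block. The claim is true in the only instances you invoke, but for a reason you should state: the refining partition $\pi'$ (resp.\ $\sigma'$) is itself required to lie in $NC(m)$ (resp.\ $NC(m+1)$), and a crossing between two sub-blocks of a common block sits entirely on even (resp.\ odd) positions of $[2m+1]$, hence transcribes verbatim into a crossing inside $\pi'$ (resp.\ $\sigma'$) as a partition of $[m]$ (resp.\ $[m+1]$), which is excluded; crossings involving distinct original blocks lift to crossings already present. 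With that line added, the set $\{\sigma\in NC(m+1) : \pi\hookrightarrow\sigma\in NC(2m+1)\}$ is genuinely downward closed in $NC(m+1)$ (and symmetrically on the $\pi$ side), your adjunction $\sigma\le OK(\pi)\iff\pi\le IK(\sigma)$ holds, and the rest of the argument --- injectivity of $OK$ plus $|CNC(m+1)|=|NC(m)|$ forcing bijectivity, then $IK\circ OK=\mathrm{id}$ forcing $IK=OK^{-1}$ on $CNC(m+1)$ --- goes through as written.
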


\begin{definition}Define 
$$d_m:P(m+1)\rightarrow P(m)$$ 
such that  for all $\sigma\in P(m+1)$,  $s\sim_{d_m(\sigma)} t $ if one of the following cases holds:
\begin{itemize}
\item $s\sim_{\sigma} t$ and $(s+1)\sim_{\sigma} (t+1)$
\item $s\sim_{\sigma} (t+1)$ and $(s+1)\sim_{\sigma} (t)$
\end{itemize}
\end{definition}

For convenience,  we denote by   $d: \amalg_{m\geq 1}P(m+1) \rightarrow \amalg_{m\geq 1}P(m)$ to be the map  such that $d|_{P(m+1)}=d_m$ for $m\geq 1$.

\begin{example}
Let $\sigma=\{\{1,5,7\},\{2,3,4\},\{6\}\}\in P(7)$. Then $d(\sigma)=\{\{1,4\},\{2,3\},\{5,6\}\}\in P(6)$.
\end{example}

\begin{lemma}\label{single entry}
 Given $\ii=(i_1,\cdots,i_m)\subseteq I$, let $\sigma=\ker \ii$ and $\pi=d(\sigma)$. Then, $s\sim_\pi t$ if and only if the two sets $\{i_s,i_{s+1}\}$ and $\{i_t,i_{t+1}\}$ are the same.
\end{lemma}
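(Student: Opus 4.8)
The plan is to unwind the two definitions directly; no estimates or induction are needed, since the statement is really a bookkeeping identity once the indices are translated. First I would fix the indexing: because $\ii$ has $m$ entries, $\sigma=\ker\ii$ lies in $P(m)$, so the map $d$ acts here as $d_{m-1}:P(m)\rightarrow P(m-1)$ and $\pi=d(\sigma)$ lies in $P(m-1)$. Consequently $s,t$ range over $[m-1]$, which is exactly the range on which $i_{s+1}$ and $i_{t+1}$ are defined, so that the two sets $\{i_s,i_{s+1}\}$ and $\{i_t,i_{t+1}\}$ appearing in the conclusion make sense.

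Next I would translate the relation $\sim_\sigma$ into a statement about the indices. By the definition of the kernel, for any $p,q\in[m]$ one has $p\sim_\sigma q$ if and only if $i_p=i_q$. Substituting this into the definition of $d_{m-1}$, the relation $s\sim_\pi t$ holds if and only if one of the two disjuncts is satisfied: either $i_s=i_t$ and $i_{s+1}=i_{t+1}$, or $i_s=i_{t+1}$ and $i_{s+1}=i_t$.

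It then remains to observe that this disjunction is precisely the condition $\{i_s,i_{s+1}\}=\{i_t,i_{t+1}\}$ as sets. The reverse direction is immediate: each disjunct matches the two elements of one pair with the two elements of the other, in order in the first case and swapped in the second, and in either case the two sets coincide. For the forward direction I would split on whether $i_s=i_{s+1}$: when $i_s\neq i_{s+1}$, both pairs are genuine two-element sets, and an equality of such sets forces either an order-preserving or an order-reversing identification of their elements, which are exactly the first and second disjunct.

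The only point requiring care, and the nearest thing to an obstacle, is the degenerate case $i_s=i_{s+1}$, where $\{i_s,i_{s+1}\}$ collapses to a singleton $\{a\}$; here set equality forces $i_t=i_{t+1}=a$ as well, so both disjuncts hold simultaneously and in particular $s\sim_\pi t$. This confirms that the set formulation correctly absorbs the multiplicities in the degenerate case, completing the equivalence.
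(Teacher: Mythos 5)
Your proof is correct and follows essentially the same route as the paper's: both unwind the definition of $\ker\ii$ to translate $\sim_\sigma$ into equalities of indices and then match the two disjuncts in the definition of $d$ against the set equality $\{i_s,i_{s+1}\}=\{i_t,i_{t+1}\}$. If anything, yours is slightly more careful than the paper's version, which states the second disjunct with a typo ($i_s=i_{t+1}$ and $i_t=i_{t+1}$ instead of $i_{s+1}=i_t$) and silently passes over the degenerate case $i_s=i_{s+1}$ that you check explicitly.
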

\begin{proof}
Suppose that $\{i_s,i_{s+1}\}$ and $\{i_t,i_{t+1}\}$ are the same if and only one of the following cases is true.
\begin{itemize}
\item $i_s=i_t$ and $i_{s+1}=i_{t+1}$,
\item  $i_s=i_{t+1}$ and $i_t=i_{t+1}$.
\end{itemize}
which is equivalent to that  one of the following cases holds:
\begin{itemize}
\item $s\sim_{\sigma} t$ and $(s+1)\sim_{\sigma} (t+1)$
\item $s\sim_{\sigma} (t+1)$ and $(s+1)\sim_{\sigma} (t)$.
\end{itemize}
Statements follows the definition of $d$.
\end{proof}

\begin{lemma}\label{remove 1}\normalfont
Let $\sigma\in P(m+1)$ such that $\{p\}$ is a block of $\sigma$ and $p-1\sim_\sigma p+1$ for some $p\in[m+1]$.  Then $|\sigma|=|\sigma([m+1]\setminus\{p,p+1\})|+1$ and  $\{p-1,p\}$ is a block of $d(\sigma)$.
\end{lemma}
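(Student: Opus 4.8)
The plan is to verify both assertions directly from the definition of the map $d$ and the definition of the restriction of a partition, using the two hypotheses — that $\{p\}$ is a singleton block and that $p-1\sim_\sigma p+1$ — as the only inputs. Note first that these hypotheses implicitly require $2\leq p\leq m$, so that $p-1$, $p$, $p+1$ all lie in $[m+1]$ and $p-1,p$ lie in $[m]$.

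First I would show that $\{p-1,p\}$ is a block of $d(\sigma)$. Taking $s=p-1$ and $t=p$ in the definition of $d$, the first case would require $p\sim_\sigma(p+1)$, which is impossible since $\{p\}$ is a singleton; the second case requires $(p-1)\sim_\sigma(p+1)$ and $p\sim_\sigma p$, both of which hold, so $p-1\sim_{d(\sigma)}p$. For exactness I would rule out any further element. If $s\sim_{d(\sigma)}p$ with $s\neq p$, then in the first case $s\sim_\sigma p$ is impossible, while the second case forces $(s+1)\sim_\sigma p$, hence $s+1=p$ by the singleton hypothesis, i.e.\ $s=p-1$. Likewise, if $s\sim_{d(\sigma)}(p-1)$ with $s\neq p-1$, the first case forces $(s+1)\sim_\sigma p$, giving $s=p-1$ (excluded), while the second case forces $s\sim_\sigma p$, giving $s=p$. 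Thus the only elements identified with $p-1$ or $p$ in $d(\sigma)$ are $p-1$ and $p$ themselves, so $\{p-1,p\}$ is exactly a block.

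Next I would count blocks. The hypotheses split the blocks of $\sigma$ into three types: the singleton $\{p\}$; the (distinct) block $B$ containing both $p-1$ and $p+1$; and all remaining blocks, none of which meet $\{p,p+1\}$. Passing to $\sigma([m+1]\setminus\{p,p+1\})$, the singleton $\{p\}$ disappears, the block $B$ loses the element $p+1$ but still contains $p-1$ and so survives as a nonempty block, and every other block is left unchanged. Hence restriction deletes exactly one block, which gives $|\sigma|=|\sigma([m+1]\setminus\{p,p+1\})|+1$.

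The only delicate point is the exactness claim in the second paragraph, namely ruling out spurious identifications with $p-1$ or $p$ inside $d(\sigma)$; this is precisely where the singleton hypothesis on $p$ is essential, since it collapses the first case of the definition of $d$ to a single forced equality. The block count is then routine bookkeeping on the three block types.
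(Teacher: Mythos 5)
Your proof is correct and takes essentially the same route as the paper: a direct case analysis on the definition of $d$, using the singleton hypothesis on $\{p\}$, to show $p-1\sim_{d(\sigma)}p$ and to rule out spurious identifications, together with a straightforward count of how the blocks of $\sigma$ behave under restriction to $[m+1]\setminus\{p,p+1\}$ (the singleton $\{p\}$ disappears, the block containing $p-1$ and $p+1$ survives). If anything you are slightly more careful than the paper, which only excludes extra elements related to $p$ in $d(\sigma)$, whereas you also exclude extra elements related to $p-1$ and make the implicit constraint $2\leq p\leq m$ explicit.
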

\begin{proof}
Since $\{p\}$ is a  block of $\sigma$,  $|\sigma|=|\sigma([m+1]\setminus\{p\})|+1$.   
On the other hand,  $|\sigma([m+1]\setminus\{p\})|=|\sigma([m+1]\setminus\{p,p-1\})|$ because $p-1$ and $p+1$ are in the same block of $\sigma$. Therefore,  $|\sigma|=|\sigma([m+1]\setminus\{p,p+1\})|+1$.

Let $\pi=d(\sigma)$.   
According to the definition of $d$,  we have $p\sim_\pi (p-1)$. 
Suppose that  $p\sim_\pi q$ for some $q\neq p,p-1$. 
Then  we have  $p\sim_\sigma q$ or $p\sim_\sigma q+1$. 
Notice that none of $q$ and $q+1$ can be $p$,  it follows that $\{p\}$ is not a block of $\sigma$, which is a contradiction.  T
herefore, $\{p,p+1\}$ is a block of $d(\sigma)$.
\end{proof}

\begin{definition}\normalfont
We say that a partition $\sigma\in P(m+1)$ has property P	if $\sigma\in CP(m+1)$ and  the sizes of all blocks of $d(\sigma)$ are at least $2$.
\end{definition}

\begin{example}
 The partition $\{\{1,5,7\},\{2,3,4\},\{6\}\}\in P(7)$ has property P. The partition $\sigma'=\{\{1,4\},\{2,3\}\}\in P(4)$ does not have property P since $d(\sigma')=\{\{1,3\},\{2\}\}$ where $\{2\}$ is a block of size $1$.
\end{example}

Let $\s=\{n_1<n_2<\cdots <n_l\}$ be a subset of $\N$.  It is obvious that $P(\s)\cong P(l)$ and $NC(\s)\cong NC(l)$.
\begin{definition}\normalfont
 Let $\s=\{n_1<n_2<\cdots <n_l\}$.  A partition $\pi\in P(\s)$ has property P if $\pi$ has property P as an element in $P(l)$.
\end{definition}

\begin{lemma}\label{restriction of property P}\normalfont
Let $\sigma\in CP(m+1)$ be a closed partition such that $\{p\}$ is a block of $\sigma$ for some $p\in[m+1]$.
If $\sigma$ has property P, then $(p-1)\sim_\sigma (p+1)$ and $\sigma([m+1]\setminus\{p,p+1\})$ has property $P$.
\end{lemma}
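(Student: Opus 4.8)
The plan is to realize $\sigma$ as a kernel partition and transport the whole problem to the associated index sequence, where Lemma~\ref{single entry} describes $d(\sigma)$ explicitly. Concretely, I would choose $\ii=(i_1,\dots,i_{m+1})$ with $\ker\ii=\sigma$ by labelling each $s\in[m+1]$ by its $\sigma$-block. Since $\sigma$ is closed and $m\geq 1$, the block of $1$ also contains $m+1$, so neither $1$ nor $m+1$ can be the singleton $\{p\}$; hence $2\leq p\leq m$, both $p-1,p+1$ lie in $[m+1]$, and $p-1,p\in[m]$. Because $\{p\}$ is a block of $\ker\ii$, the value $i_p$ is distinct from every other $i_k$.

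First I would prove $(p-1)\sim_\sigma(p+1)$. By Lemma~\ref{single entry}, an index $s$ satisfies $s\sim_{d(\sigma)}(p-1)$ iff $\{i_s,i_{s+1}\}=\{i_{p-1},i_p\}$; as $i_p$ is unique this pair can occur only at $s=p-1$ or $s=p$, and $s=p$ occurs exactly when $\{i_p,i_{p+1}\}=\{i_{p-1},i_p\}$, i.e.\ when $i_{p+1}=i_{p-1}$. Property P forces the $d(\sigma)$-block of $p-1$ to have size $\geq 2$, so it must contain $p$, whence $i_{p-1}=i_{p+1}$, that is $(p-1)\sim_\sigma(p+1)$. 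By uniqueness of $i_p$ the block $\{p-1,p\}$ is then exactly a block of $d(\sigma)$, in agreement with Lemma~\ref{remove 1}.

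Next I would set $\s=[m+1]\setminus\{p,p+1\}$ and $\sigma'=\sigma(\s)=\ker\ii'$, where $\ii'$ is $\ii$ with the entries $i_p,i_{p+1}$ deleted, and check closedness of $\sigma'$. Here $1\in\s$; if $p<m$ then $m+1\in\s$ and $1\sim_\sigma(m+1)$ gives closedness, while if $p=m$ the greatest element of $\s$ is $p-1$ and $1\sim_\sigma(m+1)=(p+1)\sim_\sigma(p-1)$ gives $1\sim_\sigma(p-1)$. In either case the least and greatest elements of $\s$ share a block, so $\sigma'\in CP(m-1)$.

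The main step, and the part I expect to demand the most care, is comparing $d(\sigma')$ with $d(\sigma)$. Using $i_{p+1}=i_{p-1}$, the consecutive pairs of $\ii$ at positions $p-1,p,p+1$ equal $\{i_{p-1},i_p\},\{i_{p-1},i_p\},\{i_{p-1},i_{p+2}\}$, whereas deleting $i_p,i_{p+1}$ replaces these three pairs by the single seam pair $\{i_{p-1},i_{p+2}\}$ (when $p=m$ there is no seam and the sequence is merely truncated, removing two copies of $\{i_{p-1},i_p\}$). Thus the multiset of consecutive pairs of $\ii'$ is that of $\ii$ with two copies of $\{i_{p-1},i_p\}$ deleted and the count of $\{i_{p-1},i_{p+2}\}$ unchanged. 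Since $\{i_{p-1},i_p\}$ contains the unique value $i_p$ it occurs nowhere else, so by Lemma~\ref{single entry} the net effect on blocks is exactly to delete the block $\{p-1,p\}$ of $d(\sigma)$ while preserving the sizes of all remaining blocks up to relabeling. As every block of $d(\sigma)$ has size $\geq 2$ by Property P, so does every block of $d(\sigma')$; combined with $\sigma'\in CP(m-1)$ this shows $\sigma'$ has property P. The one point that needs attention is verifying that no surviving block loses or gains elements, in particular that the seam pair $\{i_{p-1},i_{p+2}\}$ (distinct from $\{i_{p-1},i_p\}$ because $i_{p+2}\neq i_p$) merges with precisely the other occurrences of that value, and that the boundary case $p=m$ behaves as claimed.
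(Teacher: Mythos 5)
Your proof is correct, and its skeleton coincides with the paper's: first force $p\sim_{d(\sigma)}(p-1)$ from the size-$\geq 2$ condition on blocks of $d(\sigma)$ together with the singleton $\{p\}$ (which is exactly how the paper derives $(p-1)\sim_\sigma(p+1)$), then the identical case split $p=m$ versus $p<m$ for closedness of the restriction, then reduce property P of $\sigma([m+1]\setminus\{p,p+1\})$ to deleting the block $\{p-1,p\}$ of $d(\sigma)$ (Lemma~\ref{remove 1}). Where you genuinely diverge is in execution: you realize $\sigma=\ker\ii$ and argue through the consecutive pairs $\{i_s,i_{s+1}\}$ via Lemma~\ref{single entry}, while the paper argues directly on partitions from the definition of $d$. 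This buys you something real in the final step: the paper merely asserts (``Notice that\dots'') the identity $d(\sigma([m+1]\setminus\{p,p+1\}))=[d(\sigma)]([m]\setminus\{p-1,p\})$, whereas your seam analysis --- deleting $i_p,i_{p+1}$ removes exactly the two copies of $\{i_{p-1},i_p\}$, which occur nowhere else because $i_p$ appears only at position $p$, and replaces the pair at position $p+1$ by the equal seam pair $\{i_{p-1},i_{p+2}\}$, with $p=m$ handled by plain truncation --- actually proves that identity, verifying that no surviving block of $d(\sigma)$ gains or loses elements under restriction. The paper's version is shorter; yours is self-contained precisely at the one point the paper leaves unjustified, at the modest cost of introducing the auxiliary index sequence.
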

\begin{proof}
Since $\sigma$ has property P,  the sizes blocks of $d(\sigma)$ are at least $2$.
If $(p-1)\sim_d(\sigma) q $ for some $q\neq p,p-1$, then $p\sim_\sigma q$ or $p\sim_\sigma q+1$. However, none of $q$ and $q+1$ can be $p$. It follows that $\{p\}$ is not a block of $\sigma$ which is a contradiction.
Therefore, $(p-1)\sim_d(\sigma) p$ and $p-1\sim_\sigma p+1$.

Because $\sigma$ is closed, we have $1\sim_\sigma (m+1)$.  
It follows that $1<p\leq m$. 
If $p=m$, then $1\sim_\sigma (p+1)\sim_\sigma (p-1)$. 
In this case, $[m+1]\setminus\{p,p+1\}=[m-1]$, hence $\sigma([m+1]\setminus\{p,p+1\})$ is closed.  
If $p\neq m$, then $m+1\in [m+1]\setminus\{p,p+1\}$. It follows that  $\sigma([m+1]\setminus\{p,p+1\})$ is  closed. 

Notice that $d(\sigma([m+1]\setminus\{p,p+1\}))=[d(\sigma)]([m]\setminus \{p-1,p\})$ the restriction of $d(\sigma) $ to $[m]\setminus \{p-1,p\}.$  By Lemma \ref{remove 1}, $\{p-1,p\}$ is a block of $d(\sigma)$.  Therefore,  the sizes of blocks of $[d(\sigma)]([m]\setminus \{p-1,p\})=d(\sigma)\setminus \{\{p-1,p\} \}$ are at least $2$. It follows that $\sigma([m+1]\setminus\{p,p+1\})$ has property $P$.
\end{proof}

\begin{lemma}\label{odd length}\normalfont Let $m\geq 3$ be an odd number, and let $\sigma\in CP(m+1)$ has property P. Then
$$|\sigma|\leq \frac{m+1}{2}.$$
\end{lemma}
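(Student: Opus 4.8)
The plan is to induct on the odd number $m$, decreasing by $2$ at each step, splitting on whether $\sigma$ has a block of size one. The two reduction lemmas established just above, Lemma~\ref{restriction of property P} and Lemma~\ref{remove 1}, are exactly what drive the singleton case, so the work is mostly organizing the case split and pinning down where the induction bottoms out.

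First I would suppose $\sigma$ has no singleton block. Then every block of $\sigma$ has at least two elements, and since $[m+1]$ has $m+1$ elements with $m+1$ even, a partition all of whose blocks have size $\geq 2$ has at most $(m+1)/2$ blocks; hence $|\sigma|\leq \frac{m+1}{2}$ directly. This branch uses only the block-size count, and it also serves as the base of the induction: the singleton case below can only occur once $m$ is large enough, so $m=3$ necessarily lands here.

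Next suppose $\sigma$ has a singleton block $\{p\}$. By Lemma~\ref{restriction of property P} we get $(p-1)\sim_\sigma(p+1)$ and the restriction $\sigma':=\sigma([m+1]\setminus\{p,p+1\})$ again has property P, with $\sigma'\in CP(m-1)$. Since $\sigma'$ has property P, all blocks of $d(\sigma')\in P(m-2)$ have size $\geq 2$, which is impossible when $m-2=1$; as $m-2$ is odd this forces $m-2\geq 3$, i.e. $m\geq 5$, so the inductive hypothesis applies to $\sigma'$ and gives $|\sigma'|\leq \frac{(m-2)+1}{2}=\frac{m-1}{2}$. Because $\{p\}$ is a singleton and $(p-1)\sim_\sigma(p+1)$, Lemma~\ref{remove 1} yields $|\sigma|=|\sigma'|+1$, and therefore $|\sigma|\leq \frac{m-1}{2}+1=\frac{m+1}{2}$, closing the induction.

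The main obstacle is not any single deep step but making sure the singleton branch cannot fire at the base case; the argument above handles this abstractly by noting that property P of the reduced partition already forces $m\geq 5$, so no separate hand computation at $m=3$ is needed. As a conceptual check on the bound, it is worth keeping the walk picture in mind: writing $\sigma=\ker\ii$ with $\ii=(i_1,\dots,i_{m+1})$, Lemma~\ref{single entry} shows $|d(\sigma)|$ counts the distinct unordered pairs $\{i_s,i_{s+1}\}$, i.e. the distinct edges of the closed walk $i_1\to\cdots\to i_{m+1}=i_1$; property P forces each such edge to be traversed at least twice, so $|d(\sigma)|\leq \frac{m-1}{2}$ (here the oddness of $m$ is used to sharpen $m/2$), while connectedness of the walk gives $|\sigma|\leq |d(\sigma)|+1$, reproducing $|\sigma|\leq\frac{m+1}{2}$.
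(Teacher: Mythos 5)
Your proof is correct and follows essentially the same route as the paper: the same case split on whether $\sigma$ has a singleton block, the pigeonhole bound in the no-singleton case, and the reduction via Lemma~\ref{restriction of property P} and Lemma~\ref{remove 1} with induction in steps of two. The only difference is cosmetic: where the paper verifies the base case $m=3$ by an explicit hand computation with $d(\sigma)\in P(3)$, you observe that the singleton branch is vacuous at $m=3$ (since property P of $\sigma'\in CP(2)$ would force all blocks of $d(\sigma')\in P(1)$ to have size $\geq 2$, which is impossible), which is a clean and valid shortcut.
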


\begin{proof}
When $m=3$,  $d(\sigma)\in P(3)$ whose blocks are of sizes at least $2$, thus $d(\sigma)$ has only one block. Since $1\sim_{d(\sigma)}2$, we have that 
$1\sim_\sigma 2$ or $1\sim_\sigma 3$. 
Therefore, $1,2,4$ are in the same block of $\sigma$ or $1,3,4$ are in the block of $\sigma$. 
Hence $\sigma$ has at most $2$ blocks. The statement is true in this case.

When $m=2k+1$ for $k\geq 2$, we have following  two cases:

Case 1.  $\sigma$ has no block of size $1$, then the statement follows the pigeonhole principle.

Case 2. Let $\{p\}$ be a block of $\sigma$.  Since $\sigma$ is closed, $p\neq 1, m+1$. By Lemma \ref{restriction of property P}, $\sigma([m+1]\setminus\{p,p+1\})$ has property P.  Since $\sigma([m+1]\setminus\{p,p+1\})$  can be viewed as an element in $P(m-1)$, we have 
$$|\sigma([m+1]\setminus\{p,p+1\})|\leq\frac{m-1}{2}.$$
By Lemma \ref{remove 1}, the proof is done.

\end{proof}

\begin{lemma}\label{must have single block}\normalfont Let $\pi\in NC_2(m)$ be a noncrossing pair partition. Then $\{p\}$ is a block of $OK(\pi)$ for some $p\in[m+1]$.
\end{lemma}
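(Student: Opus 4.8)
The plan is to exploit the recursive structure of noncrossing pair partitions together with the combinatorial picture defining $OK$. Since $\pi\in NC_2(m)$, the recursive characterization in the Remark guarantees that $\pi$ has an interval block $V$ of exactly two elements; because an interval block of size two consists of consecutive integers, we may write $V=\{j,j+1\}$ for some $j$ with $1\leq j\leq m-1$. My claim is that the odd point $p:=j+1\in[m+1]$ is a singleton block of $OK(\pi)$, so that in fact $p$ can always be read off directly from any size-two interval block of $\pi$.

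First I would unwind the definition of $\pi\hookrightarrow\sigma$ on $[2m+1]$, in which the element $\ell$ of the ground set $[m]$ of $\pi$ sits at the even position $2\ell$ and the element $\ell$ of the ground set $[m+1]$ of $\sigma$ sits at the odd position $2\ell-1$; one checks this normalization against the worked Example $\pi\hookrightarrow\sigma=\{\{1,7\},\{3,5\},\{2,4\},\{6\}\}$. Under this identification the block $\{j,j+1\}$ of $\pi$ joins the even positions $2j$ and $2j+2$, while the point $p=j+1$ of $\sigma=OK(\pi)$ occupies the odd position $2j+1$, which lies strictly between them. The key step is then to observe that $p$ cannot be joined to any other point without destroying noncrossingness: if $p\sim_{OK(\pi)}q$ for some $q\neq p$, then $q$ sits at the odd position $2q-1\neq 2j+1$, hence either $2q-1<2j$ or $2q-1>2j+2$, and in either case the block $\{2j,2j+2\}$ coming from $\pi$ and the pair $\{2q-1,2j+1\}$ inside a block of $OK(\pi)$ form a crossing in $\pi\hookrightarrow OK(\pi)$. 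Explicitly, in the first case $2q-1<2j<2j+1<2j+2$ and in the second case $2j<2j+1<2j+2<2q-1$, each of which realizes the forbidden pattern $s_1<r_1<s_2<r_2$ of Definition \ref{partition}. This contradicts $\pi\hookrightarrow OK(\pi)\in NC(2m+1)$, so $\{p\}$ must be a block of $OK(\pi)$.

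I would point out that the maximality clause in the definition of $OK$ is not even needed here: the argument shows that every $\sigma\in NC(m+1)$ with $\pi\hookrightarrow\sigma\in NC(2m+1)$ forces $\{j+1\}$ to be a singleton, and $OK(\pi)$ is one such $\sigma$. The only genuinely delicate point, and the place I expect the main bookkeeping effort, is pinning down the interleaving convention and the resulting positions; once the enclosing pair $2j,2j+2$ of the point $2j+1$ is correctly located, exhibiting the crossing is immediate. As a final sanity check I would revisit the Example computing $OK(\{\{1,4\},\{2,3\},\{5,6\}\})=\{\{1,5,7\},\{2,4\},\{3\},\{6\}\}$, where the size-two interval block $\{2,3\}$ indeed yields the singleton $\{3\}$ (and likewise $\{5,6\}$ yields $\{6\}$), confirming that the construction of $p$ is correct.
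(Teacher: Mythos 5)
Your proof is correct and takes essentially the same route as the paper: both arguments locate a size-two interval block $\{j,j+1\}$ of $\pi$ (available by the recursive characterization of noncrossing pair partitions) and conclude that the interleaved odd point $p=j+1$, sitting strictly between the two legs of that block in $[2m+1]$, must be a singleton of $OK(\pi)$. The only difference is that the paper dispatches the last step with ``according to the definition of $OK$,'' while you spell out the explicit crossing pattern $s_1<r_1<s_2<r_2$ and observe, correctly, that maximality of $OK(\pi)$ is never used --- a harmless elaboration of the same idea.
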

\begin{proof}
Since $\pi$ is a  a noncrossing pair partition, $\pi$ has an interval $\{p-1,p\}$ for some $p\in[m]$. According to the definition $OK$, $\{p\}$ is a block of $OK(\pi)$.
\end{proof}

\begin{lemma}\label{even length}\normalfont
 Let $m\geq 2$ be an odd number, and let $\sigma\in CP(m+1)$ has property P. Then
$$|\sigma|\leq \frac{m}{2}+1,$$
the equality holds if and only if  $\sigma=OK(\pi)$ for some $\pi\in NC_2(m)$.
\end{lemma}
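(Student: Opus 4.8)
The plan is to argue by induction on the even number $m$, following the same scheme as Lemma \ref{odd length}; since $NC_2(m)\neq\emptyset$ forces $m$ to be even, we treat $m$ as even throughout. For the base case $m=2$, a closed partition $\sigma\in CP(3)$ satisfies $1\sim_\sigma 3$, so $\sigma$ is either $\{\{1,2,3\}\}$ or $\{\{1,3\},\{2\}\}$; both have $d(\sigma)=\{\{1,2\}\}$ and hence property P, both satisfy $|\sigma|\leq 2=\frac m2+1$, and equality holds precisely for $\sigma=\{\{1,3\},\{2\}\}=OK(\{\{1,2\}\})$ with $\{\{1,2\}\}\in NC_2(2)$. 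This settles the base case.

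For the inductive step with $m\geq 4$, the inequality is obtained exactly as in Lemma \ref{odd length}. If $\sigma$ has no singleton block, then every block has size at least $2$, so by the pigeonhole principle $|\sigma|\leq\lfloor\frac{m+1}{2}\rfloor=\frac m2<\frac m2+1$; in particular equality cannot occur in this case. If $\sigma$ has a singleton block $\{p\}$, then $p\neq 1,m+1$ since $\sigma$ is closed, and Lemma \ref{restriction of property P} gives $(p-1)\sim_\sigma(p+1)$ together with the fact that $\sigma':=\sigma([m+1]\setminus\{p,p+1\})$ has property P as an element of $CP(m-1)$. The induction hypothesis yields $|\sigma'|\leq\frac{m-2}{2}+1=\frac m2$, and Lemma \ref{remove 1} gives $|\sigma|=|\sigma'|+1\leq\frac m2+1$, proving the bound.

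For the equality characterization I would track this reduction in both directions. If $|\sigma|=\frac m2+1$, then by the above the singleton case must occur and $|\sigma'|=\frac m2=\frac{m-2}{2}+1$ is itself extremal; by the induction hypothesis $\sigma'=OK(\pi')$ for some $\pi'\in NC_2(m-2)$. I then reconstruct $\sigma$ as $OK(\pi)$, where $\pi\in NC_2(m)$ is obtained from $\pi'$ by reinserting an interval pair $\{p-1,p\}$ at the slot corresponding to the singleton $\{p\}$; the point is that a closed partition with property P is recovered uniquely from $\sigma'$ once one records that $\{p\}$ is a singleton and $(p-1)\sim_\sigma(p+1)$ (one puts $p$ back as a singleton and attaches $p+1$ to the block of $p-1$, leaving every other block untouched). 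Conversely, for $\pi\in NC_2(m)$ with interval pair $\{p-1,p\}$ and $\pi'=\pi\setminus\{\{p-1,p\}\}\in NC_2(m-2)$, Lemma \ref{must have single block} shows $\{p\}$ is a singleton of $OK(\pi)$; applying the induction hypothesis to $OK(\pi')$ together with Lemma \ref{remove 1} gives $|OK(\pi)|=|OK(\pi')|+1=\frac m2+1$, while the block $\{p-1,p\}$ of $d(OK(\pi))$ and the identification of the remaining blocks with those of $d(OK(\pi'))$ (which has property P) show $OK(\pi)$ has property P.

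Both directions rest on a single compatibility statement, which I expect to be the main obstacle: that $OK$ commutes with deletion of an interval pair, namely $OK(\pi)([m+1]\setminus\{p,p+1\})=OK(\pi')$ when $\{p-1,p\}$ is an interval pair of $\pi$, $\pi'=\pi\setminus\{\{p-1,p\}\}$, and $\{p\}$ is the associated singleton of $OK(\pi)$ from Lemma \ref{must have single block}. This is a local surgery on the interleaved diagram $\pi\hookrightarrow OK(\pi)\in NC(2m+1)$: the interval pair forces the intervening odd point to be a singleton of $OK(\pi)$, and excising this local piece should commute with forming the complement. I would establish it from the recursive description of noncrossing (pair) partitions in the Remark following Definition \ref{partition} and the defining maximality of $OK$; once it is in hand the equality characterization follows, and as a byproduct one obtains $d(OK(\pi))=\pi$ for $\pi\in NC_2(m)$, which is exactly what makes property P of $OK(\pi)$ transparent.
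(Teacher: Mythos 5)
Your proposal is correct and follows essentially the same route as the paper: induction on even $m$ (the statement's ``odd'' is a typo, as you note), base case $m=2$, then the case split on whether $\sigma$ has a singleton block, using Lemma \ref{restriction of property P}, Lemma \ref{remove 1} and Lemma \ref{must have single block} exactly as the paper does. The one place you go beyond the paper is in isolating the compatibility statement $OK(\pi)([m+1]\setminus\{p,p+1\})=OK(\pi')$ for the reinserted/deleted interval pair (together with the byproduct $d(OK(\pi))=\pi$), which the paper's proof passes over silently in the sentence ``By Lemma \ref{remove 1}, the equality holds if and only if $\sigma=OK(\pi)$''; your sketch of that step via the recursive description of noncrossing partitions and the maximality defining $OK$ is sound, so this is a welcome tightening rather than a divergence.
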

\begin{proof}
When $m=2$. Since $\sigma\in CP(m+1)$, $\sigma$ can be either $\{ \{1,2,3\} \}$ or $\{\{1,3\}, \{2\}\}$. Both of them  have property P,  but just in the later case we have $|\sigma|=2=\frac{m}{2}+1$.  When $\sigma=\{\{1,3\}, \{2\}\}$, we have $d(\sigma)=\{\{1,2\}\}\in NC_2(2)$.  On the other hand, $\{\{1,2\}\}$ is the only element in $NC_2(2)$ and $OK(\sigma)=\{\{1,3\}, \{2\}\}$.

Suppose that the statement is true for $m=2k$. Now let us consider the situation for $m=2k+2$.  We have the following two cases:
Case 1.  $\sigma$ has no block of size $1$, then the statement  $|\sigma|\leq \frac{m+1}{2}<\frac{m}{2}+1$. On the other hand,  by Lemma \ref{must have single block}, $\sigma\neq OK(pi)$ for any $\pi\in NC_2(m)$.

Case 2.  $\sigma$ has a block of size $1$, say $\{p\}$. 
 Since $\sigma$ is closed, $p\neq 1, m+1$. 
 By Lemma \ref{restriction of property P}, $\sigma([m+1]\setminus\{p,p+1\})$ has property P.  
 Since $\sigma([m+1]\setminus\{p,p+1\})$  can be viewed as an element in $P(m-1)$, we have 
$$|\sigma|=|\sigma([m+1]\setminus\{p,p+1\})|+1\leq\frac{m-2}{2}+1+1=\frac{m}{2}+1.$$
The equality holds if and only if $\sigma([2k+1]\setminus\{p,p+1\})=OK(\pi')$ for some $\pi'\in NC_2(2k-2)$. 
By Lemma \ref{remove 1},  the equality holds if and only if $\sigma=OK(\pi)$ for some $\pi\in NC_2(2k)$.
\end{proof}

\begin{lemma} \label{inside OK} Let  $\pi\in NC_2(2k)$ be a noncrossing pair partition and $\sigma=OK(\pi)$.  Suppose that $\{p,q\}$ is a block of $ \pi$, then $OK(\pi([p+1,q-1]))=\sigma([p+1,q])$, where $\pi([p+1,q-1])$ is the restriction of $\pi$ to the interval $[p+1,q-1]$ and $\sigma([p+1,q])$ is the restriction of $\sigma$ to the interval $[p+1,q]$.
\end{lemma}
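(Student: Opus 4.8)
The plan is to pass to the interleaved picture of the $\hookrightarrow$ operation and to obtain the claimed equality by proving the two inequalities $\sigma([p+1,q])\leq OK(\pi([p+1,q-1]))$ and $OK(\pi([p+1,q-1]))\leq \sigma([p+1,q])$ separately. Throughout I would use the labeling implicit in the definition of $\hookrightarrow$: in $\pi\hookrightarrow\sigma\in P(4k+1)$ the odd position $2i-1$ carries the $i$-th point of $\sigma$ and the even position $2i$ carries the $i$-th point of $\pi$. First I would record the structural facts coming from the hypotheses. Since $\pi\in NC_2(2k)$ and $\{p,q\}$ is a block, the noncrossing condition forces every other block of $\pi$ to lie entirely inside $(p,q)$ or entirely outside $[p,q]$; in particular $q-p-1$ is even, $\pi([p+1,q-1])$ is again a noncrossing pair partition, and $OK(\pi([p+1,q-1]))$ lives on a set of $q-p$ points, matching the $q-p$ points of $\sigma([p+1,q])$.

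The crux observation is that the restriction of $\pi\hookrightarrow\sigma$ to the block of consecutive positions $\{2p+1,2p+2,\ldots,2q-1\}$ is, after the order-preserving relabeling, exactly $\pi([p+1,q-1])\hookrightarrow\sigma([p+1,q])$: the odd positions in this range are the $\sigma$-points $p+1,\ldots,q$, the even positions are the $\pi$-points $p+1,\ldots,q-1$, and they interleave in precisely the required pattern. Because $\sigma=OK(\pi)$ we have $\pi\hookrightarrow\sigma\in NC(4k+1)$, and the restriction of a noncrossing partition to any subset is noncrossing; hence $\pi([p+1,q-1])\hookrightarrow\sigma([p+1,q])\in NC(2(q-p)-1)$. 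By the maximality property defining $OK$, this gives $\sigma([p+1,q])\leq OK(\pi([p+1,q-1]))$, the first inequality.

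For the reverse inequality I would use an enclosure-and-extension argument. The key geometric fact is that the whole interval of positions $\{2p+1,\ldots,2q-1\}$ is enclosed by the block $\{2p,2q\}$ of $\pi\hookrightarrow\sigma$ (the image of the block $\{p,q\}$ of $\pi$). Consequently any block of $\sigma$ meeting $\{p+1,\ldots,q\}$ must be contained in $\{p+1,\ldots,q\}$, since otherwise it would cross $\{2p,2q\}$. This lets me build a genuine partition $\sigma''$ of $[2k+1]$ by keeping all blocks of $\sigma$ disjoint from $\{p+1,\ldots,q\}$ and replacing the rest by the blocks of $OK(\pi([p+1,q-1]))$. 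Restricted to $\{2p+1,\ldots,2q-1\}$, the partition $\pi\hookrightarrow\sigma''$ equals $\pi([p+1,q-1])\hookrightarrow OK(\pi([p+1,q-1]))$, which is noncrossing by definition of $OK$; outside this interval it agrees with $\pi\hookrightarrow\sigma$; and since the interval is enclosed by $\{2p,2q\}$ and nothing inside is joined to anything outside, no crossing can straddle the interval boundary. Hence $\pi\hookrightarrow\sigma''\in NC(4k+1)$, so by maximality of $OK(\pi)=\sigma$ we get $\sigma''\leq\sigma$; restricting this to $\{p+1,\ldots,q\}$ and using monotonicity of restriction under $\leq$ yields $OK(\pi([p+1,q-1]))=\sigma''([p+1,q])\leq\sigma([p+1,q])$. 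Combining the two inequalities gives the claimed equality.

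The step I expect to require the most care is verifying that $\pi\hookrightarrow\sigma''$ admits no crossing straddling the boundary of the enclosed interval; this is exactly where the hypothesis that $\{p,q\}$ is a block (rather than an arbitrary pair of points) enters, through the enclosure property, and it is what legitimizes cutting the problem at the interval $[p+1,q]$.
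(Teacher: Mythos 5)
Your proof is correct and follows essentially the same route as the paper's: the key facts in both are the enclosure property forced by the block $\{p,q\}$ (every block of $\sigma$ meeting $\{p+1,\dots,q\}$ is contained in it) and the maximality property defining $OK$. The paper compresses this into a diagram plus the bare assertion that maximality passes to the restriction, whereas your two-inequality argument --- restriction of a noncrossing partition is noncrossing for one direction, and the glued partition $\sigma''$ for the other --- is simply the rigorous filling-in of what the paper's ``one can easily see'' leaves to the reader.
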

\begin{proof}
Because that  $p\sim_\pi q$, by the definition of $OK$,  all blocks $W$ of $\sigma$ are either completely contained in the interval $[p+1,q]$ or they are disjoint.  
Since $\sigma$ is the largest partition that
 $\pi\hookrightarrow \sigma$ is noncrossing, $\sigma([p+1,q])$ is the largest partition that
 $\pi([p+1,q-1])\hookrightarrow \sigma([p+1,q])$ is noncrossing. One can easily see from the following diagram. 

\begin{center}
\begin{tikzpicture}[scale=1]

\draw [thick] (0,0)-- (5,0);
\draw [thick] (0,0)-- (0,-1);
\draw [thick] (5,0)-- (5,-1);

\put (-7, -7) {$\cdots$};
\put (26, -14) {$\cdots$};
\put (53, -7) {$\cdots$};

\put (-1, -14) {$\bar p$};
\put (6, -14) {$(p+1)$};
\put (39, -14) {$ q$};
\put (49, -14) {$\bar q$};
\end{tikzpicture}
\end{center} 
$\ $
\end{proof}
From the above diagram,  we have the following result.

\begin{lemma} \label{anti-oriented paired}\normalfont Let  $\pi\in NC_2(2k)$ be a noncrossing pair partition and $\sigma=OK(\pi)$.  Given $p<q$,  if $p\sim\pi q$, then $p\sim_\sigma q+1$ and $p+1\sim_\sigma q$. 
\end{lemma}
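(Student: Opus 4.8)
The plan is to read both relations off the interleaved diagram that defines $OK$, using Lemma \ref{inside OK} for the ``inner'' pair and the defining maximality of $OK$ for the ``outer'' pair. Recall that in $\pi\hookrightarrow\sigma\in NC(2m+1)$ with $m=2k$, the point $j$ of $\sigma$ occupies absolute position $2j-1$ and the point $j$ of $\pi$ (the barred node $\bar j$) occupies absolute position $2j$. Thus the nodes relevant to a block $\{p,q\}$ of $\pi$ occur, left to right, as $\sigma$-$p$ (position $2p-1$), $\bar p$ (position $2p$), then the entire interior, then $\bar q$ (position $2q$), $\sigma$-$(q+1)$ (position $2q+1$); here $\sigma$-$(p+1)$ and $\sigma$-$q$ are the innermost $\sigma$-nodes, sitting just inside the arc $\bar p\!-\!\bar q$, while $\sigma$-$p$ and $\sigma$-$(q+1)$ are the two $\sigma$-nodes flanking that arc from outside.

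First I would establish $p+1\sim_\sigma q$. Since $\{p,q\}$ is a block of the noncrossing pair partition $\pi$, the number of indices strictly between $p$ and $q$ is even, and $\pi([p+1,q-1])$ is itself a noncrossing pair partition. Lemma \ref{inside OK} then gives $\sigma([p+1,q])=OK(\pi([p+1,q-1]))$. By the (unlabeled) lemma asserting that $OK$ is a bijection of $NC(\cdot)$ onto $CNC(\cdot)$, the partition $\sigma([p+1,q])$ is closed, i.e.\ its first and last points $p+1$ and $q$ lie in a common block. Hence $p+1\sim_\sigma q$, the inner relation.

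For the outer relation $p\sim_\sigma q+1$ I would argue directly from the diagram. Because $\pi$ is noncrossing and $\{p,q\}$ is a block, every index strictly between $p$ and $q$ is paired with another index strictly between $p$ and $q$; in the interleaved picture this says that each $\pi$-node among $\bar p,\dots,\bar q$ is matched within the positions $\{2p,\dots,2q\}$ (the nodes $\bar{p+1},\dots,\bar{q-1}$ among themselves and $\bar p$ with $\bar q$). Consequently no arc of $\pi$ has exactly one endpoint in the open position-interval $(2p-1,\,2q+1)$, so the prospective chord joining $\sigma$-$p$ (position $2p-1$) to $\sigma$-$(q+1)$ (position $2q+1$) crosses no arc of $\pi$. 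Since $\sigma=OK(\pi)$ is the \emph{largest} partition keeping $\pi\hookrightarrow\sigma$ noncrossing, joining these two outer-flanking nodes is compatible with maximality, and therefore $p\sim_\sigma q+1$.

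The routine steps (parity of $q-p$, the position bookkeeping) are immediate; the one genuinely delicate point is the last one, namely upgrading ``the chord $p\!-\!(q+1)$ crosses no $\pi$-arc'' to ``$p$ and $q+1$ actually lie in the same block of the maximal $\sigma$.'' To make this airtight I would set $\sigma'=\sigma\vee\{\{p,q+1\}\}$ and verify that $\pi\hookrightarrow\sigma'$ is still noncrossing, i.e.\ that merging the $\sigma$-blocks of $p$ and $q+1$ creates no crossing; here I can use that these nodes are adjacent in the complement of the $\pi$-arcs together with the closedness $1\sim_\sigma(2m+1)$, which lets the exterior region be handled symmetrically to the interior treated by Lemma \ref{inside OK}. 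Maximality of $OK$ then forces $\sigma'=\sigma$, yielding $p\sim_\sigma q+1$ and completing the proof.
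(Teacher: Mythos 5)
Your proof is correct and takes essentially the same route as the paper, whose entire argument is the remark ``from the above diagram'' following Lemma \ref{inside OK}: you read the inner relation $p+1\sim_\sigma q$ off Lemma \ref{inside OK} together with closedness of partitions in the image of $OK$ (the bijection from $NC(m)$ onto $CNC(m+1)$), and the outer relation $p\sim_\sigma q+1$ off the maximality in the definition of $OK$. Your extra care on the outer relation --- checking that merging the $\sigma$-blocks of $p$ and $q+1$ keeps $\pi\hookrightarrow\sigma$ noncrossing, which holds because every $\sigma$-block or $\pi$-arc meeting the open span between positions $2p-1$ and $2q+1$ is entirely contained in it --- is exactly the verification the paper's one-line appeal to the diagram leaves implicit, and your sketch of it is sound.
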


 \section{proof of the main theorem}
 In this section, we are going to prove Theorem \ref{main}. 
 Before that,  we show some necessary facts that we can reduce the proof  into  a simpler cases.
 
Recall that, by  Proposition \ref{Existence of limit distribution},  there exists a family of random variables $\{D_t|t\in T\}$ from a $\range$-valued probability space, whose joint distribution is the weak-norm-limit of the distributions of $(\{D(t,n)|t\in T\})_{n\in N}$.  By Voiculescu's algebraic construction  \cite{V3} of free product with amalgamation over $\range$,  there exists a probability space $(\Domain,\EE:\Domain\rightarrow \range)$ which contains $\{D_t|t\in T\}$ and  a family of freely independent semicircular random variables $\{Y_s|s\in S\}$ such that $\EE(Y_sbY_s)=\eta_s(b)$ for all $b\in \range$, $s\in S$. Furthermore,  $\{D_t|t\in T\}$ and  $\{Y_s|s\in S\}$ are freely independent in $(\Domain,\EE:\Domain\rightarrow \range)$.

In the rest of this paper, $o(1)$ means a quantity that $\lim\limits_{n\rightarrow\infty}o(1)=0.$ 
Therefore,  to prove Theorem \ref{main}, we need to show that 
\begin{equation}\label{limit distribution}
\E_n\left[ P(\{D(t,n)|t\in T\},\{Y(s,n)|s\in S\})\right]-\EE\left[ P(\{D_t|t\in T\},\{Y_s|s\in S\})\right]=o(1).
\end{equation}
for all $\range$-valued polynomial $P\in \range\langle X_i, X_i^* |i\in S \coprod T\rangle$.  Let $\overline{\range}= \range\langle X_t, X_t^* |t\in  T\rangle$. Then we have 
$$  \range\langle X_i, X_i^* |i\in S \coprod T\rangle=\overline{\range} \langle X_s, X_s^* |s\in S\rangle.$$ 
Notice that the monomials of $\overline{\range} \langle X_s, X_s^* |s\in S\rangle$ are in the form 
$$ P_1X_{s_1}P_2X_{s_2}\cdots P_mX_{s_m}P_{m+1},$$
where $P_1,\cdots,P_{m+1}\in \range\langle X_t, X_t^* |t\in  T\rangle$ and $s_1,\cdots,s_m\in S$.
Therefore, to verify the Equation \ref{limit distribution},  we just need to check the following mixed moments.
 \begin{equation}\label{simplifying 1}
\begin{aligned}
&\E[P_1(\{D(t,n)|t\in T\})Y(s_1,n)\cdots Y(s_m,n)P_{m+1}(\{D(t,n)|t\in T\})]\\
=&\EE[P_1(\{D_t|t\in T\})Y_{s_1}\cdots Y_{s_m}P_{m+1}(\{D_t|t\in T\}] +o(1)
\end{aligned}
\end{equation}
for all $m\geq 1$, $P_1,\cdots, P_{m+1}\in\overline{\range}= \range\langle X_t, X_t^* |t\in  T\rangle$, $s_1,\cdots, s_m\in S$.

On the other hand, given a family of $\range$-polynomials $\{P_i|i\in I\}\subset \range\langle X_t,X_t^*|t\in T\rangle$, since $D(t,n)$ are diagonal elements of  $  M_n(\range)$,   $\{P_i( \{D(t,n)|t\in T\}) \}$  are also diagonal elements of  $  M_n(\range)$. Furthermore,  because  the sequence of families of random variables $ (\{D(t,n)|t\in T\})_{n\in \N}$ satisfy conditions D1), D2), D3) of Theorem \ref{main}, the sequence of families of random variables $(\{P_i( \{D(t,n)|t\in T\}) \})_{n\in \N}$  also satisfy the conditions D1), D2), D3) of Theorem \ref{main}. Namely,
\begin{itemize}
\item[D1)] The joint distribution of $\{P_i( \{D(t,n)|t\in T\})$ converges weakly in norm, because the compositions of $\range$-valued noncommutative polynomials are again $\range$-valued noncommutative polynomials.
\item[D2)] $$\frac{\|P_i( \{D(t,n)|t\in T\})\|^k}{n}=o(1)$$ for all $i\in I$, $k\in\N$, the $\range$-valued noncommutative polynomials are linear combinations of finite many $\range$-valued monomials .
\item[D3)] By using H\"older inequality, we have $$ \limsup\frac{\sum\limits_{j=1}^n\|P_i( \{b(j;t,n)|t\in T\})\|^k}{n}< \infty,$$ for all $i\in I$, $k\in\N$.
\end{itemize}

Therefore,  to verify the Equation \ref{simplifying 1}, we just need to show that 
\begin{equation}\label{simplifying 1}
\E[D(t_1,n)Y(s_1,n)\cdots Y(s_m,n)D(t_{m+1},n)] =\EE[D_{t_1}Y_{s_1}\cdots Y{s_m}D_{t_{m+1}}]+o(1)
\end{equation}
for all $t_1,\cdots,t_{m+1}\in T$ and $s_1,\cdots, s_m\in S$ under the conditions  of Theorem \ref{main}.

Now,  let us study the first term of the right side of Equation \ref{simplifying 1}.  According the moment-cumulant formula,  we have 
$$\EE[D_{t_1}Y_{s_1}\cdots D_{t_m}Y_{s_m}D_{t_{m+1}}]=\sum\limits_{\pi\in NC_2(m)}\sum\limits_{\substack{\sigma\in NC(m+1)\\ \pi\hookrightarrow \sigma\in NC(2m+1)}}\kappa_\EE^{(\pi\hookrightarrow \sigma)}(D_{t_1},Y_{s_1},\cdots ,D_{t_m},Y_{s_m},D_{t_{m+1}}).$$

\begin{definition}\normalfont  Given $\pi\in NC_2(m)$, we denote by $Out(\pi)$ be the set of all out blocks of $\pi$. Given $\pi_1,\pi_2\in NC_2(m)$,  we say that $\pi_1\stackrel{out}{\sim} \pi_2$ if $Out(\pi_1)=Out(\pi_2)$.
\end{definition}
\begin{notation}\normalfont  We denote by  $\bar{s}=(s_1,\cdots, s_m)\subset S^m$, $ \bar{t}=(t_1,\cdots, t_{m+1}) \subset T$ and $\ii=(i_1,\cdots,i_{m+1}) \subset [n]$.
\end{notation}

\begin{lemma}\label{recursive relation of limite distribution} \normalfont 
Let $\pi'\in NC_2(m)$ such that $\pi'\leq \ker \bar s$ and 
let $\{V_1=\{j_1,j_1'\},\cdots,V_l=\{j_l,j_l'\}\}$ be the family of outer blocks of $\pi'$. 
Suppose that $V_1,\cdots, V_l$ are ordered, i.e.  for $k=1,\cdots, l-1$, $j'_k+1=j_{k+1}$, then we have
\begin{align*}
&\EE[D_{t_1}E_{V_1}D_{t_{j'_1}} E_{V_2}\cdots E_{V_l}D_{t_{m+1}}]\\
=&\sum\limits_{\substack{\pi\in NC(m)\\ \pi\stackrel{out}{\sim} \pi'}}\sum\limits_{\substack{\sigma\in NC(m+1)\\ \pi\hookrightarrow \sigma\in NC(2m+1)}}\kappa_\EE^{(\pi\hookrightarrow \sigma)}(D_{t_1},Y_{s_1},\cdots ,D_{t_m},Y_{s_m},D_{t_{m+1}}),
\end{align*}
where  $E_{V_k}=\eta_{s_{j_k}}(\EE[D_{I_{j_k+1}}Y_{s_{j_k+1}}\cdots Y_{s_{j_k'-1}} D_{j_k'-1}])$ for $k=1,\cdots,l$.
\end{lemma}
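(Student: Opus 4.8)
The plan is to start from the cumulant expansion recorded immediately before the statement,
\[
\EE[D_{t_1}Y_{s_1}\cdots Y_{s_m}D_{t_{m+1}}]=\sum_{\pi\in NC_2(m),\ \pi\leq\ker\bar s}\ \sum_{\substack{\sigma\in NC(m+1)\\ \pi\hookrightarrow\sigma\in NC(2m+1)}}\kappa_\EE^{(\pi\hookrightarrow\sigma)}(D_{t_1},Y_{s_1},\ldots,D_{t_{m+1}}),
\]
and to show that the subsum over all $\pi$ sharing the outer blocks $V_1,\ldots,V_l$ of $\pi'$ collapses to the nested expectation on the left-hand side. Two facts drive the argument. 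First, since $\{Y_s\}_{s}$ and $\{D_t\}_t$ are freely independent over $\range$ in $(\Domain,\EE)$, every block of $\pi\hookrightarrow\sigma$ that mixes an even ($Y$) position with an odd ($D$) position is a mixed cumulant and vanishes; this is exactly why only the partitions $\pi\hookrightarrow\sigma$ survive, with $\pi$ governing the $Y$'s and $\sigma$ the $D$'s, and why $\pi$ must be a pairing refined by $\ker\bar s$, so that the sum really ranges over noncrossing pair partitions with the same outer blocks as $\pi'$. Second, $\kappa_\EE^{(\tau)}$ is defined recursively by contracting interval blocks, hence is multiplicative along the nesting of $\tau$; combined with the semicircular identity $\kappa_\EE^{(2)}(Y_{s}c,Y_{s})=\eta_{s}(c)$ (and $s_{j_k}=s_{j_k'}$, forced by $\pi'\leq\ker\bar s$), this lets an outer $Y$-pair be evaluated by $\eta$ once its interior has been processed.

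First I would fix the outer blocks $V_1=\{j_1,j_1'\},\ldots,V_l=\{j_l,j_l'\}$ and parametrise the $\pi$'s in the group: such a $\pi$ is obtained by choosing, inside each interval $(j_k,j_k')$, an arbitrary noncrossing pairing $\pi_k=\pi([j_k+1,j_k'-1])$ refined by $\ker\bar s$, the blocks $V_k$ themselves being forced to remain outer and to exhaust the top level. The heart of the decomposition is then to show that $\sigma$ splits accordingly. Because $\pi\hookrightarrow\sigma\in NC(2m+1)$, the arc of each outer block $V_k$ cannot be crossed, so the $D$'s lying strictly inside it are connected by $\sigma$ only among themselves; this is precisely the content of Lemma \ref{inside OK} and Lemma \ref{anti-oriented paired}, which identify the restriction of $OK(\pi)$ to such an interval with the outer Kreweras complement of the inner pairing. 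Consequently $\sigma$ decomposes as a noncrossing partition $\sigma_{\mathrm{top}}$ of the $l+1$ outer $D$'s together with independent inner partitions $\sigma_k\leq OK(\pi_k)$, and every such choice is admissible, since an arc of $\sigma_{\mathrm{top}}$ joining two outer $D$'s merely encloses the intervening $V$-arcs rather than crossing them. Thus the summation range for $(\pi,\sigma)$ in the group factors as a product of the top-level range for $\sigma_{\mathrm{top}}$ and the inner ranges for the $(\pi_k,\sigma_k)$.

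With the ranges decoupled, I would carry out the two resummations. Using the recursive, interval-by-interval definition of $\kappa_\EE^{(\pi\hookrightarrow\sigma)}$, the interior of each $V_k$ is contracted first; summing the inner cumulants over all admissible $(\pi_k,\sigma_k)$ reassembles, by the same moment--cumulant formula applied to the strictly shorter word, the inner moment $\EE[D_{t_{j_k+1}}Y_{s_{j_k+1}}\cdots Y_{s_{j_k'-1}}D_{t_{j_k'}}]$. Since $V_k$ has then become an interval block carrying the two endpoints $Y_{s_{j_k}},Y_{s_{j_k'}}$ with this inner moment $c$ between them, the semicircular contraction $\kappa_\EE^{(2)}(Y_{s_{j_k}}c,Y_{s_{j_k}})=\eta_{s_{j_k}}(c)$ turns the $k$-th factor into exactly $E_{V_k}$. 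What remains is a top-level partition $\sigma_{\mathrm{top}}$ acting on the outer $D$'s with the scalars $E_{V_k}\in\range$ interleaved as coefficients; summing $\kappa_\EE^{(\sigma_{\mathrm{top}})}$ over all $\sigma_{\mathrm{top}}\in NC(l+1)$ is the moment--cumulant formula once more and yields $\EE[D_{t_1}E_{V_1}\cdots E_{V_l}D_{t_{m+1}}]$, which is the asserted left-hand side.

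The \emph{main obstacle} is the factorization in the middle step: one must verify rigorously that $\kappa_\EE^{(\pi\hookrightarrow\sigma)}$ genuinely splits as a top-level cumulant whose $k$-th entry is the fully contracted interior of $V_k$, and that the single constraint $\pi\hookrightarrow\sigma\in NC(2m+1)$ decouples cleanly into the independent constraints $\pi_k\hookrightarrow\sigma_k\in NC$ and $\sigma_{\mathrm{top}}\in NC(l+1)$. This is exactly where the combinatorial lemmas on $OK$ and its restriction to $\pi$-intervals are indispensable, and where care is needed at the edge case of an interval block $V_k=\{j_k,j_k+1\}$, whose interior is the single $D_{t_{j_k+1}}$ and for which $E_{V_k}=\eta_{s_{j_k}}(\EE[D_{t_{j_k+1}}])$. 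A convenient way to make the nesting argument airtight is to induct on $m$, peeling off one outer block and invoking the identity for the shorter inner word; the bookkeeping of which $D$-indices are inner and which are outer is routine but must be tracked carefully to match the indices appearing in the $E_{V_k}$ and in the outer word.
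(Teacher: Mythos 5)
Your proposal is correct and follows essentially the same route as the paper: the paper's proof likewise factorizes the $(\pi,\sigma)$-sum by observing that blocks of $\sigma$ lie either in the outer index set $\{1,j_2,\cdots,j_l,m+1\}$ or inside one interval $[j_k+1,j_k']$, contracts each outer pair via $\kappa_\EE^{(2)}(Y_s,bY_s)=\eta_s(b)$ using the recursive definition of $\kappa_\EE^{(\pi\hookrightarrow\sigma)}$, and then applies the moment--cumulant formula once to each interior interval and once to the top level. The only cosmetic difference is that the paper justifies the splitting of $\sigma$ directly by a diagram rather than by citing Lemma \ref{inside OK} or inducting on $m$.
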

\begin{proof}

Given a $\pi\in NC_2(m)$, then  $\pi\stackrel{out}{\sim} \pi'$ if and only if for all $k=1,\cdots,l $ the restriction of $\pi$ to the interval $[j_k+1,j_k'-1]$ is a noncrossing pair partition and $\{ j_k, j_k' \}$ is a block of $\pi$. Since, $V_1,\cdots, V_l$ are ordered and are all blocks of $\pi$, we have that  $j_1=1$ and $j_l'=m$.\\

\begin{center}
\begin{tikzpicture}[scale=1]

\draw [thick] (0,0)-- (1,0);
\draw [thick] (2,0)-- (3,0);
\draw [thick] (4,0)-- (5,0);
\draw [thick] (0,0)-- (0,-1);
\draw [thick] (1,0)-- (1,-1);
\draw [thick] (2,0)-- (2,-1);
\draw [thick] (3,0)-- (3,-1);
\draw [thick] (4,0)-- (4,-1);
\draw [thick] (5,0)-- (5,-1);

\put (3, -7) {$\cdots$};

\put (23, -7) {$\cdots$};
\put (33, -7) {$\cdots$};
\put (43, -7) {$\cdots$};

\put (-6, -14) {$ 1$};
\put (14, -14) {$ j_2$};
\put (54, -14) {$ m+1$};
\put (-1, -14) {$\bar 1$};
\put (9, -14) {$\bar  j'_1$};
\put (19, -14) {$\bar j_2$};
\put (29, -14) {$\bar j'_2$};
\put (39, -14) {$\bar j_l$};
\put (49, -14) {$\bar m$};

\end{tikzpicture}
\end{center} 
$\\ $
From the above diagram, we see that if $\sigma\in NC(m+1)$ and $\pi\hookrightarrow \sigma\in NC(2m+1)$ then   the blocks of $\sigma$  are either a subset of $\{1,j_2,\cdots, j_l,m+1\}$ or a subset of $[j_k+1,j_k']$ for some $k$.  
Let us denote by $\bar \sigma$ the restriction  of $\sigma$ to $\{1,j_2,\cdots, j_l,m+1\}$. 
Then $\bar\sigma$ can  run over all $NC(1,j_2,\cdots, j_l,m+1)$, which is independent of the choice the restrictions of $\sigma$ to $[j_k+1,j_k']$ for all $k$. 
Let $\sigma_k$ be the restriction $\sigma$ to $[j_k+1,j_k']$. Then, we have the following equation for the sigma notation.
 $$\sum\limits_{\substack{\pi\in NC(m)\\ \pi\stackrel{out}{\sim} \pi'}}\sum\limits_{\substack{\sigma\in NC(m+1)\\ \pi\hookrightarrow \sigma\in NC(2m+1)}}
=\sum\limits_{\bar \sigma_1\in NC( 1,j_2,\cdots, j_l,m+1)}\sum\limits_{\substack{\sigma_1\in NC([j_1+1,j_1'])\\\pi_1\in NC_2([j_1+1,j'_1-1]) \\ \pi_1\hookrightarrow \sigma_1\in NC([2j_1+1,2j'_1-1])}}
\cdots \sum\limits_{\substack{\sigma_l\in NC([j_l+1,j_l'])\\\pi_l\in NC_2([j_l+1,j'_l-1]) \\ \pi_l\hookrightarrow \sigma_l\in NC([2j_l+1,2j'_l-1])}}.$$
It means that the following conditions are equivalent:
\begin{itemize}
\item[1.] $\pi$ is a noncrossing pair partition such that $ \pi\stackrel{out}{\sim} \pi'$ and $\sigma\in NC(m+1)$ is noncrossing partition such that $ \pi\hookrightarrow \sigma\in NC(2m+1)]$.
\item[2.] For each $k$, the restriction $\pi_k$ of $\pi$ to $[j_k+1,j_k']$ is a noncrossing partition and the restriction $\sigma_k$ of $\sigma$ to $[j_k+1,j_k']$ is a noncrossing partition such that $\pi_k\hookrightarrow \sigma_k$ is noncrossing.  In addition, $\bar\sigma$ is noncrossing.
\end{itemize}

Notice that $\kappa_\EE^{(2)}(Y_s,bY_s)=\EE[Y_sbY_s]=\eta_s(b)$ and $V_1,\cdots,V_l$ are blocks of $\pi$, by the recursive definition of $\kappa_\EE^{\pi}$, we have that
\begin{align*}
&\kappa_\EE^{(\pi\hookrightarrow \sigma)}(D_{t_1},Y_{s_1},\cdots ,D_{t_m},Y_{s_m},D_{t_{m+1}})\\
=&\kappa_\EE^{\sigma_1}(D_{t_1},\eta_{s_1}(\kappa_\EE^{(\pi_1\hookrightarrow \sigma_1)}(D_{2},\dots,D_{t_{j_1'}}))D_{j_2+1},\cdots, \eta_{s_{j_l}}(\kappa_\EE^{(\pi_l\hookrightarrow \sigma_l)}(D_{t_{j_l+1}},\dots,D_{t_{j_l'}}))D_{m+1}).
\end{align*}

For each $k$, we have 
$$ \sum\limits_{\substack{\sigma_k\in NC([j_l+1,j_l'])\\\pi_k\in NC_2([j_k+1,j'_k-1]) \\ \pi_l\hookrightarrow \sigma_l\in NC([2j_k+1,2j'_k-1])}}\kappa_\EE^{(\pi_k\hookrightarrow \sigma_k)}(D_{t_{j_k+1}},Y_{s_{j_k+1}}\cdots,D_{t_{j_k'}})=\EE[D_{t_{j_k+1}}Y_{s_{j_k+1}}\cdots D_{t_{j_k'}}].$$

It follows that 
\begin{align*}
&\sum\limits_{\substack{\pi\in NC(m)\\ \pi\stackrel{out}{\sim} \pi'}}\sum\limits_{\substack{\sigma\in NC(m+1)\\ \pi\hookrightarrow \sigma\in NC(2m+1)}}\kappa_\EE^{(\pi\hookrightarrow \sigma)}(D_{t_1},Y_{s_1},\cdots ,D_{t_m},Y_{s_m},D_{t_{m+1}})\\
=&\sum\limits_{\bar \sigma_1\in NC( 1,j_2,\cdots, j_l,m+1)}\kappa_\EE^{(\bar\sigma)}(D_{t_1},E_{V_1}D_{t_{j'_1}},\cdots ,E_{V_l}D_{t_{m+1}})\\
=&\EE[D_{t_1}E_{V_1}D_{t_{j'_1}} E_{V_2}\cdots E_{V_l}D_{t_{m+1}}].
\end{align*}
\end{proof}

Now, let us turn study the left hand side of Equation \ref{simplifying 1}, namely,
$$\E[D(t_1,n)Y(s_1,n)D(t_2,n)\cdots D(t_m,n)Y(s_m,n)D(t_{m+1},n)]. $$

The $(i,i)$th  entry is 
$$\sum\limits_{\substack{(i_1,\cdots,i_{m+1})\subset [n]\\i_1=i_{m+1}=i}} b(i_1;t_1,n)a(i_1,i_2;s_1,n) b(i_2; t_2,n)\cdots b(i_{m};t_{m},n) a(i_{m},i_{m+1};s_m,n)b(i_{m+1};t_{m+1},n).$$

\begin{notation}
 We denote by $Pro_n(\bar{s},\bar{t},\ii)$  the product
$$b(i_1;t_1,n)a(i_1,i_2;s_1,n) b(i_2; t_2,n)\cdots b(i_{m};t_{m},n) a(i_{m},i_{m+1};s_m,n)b(i_{m+1};t_{m+1},n). $$
\end{notation}

Therefore, we have 
\begin{equation}\label{simple equation 1}
\begin{aligned}
&\E_n[D(t_1,n)Y(s_1,n)D(t_2,n)Y(s_2,n)D(t_3,n)\cdots Y(s_m,n)D(t_{m+1},n)]\\
=&\frac{1}{n}\sum\limits_{\substack{\ii\subset[n]\\i_1=i_{m+1}}} \E[Pro_n(\bar{s},\bar{t},\ii) ]\\
=&\frac{1}{n}\sum\limits_{\sigma\in CP(m+1)}\sum\limits_{\substack{\ii\subset [n]\\ \ker\ii=\sigma} }\E[Pro_n(\bar{s},\bar{t},\ii)].
\end{aligned}
\end{equation}

If $\ker \ii=\sigma$ and $d(\sigma)$ has a block $\{p\}$ for some $p$,  by Lemma \ref{single entry}, then there is no $q\neq p$ such that $\{i_p,i_{p+1}\}=\{i_q,i_{q+1}\}$. It follows that $a(i_p,i_{p+1};s_p,n)$ is conditionally independent from all $a(i_q,i_{q+1};s_p,n)$ for $q\neq p$.  Since $\E[a(i_p,i_{p+1};s_p,n)]=0$, we would have $\E[Pro_n(\bar{s},\bar{t},\ii)]=0.$  Therefore, $\E[Pro_n(\bar{s},\bar{t},\ii)]$ is vanishing unless $\ker \ii$ has property P. By eliminating some vanishing mixed moments, we have

\begin{equation}\label{simple equation 2}
\begin{aligned}
&\E_n[D(t_1,n)Y(s_1,n)D(t_2,n)Y(s_2,n)D(t_3,n)\cdots Y(s_m,n)D(t_{m+1},n)]\\
=&\frac{1}{n}\sum\limits_{\substack{\sigma\in CP(m+1)\\ \sigma \text{has property P} }}\sum\limits_{\substack{\ii\subset [n]\\ \ker\ii=\sigma} }\E[Pro_n(\bar{s},\bar{t},\ii) ].\\
\end{aligned}
\end{equation}

\begin{notation}\normalfont Given a partition $\sigma=\{W_1,\cdots,W_l\}\in P(m+1)$,  $\ii(W_p)=k$ means  $ i_j=k$ if $j\in W_p$.

\end{notation}

\begin{lemma}\label{infinitesimal 1} If the number $|\sigma|$ of blocks of $\sigma$ is less than $\frac{m}{2}+1$, then
  $$\frac{1}{n}\sum\limits_{\substack{\ii\subset [n]\\ \ker\ii=\sigma} }\E[Pro_n(\bar{s},\bar{t},\ii)]=o(1).$$
\end{lemma}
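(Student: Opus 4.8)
The plan is to bound the norm of each summand $\E[Pro_n(\bar{s},\bar{t},\ii)]$ using the moment estimate Y4), and then to count both how many summands there are and how large the accompanying products of the $\|b\|$'s can be, using D3). Since $\ker\ii=\sigma$ forces the indices to take exactly $|\sigma|$ distinct values, the number of summands is of order $n^{|\sigma|}$, while Y4) contributes a factor $n^{-m/2}$ and the overall $\tfrac1n$ contributes $n^{-1}$; the hypothesis $|\sigma|<\tfrac{m}{2}+1$ then makes the resulting power of $n$ strictly negative.

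First I would apply Y4) to the inner product $a(i_1,i_2;s_1,n)b(i_2;t_2,n)a(i_2,i_3;s_2,n)\cdots b(i_m;t_m,n)a(i_m,i_{m+1};s_m,n)$, which has exactly $m$ factors of type $a$ and $m-1$ interspersed factors of type $b$, obtaining the bound $M_m n^{-m/2}\prod_{k=2}^{m}\|b(i_k;t_k,n)\|$. Multiplying on the outside by the remaining factors $b(i_1;t_1,n)$ and $b(i_{m+1};t_{m+1},n)$ and using submultiplicativity of the norm yields $\|Pro_n(\bar{s},\bar{t},\ii)\|\le M_m n^{-m/2}\prod_{k=1}^{m+1}\|b(i_k;t_k,n)\|$. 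Since the positive conditional expectation $\E$ is contractive, the same bound holds for $\|\E[Pro_n(\bar{s},\bar{t},\ii)]\|$.

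Next I would organize the sum over $\ii$ with $\ker\ii=\sigma$ according to the common value assigned to each block. Writing the blocks of $\sigma$ as $W_1,\dots,W_{|\sigma|}$ and relaxing the distinctness constraint between distinct block-values (which only enlarges the sum), the total splits as a product over blocks:
$$\sum_{\substack{\ii\subset[n]\\\ker\ii=\sigma}}\prod_{k=1}^{m+1}\|b(i_k;t_k,n)\|\le\prod_{p=1}^{|\sigma|}\Big(\sum_{c\in[n]}\prod_{j\in W_p}\|b(c;t_j,n)\|\Big).$$
For each block $W_p$ I would estimate the inner sum by H\"older's inequality with all exponents equal to $|W_p|$, bounding it by $\prod_{j\in W_p}\big(\sum_{c\in[n]}\|b(c;t_j,n)\|^{|W_p|}\big)^{1/|W_p|}$; by D3) each factor under the root is $O(n)$, so the whole product over $j\in W_p$ is $O(n)$. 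Hence each block contributes a single factor of order $n$, giving a total of order $n^{|\sigma|}$.

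Collecting everything, the norm of the expression in the lemma is at most a constant times $\tfrac1n\cdot n^{-m/2}\cdot n^{|\sigma|}=n^{|\sigma|-1-m/2}$, whose exponent is negative precisely when $|\sigma|<\tfrac{m}{2}+1$; this gives $o(1)$. The only delicate point is the H\"older step: a single index value $c$ may appear with several different labels $t_j$ inside one block, so the correct exponent in each application is the block size $|W_p|$, and matching this to the fixed power $k=|W_p|$ supplied by D3) is what makes the estimate close.
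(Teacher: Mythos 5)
Your proof is correct and follows essentially the same route as the paper's: bound each term via Y4) by $M_m n^{-m/2}\prod_{k=1}^{m+1}\|b(i_k;t_k,n)\|$, drop the distinctness constraint so the sum factors over the blocks of $\sigma$, control each block factor by H\"older's inequality together with D3), and conclude from the exponent count $n^{|\sigma|-1-m/2}$. The only differences are that you spell out details the paper leaves implicit, namely the contractivity of the positive conditional expectation $\E$ and the exact H\"older exponents $|W_p|$ matching the power $k$ in D3).
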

\begin{proof}

By condition Y1) of the main theorem, we have 
$$ \E[Pro_n(\bar{s},\bar{t},\ii)]\leq M_m n^{-m/2}\prod\limits_{k=1}^{m+1}\|b(i_k;t_k,n) \|.$$
Let $\sigma=\{W_1,\cdots,W_l\}$.  Then we have 

\begin{align*}
&\|\frac{1}{n}\sum\limits_{\substack{\ii\in\subset [n]\\ \ker\ii=\sigma} }\E[Pro_n(\bar{s},\bar{t},\ii)]\|\\
\leq&\frac{1}{n}\sum\limits_{\substack{\ii\in\subset [n]\\ \ker\ii=\sigma} }M_m n^{-m/2}\prod\limits_{k=1}^{m+1}\|b(i_k;t_k,n)\| \\
\leq&M_m n^{-m/2-1}\sum\limits_{\substack{\ii\subset [n]\\ \ii(W_1),\cdots,\ii(W_l)\in[n]\\ i_{W_p}\neq i_{W_{q}}\, \text{if}\, p\neq q }}\prod\limits_{k=1}^{m+1}\|b(i_k;t_k,n)\| \\
\leq&M_m n^{-m/2-1}\sum\limits_{\substack{\ii\subset [n]\\ \ii(W_1),\cdots,\ii(W_l)\in[n]}}\prod\limits_{k=1}^{m+1}\|b(i_k;t_k,n)\| \\
=&M_m n^{-m/2-1}\prod\limits_{p=1}^l(\sum_{j=1}^n\prod\limits_{k\in W_p}\| b(j;t_k,n)\|)\\
=&n^{-m/2-1+l} M_m \prod\limits_{p=1}^l(\frac{1}{n}\sum_{j=1}^n\prod\limits_{k\in W_p}\| b(j;t_k,n)\|).\\
\end{align*}

By H\"older's inequality and the condition D3) of the main theorem, we have that 
$$\frac{1}{n}\sum_{j=1}^n\prod\limits_{k\in W_p}\| b(j;t_k,n)\|< \infty.$$

Since $l=|\sigma|< \frac{m}{2}+1$, the proof is done.
\end{proof}

Therefore,  we have 
\begin{align*}
&\E_n[D(t_1,n)Y(s_1,n)D(t_2,n)Y(s_2,n)D(t_3,n)\cdots Y(s_m,n)D(t_{m+1},n)]\\
=&\frac{1}{n}\sum\limits_{\substack{\sigma\in CP(m+1)\\ \sigma \text{has property P}\\ |\sigma|\geq m/2+1 }}\sum\limits_{\substack{\ii\subset [n]\\ \ker\ii=\sigma} }\E[Pro_n(\bar{s},\bar{t},\ii) ]+o(1).\\
\end{align*}

By Lemma \ref{even length}, the $\sigma\in CP(m+1)$ has property P and $|\sigma|\geq m/2+1$ if only if $\sigma=OK(\pi)$ for some noncrossing partition $\pi\in NC_2(m)$.  The previous equation becomes 

\begin{equation}
\begin{aligned}
&\E_n[D(t_1,n)Y(s_1,n)D(t_2,n)Y(s_2,n)D(t_3,n)\cdots Y(s_m,n)D(t_{m+1},n)]\\
=&\frac{1}{n}\sum\limits_{\pi\in NC_2(m)}\sum\limits_{\substack{\ii\subset [n]\\ \ker\ii=OK(\pi)} }\E[Pro_n(\bar{s},\bar{t},\ii) ]+o(1).\\
\end{aligned}
\end{equation}

\begin{lemma}\label{partial sum} Let $m$ be an even number. Then
$$\frac{1}{n}\sum\limits_{\pi\in NC_2(m)}\sum\limits_{\substack{\ii\subset [n]\\ \ker\ii=OK(\pi)} }\E[Pro_n(\bar{s},\bar{t},\ii) ]-\frac{1}{n}\sum\limits_{\pi\in NC_2(m)}\sum\limits_{\substack{\ii\subset [n]\setminus V\\ \ker\ii=OK(\pi)} }\E[Pro_n(\bar{s},\bar{t},\ii) ]=o(1),$$
for all   subset $V$ of $[n]$ such that $|V|<m$.
\end{lemma}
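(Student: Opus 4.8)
The plan is to recognize the difference of the two sums as the contribution of exactly those index tuples $\ii$ that actually meet $V$, and then to show that forcing one block of indices to lie in the small set $V$ costs a full factor of $n$ — the very factor that drives the expression to $0$. First I would rewrite the difference as
$$\frac{1}{n}\sum\limits_{\pi\in NC_2(m)}\sum\limits_{\substack{\ii\subset [n],\ \ker\ii=OK(\pi)\\ \ii\cap V\neq\emptyset}}\E[Pro_n(\bar{s},\bar{t},\ii)],$$
where $\ii\cap V\neq\emptyset$ abbreviates that at least one value $\ii(W_p)$ lies in $V$. Since $m$ is even and $\pi\in NC_2(m)$, Lemma \ref{even length} tells us that $\sigma=OK(\pi)$ has exactly $l=\frac{m}{2}+1$ blocks $W_1,\dots,W_l$, so an admissible $\ii$ is determined by the $l$ distinct values $\ii(W_1),\dots,\ii(W_l)$.

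Next I would bound each summand in norm. Exactly as in the proof of Lemma \ref{infinitesimal 1}, condition Y4) together with the contractivity of the conditional expectation yields
$$\|\E[Pro_n(\bar{s},\bar{t},\ii)]\|\leq M_m\, n^{-m/2}\prod\limits_{k=1}^{m+1}\|b(i_k;t_k,n)\|=M_m\, n^{-m/2}\prod\limits_{p=1}^{l}\prod\limits_{k\in W_p}\|b(\ii(W_p);t_k,n)\|.$$
To estimate the restricted sum I would drop the distinctness constraint and apply a union bound over which block is pinned into $V$: the index set $\{\ii:\ker\ii=OK(\pi),\ \ii\cap V\neq\emptyset\}$ is contained in $\bigcup_{p_0=1}^{l}\{\ii:\ii(W_{p_0})\in V\}$, and for each fixed $p_0$ the corresponding sum of products factors as
$$\Big(\sum\limits_{j\in V}\prod\limits_{k\in W_{p_0}}\|b(j;t_k,n)\|\Big)\prod\limits_{p\neq p_0}\Big(\sum\limits_{j=1}^n\prod\limits_{k\in W_p}\|b(j;t_k,n)\|\Big).$$
The $l-1=\frac{m}{2}$ free factors are each $O(n)$ by H\"older's inequality and condition D3), exactly as in Lemma \ref{infinitesimal 1}, giving $O(n^{m/2})$ in total. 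For the pinned factor I would use that $b(j;t,n)$ is a diagonal entry of $D(t,n)$, so $\|b(j;t,n)\|\leq\|D(t,n)\|$, and that $|V|<m$ is a fixed constant, whence $\sum_{j\in V}\prod_{k\in W_{p_0}}\|b(j;t_k,n)\|\leq |V|\,\max_{t}\|D(t,n)\|^{|W_{p_0}|}$.

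Combining these estimates, the contribution of a fixed $\pi$ and a fixed $p_0$ is bounded by
$$\frac{1}{n}\cdot M_m\, n^{-m/2}\cdot |V|\,\max_{t}\|D(t,n)\|^{|W_{p_0}|}\cdot O(n^{m/2})=O\!\Big(\frac{\max_t\|D(t,n)\|^{|W_{p_0}|}}{n}\Big).$$
Since $|W_{p_0}|\leq m+1$ is bounded and only finitely many $t$ occur, condition D2) forces this quantity to tend to $0$. Summing over the finitely many $p_0$ and over the finite set $NC_2(m)$ preserves $o(1)$, which is the assertion. The step I expect to be the crux is precisely the treatment of the pinned block: its index range $V$ is harmless in size, but its entries are controlled only by the \emph{a priori} unbounded norm $\|D(t,n)\|$, and it is condition D2) — that every fixed power of $\|D(t,n)\|$ is $o(n)$ — which absorbs this growth against the single surplus factor $1/n$ gained from confining one block to $V$.
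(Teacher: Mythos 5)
Your proposal is correct and follows essentially the same route as the paper's own proof: both rewrite the difference as the contribution of tuples with at least one block value pinned in $V$, drop the distinctness constraint and apply a union bound over the pinned block, invoke Y4) for the $M_m n^{-m/2}$ norm bound, factor the sum so that the $\frac{m}{2}$ free factors are $O(n)$ via H\"older and D3), and absorb the pinned factor's $|V|\cdot\|D(t,n)\|^{|W_{p_0}|}$ growth against the surplus $1/n$ using D2). Your identification of D2) as the crux is exactly the mechanism the paper uses.
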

\begin{proof}
Fix $\pi\in NC_2(m)$, let $\sigma=OK(\pi)=\{W_1,\cdots,W_l\}$. Then $l=m/2+1$.  We have 

\begin{align*}
&\frac{1}{n}\|\sum\limits_{\substack{\ii\subset [n]\\ \ker\ii=\sigma} }\E[Pro_n(\bar{s},\bar{t},\ii) ]-\sum\limits_{\substack{\ii\in([n]\setminus V)^{m+1}\\\ \ker\ii=OK(\pi)} }\E[Pro_n(\bar{s},\bar{t},\ii)]\|\\
=&\frac{1}{n}\|\sum\limits_{\substack{ \ii(W_1),\cdots,\ii(W_l)\in[n]\\ i_{W_p}\neq i_{W_{q}}\, \text{if}\, p\neq q }}
\E[Pro_n(\bar{s},\bar{t},\ii) ]-\sum\limits_{\substack{ \ii(W_1),\cdots,\ii(W_l)\in[n]\setminus V\\ i_{W_p}\neq i_{W_{q}}\, \text{if}\, p\neq q }}\E[Pro_n(\bar{s},\bar{t},\ii) ]\|\\
=&\frac{1}{n}\|\sum\limits_{\substack{ \ii(W_1),\cdots,\ii(W_l)\in[n]\setminus V\\ i_{W_p}\neq i_{W_{q}}\, \text{if}\, p\neq q \\ W_p\in V\,\text{for some}\,p} }
\E[Pro_n(\bar{s},\bar{t},\ii)]\|\\
\leq&\frac{1}{n}\sum\limits_{\substack{ \ii(W_1),\cdots,\ii(W_l)\in[n]\setminus V\\ i_{W_p}\neq i_{W_{q}}\, \text{if}\, p\neq q \\ W_p\in V\,\text{for some}\,p} }
\|\E[Pro_n(\bar{s},\bar{t},\ii)]\|.
\end{align*}

Once we remove the restriction that  $i_{W_p}\neq i_{W_{q}}\, \text{if}\, p\neq q$ from the last sum, we will get more non-negative terms.  Therefore, we have 
\begin{align*}
\frac{1}{n}\sum\limits_{\substack{ \ii(W_1),\cdots,\ii(W_l)\in[n]\setminus V\\ i_{W_p}\neq i_{W_{q}}\, \text{if}\, p\neq q \\ W_p\in V\,\text{for some}\,p} }
\|\E[Pro_n(\bar{s},\bar{t},\ii)]\|\leq&\frac{1}{n}\sum\limits_{\substack{ \ii(W_1),\cdots,\ii(W_l)\in[n]\setminus V\\ W_p\in V\,\text{for some}\,p}}
\|\E[Pro_n(\bar{s},\bar{t},\ii)]\|.
\end{align*}
Notice that 
\begin{align*}
&\{( \ii(W_1),\cdots,\ii(W_l))\in[n]\setminus V| W_p\in V\,\text{for some}\,p\}\\
=&\bigcup\limits_{p=1}^l \{( \ii(W_1),\cdots,\ii(W_l))\in[n]| W_p\in V\}.
\end{align*}

Therefore, we have

\begin{align*}
&\frac{1}{n}\sum\limits_{\substack{ \ii(W_1),\cdots,\ii(W_l)\in[n]\setminus V\\ W_p\in V\,\text{for some}\,p}}
\|\E[Pro_n(\bar{s},\bar{t},\ii)]\|\\
\leq& \frac{1}{n}  \sum\limits_{p=1}^l \sum\limits_{\substack{ \ii(p)\in V\\ \ii(W_q)\in[n] \,\text{for}\, q\neq p }}( \|\E[Pro_n(\bar{s},\bar{t},\ii) ]\|)\\
\leq& \frac{1}{n}  \sum\limits_{p=1}^l \sum\limits_{\substack{ \ii(p)\in V\\ \ii(W_q)\in[n] \,\text{for}\, q\neq p }}M_m n^{-m/2}\prod\limits_{k=1}^{m+1}\|b(i_k;t_k,n)\|\\
=&  M_m\sum\limits_{p=1}^l \left\{\left(\frac{1}{n}\sum_{j\in V}\prod\limits_{k\in W_p}\| b(j;t_k,n)\|\right)\left(\prod\limits_{\substack{q=1,\cdots, l\\ q\neq p}}(\frac{1}{n}\sum_{j=1}^n\prod\limits_{k\in W_q}\| b(j;t_k,n)\|)\right)\right\}\\
\leq&  M_m\sum\limits_{p=1}^l \left\{\left(\frac{|V|}{n}\prod\limits_{k\in W_p}\| D(t_k,n)\|\right)\left(\prod\limits_{\substack{q=1,\cdots, l\\ q\neq p}}(\frac{1}{n}\sum_{j=1}^n\prod\limits_{k\in W_q}\| b(j;t_k,n)\|)\right)\right\}.
\end{align*}

By H\"older's inequality and the condition D2) of Theorem \ref{main}, we have  $$\frac{1}{n}\prod\limits_{k\in W_p}\| D(t_k,n)\|=o(1).$$ 
On the other hand, by H\"older's inequality and the condition D3) of Theorem \ref{main}, 
  $$\frac{1}{n}\sum_{j=1}^n\prod\limits_{k\in W_q}\| b(j;t_k,n)\|\leq \infty,$$
  for all $q.$ The proof is done.

\end{proof}

\begin{lemma}\label{recrusive moments }\normalfont
Let $m$ be an even number, and let $\pi\in NC_2(m)$. Suppose  $\ii\subset [n]$ such that $\ker \ii= OK(\pi)$. Then
\begin{align*}
\E[Pro_n(\bar{s},\bar{t},\ii)]=&\E^{(\pi)}[b(i_1;t_1,n)a(i_1,i_2;s_1,n), b(i_2; t_2,n)a(i_2,i_3;s_2,n),\cdots,\\
&b(i_{m};t_{m},n) a(i_{m},i_{m+1};s_m,n)b(i_{m+1};t_{m+1},n)].
\end{align*}
\end{lemma}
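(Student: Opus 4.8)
The plan is to prove the identity by induction on the even number $m$, peeling off one interval block of $\pi$ at each step by means of the conditional independence hypothesis Y5), and checking that each peeling reproduces exactly one step of the recursion defining $\E^{(\pi)}$. Throughout, write $c_k = b(i_k;t_k,n)\,a(i_k,i_{k+1};s_k,n)$ for $1\le k\le m-1$ and $c_m = b(i_m;t_m,n)\,a(i_m,i_{m+1};s_m,n)\,b(i_{m+1};t_{m+1},n)$, so that $Pro_n(\bar s,\bar t,\ii)=c_1c_2\cdots c_m$ and the right-hand side of the lemma is $\E^{(\pi)}(c_1,\ldots,c_m)$. For the base case $m=2$ the only partition is $\pi=\{[2]\}$, so that $\E^{(\pi)}=\E^{(2)}=\E[\,\cdot\,]$ and the claim is just $\E[Pro_n]=\E[c_1c_2]$.

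For the inductive step, since $\pi$ is a noncrossing pair partition it has an interval block $V=\{p,p+1\}$, and by the recursive definition of $\E^{(\pi)}$ it suffices to peel this block. The key point is to identify the matrix position attached to $V$ and show it is isolated. By Lemma \ref{must have single block} the block $V$ corresponds to a singleton $\{p+1\}$ of $\sigma=OK(\pi)=\ker\ii$, so the index value $i_{p+1}$ occurs nowhere else in $\ii$; moreover Lemma \ref{anti-oriented paired} gives $p\sim_\sigma p+2$, i.e. $i_p=i_{p+2}$. Consequently both $a$-factors occurring in $c_pc_{p+1}$ sit at the single unordered position $\{i_p,i_{p+1}\}$, while no position $\{i_q,i_{q+1}\}$ with $q\notin\{p,p+1\}$ can equal $\{i_p,i_{p+1}\}$ (it cannot contain the unique value $i_{p+1}$); this is exactly the conclusion of Lemma \ref{single entry}. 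Hence $c_pc_{p+1}$ lies in the $*$-subalgebra generated by $\range$ and the entries at position $\{i_p,i_{p+1}\}$, whereas $c_1\cdots c_{p-1}$ and $c_{p+2}\cdots c_m$ lie in the $*$-subalgebra generated by $\range$ and the remaining positions; these families are conditionally independent by Y5). Applying the defining relation of conditional independence with $a_1=c_1\cdots c_{p-1}$, $a_2=c_pc_{p+1}$, $a_3=c_{p+2}\cdots c_m$ yields
$$\E[c_1\cdots c_m]=\E[c_1\cdots c_{p-1}\,\E[c_pc_{p+1}]\,c_{p+2}\cdots c_m].$$

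Since $\E[c_pc_{p+1}]\in\range$, the right-hand side is $\E^{(m-2)}$ applied to the $m-2$ letters $c_1,\ldots,c_{p-2},\,c_{p-1}\E^{(2)}(c_p,c_{p+1}),\,c_{p+2},\ldots,c_m$, which by $\range$-multilinearity is precisely the sequence fed to $\E^{(\pi\setminus V)}$ in one step of the recursion. To close the induction I must check that this reduced product again satisfies the hypothesis for $\pi\setminus V\in NC_2(m-2)$: writing $\ii'=(i_1,\ldots,i_p,i_{p+3},\ldots,i_{m+1})$ (deletion of $i_{p+1}$ and $i_{p+2}=i_p$), the surviving $a$-factors lie at the consecutive pairs of $\ii'$, and by Lemma \ref{remove 1} and Lemma \ref{restriction of property P} the restriction $\sigma([m+1]\setminus\{p+1,p+2\})$ is still closed with property P, whence by Lemma \ref{even length} it equals $OK(\pi\setminus V)$; that is, $\ker\ii'=OK(\pi\setminus V)$. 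The induction hypothesis then gives $\E[\text{reduced product}]=\E^{(\pi\setminus V)}(\ldots)=\E^{(\pi)}(c_1,\ldots,c_m)$.

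The conditional-independence peeling itself is conceptually the easy half, once Lemma \ref{single entry} pins the interval block to a unique matrix position. The main obstacle I expect is the bookkeeping of the reduction: verifying that after deleting one block the remaining letters still form a product of the required shape whose kernel is the outer Kreweras complement of the reduced pair partition, while correctly absorbing the stray $\range$-factors (the trailing $b(i_{m+1};t_{m+1},n)$ and the inserted $\E[c_pc_{p+1}]$) and relabeling the indices so that the inductive hypothesis applies verbatim. This invariance is exactly what the combinatorial lemmas of Section 3 are designed to guarantee.
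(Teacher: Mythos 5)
Your proof is correct and takes essentially the same route as the paper's: induction on $m$, peeling one interval pair block of $\pi$ by first locating the corresponding singleton of $\ker\ii$ and the matching of the flanking indices (the paper's use of Lemmas \ref{must have single block} and \ref{anti-oriented paired}), then applying the conditional independence Y5) to pull out the inner expectation and invoking the recursive definition of $\E^{(\pi)}$. If anything, your verification that the reduced tuple satisfies $\ker\ii'=OK(\pi\setminus V)$ is more careful than the paper's proof, which simply cites the inductive assumption at that point.
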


\begin{proof}
The statement is trivial when $m=2$
Suppose that it is true for $m=2k$. 
 For $m=2k+2$, since $\pi\in NC_2(2k+2)$, $\pi$ has an interval $\{p,p+1\}$ for some $p\in [2k+2]$.  
 Therefore,  $\{p\}$ and $\{p-1,p+1 \}$ are blocks of $\ker \ii$.  
By Lemma \ref{anti-oriented paired},  we have $i_{p-1}=i_{p+1}$.  In addition, we have that $i_p\neq i_q$ if $p\neq q$. 
It follows that 
$a(i_{p-1},i_p;s_{p-1},n)b(i_p; t_p,n)a(i_p,i_{p+1};s_p,n)$ is conditionally independent from  $a(i_q,i_{q+1};s_q,n)$ such that $q\neq p-1,p$. Therefore, we have
\begin{align*}
&\E[Pro_n(\bar{s},\bar{t},\ii)]\\
=&\E[b(i_1;t_1,n)a(i_1,i_2;s_1,n)\cdots b(i_{m};t_{m},n) a(i_{m},i_{m+1};s_m,n)b(i_{m+1};t_{m+1},n)]\\
=&\E[b(i_1;t_1,n)a(i_1,i_2;s_1,n)\cdots \E[a(i_{p-1},i_p;s_{p-1},n)b(i_p; t_p,n)a(i_p,i_{p+1};s_p,n)]\\  
&\cdots b(i_{m};t_{m},n) a(i_{m},i_{m+1};s_m,n)b(i_{m+1};t_{m+1},n)].
\end{align*}
By the inductive assumption, the proof is done.
\end{proof}

Since $a(i,j;s_1,n)$ and $a(i,j;s_2,n)$ are conditionally independent whenever $s_1\neq s_2$, $\E[Pro_n(\bar{s},\bar{t},\ii)]=0$ if $\ker \ii=OK(\pi)$ with $\pi\not\leq \bar s$. Therefore, we have 
\begin{equation}\label{simplifying 2}
\sum\limits_{\pi\in NC_2(m)}\sum\limits_{\substack{\ii\subset [n]\\ \ker\ii=OK(\pi)} }\E[Pro_n(\bar{s},\bar{t},\ii) ]=\sum\limits_{\substack{\pi\in NC_2(m)\\ \pi\leq \ker\bar s}}\sum\limits_{\substack{\ii\in\subset [n]\\ \ker\ii=OK(\pi)} }\E[Pro_n(\bar{s},\bar{t},\ii) ].
\end{equation}

 \begin{lemma} \label{mixed moments 3}
$$\sum\limits_{j=1}^n \E[a(i,j;s,n)D(j;t,n)a(i,j;s,n)]=\eta_s(\EE(D_{t}))+o(1),$$
for all $1\leq i\leq n$ and $s\in S$.
\end{lemma}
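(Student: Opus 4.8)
The plan is to reduce the sum termwise to the second-moment data supplied by hypothesis Y3) and then to recognize the resulting $\range$-average as $\E_n[D(t,n)]$, to which condition D1) applies. This is essentially the single-block evaluation that will feed the recursion of Lemma \ref{recrusive moments }, so I expect it to be short.

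First I would observe that, since each matrix entry is self-adjoint by Y1) and $Y(s,n)$ is self-adjoint, one has $a(i,j;s,n)=a(j,i;s,n)$; hence the integrand $a(i,j;s,n)\,b(j;t,n)\,a(i,j;s,n)$ is exactly of the form $a(i,j;s,n)\,\bullet\,a(j,i;s,n)$ with $\bullet=b(j;t,n)\in\range$ (recall $b(j;t,n)$ is the $j$th diagonal entry of $D(t,n)$). Applying Y3) evaluates each term directly as
$$\E[a(i,j;s,n)\,b(j;t,n)\,a(j,i;s,n)]=\tfrac{1}{n}\,\eta_s(b(j;t,n)).$$

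Next I would sum over $j$ and pull the sum inside $\eta_s$ using its $\range$-linearity, which yields
$$\sum_{j=1}^n \E[a(i,j;s,n)\,b(j;t,n)\,a(j,i;s,n)]=\eta_s\Big(\tfrac{1}{n}\sum_{j=1}^n b(j;t,n)\Big)=\eta_s(\E_n[D(t,n)]),$$
where the last equality uses that $\E$ restricts to the identity on $\range$, so $\E_n[D(t,n)]=\tfrac1n\sum_{j=1}^n b(j;t,n)$. Finally I would invoke condition D1): weak-norm convergence of the distribution of $D(t,n)$ gives $\E_n[D(t,n)]\to\EE[D_t]$ in norm, this being the value of the limit distribution on the monomial $X_t$ (cf. Proposition \ref{Existence of limit distribution}).

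The one point that requires care is the first step, namely confirming that the two $a$-factors form precisely the conjugate pair appearing in Y3) so that $\eta_s$ may be applied with argument exactly $b(j;t,n)$; this is where Y1) together with self-adjointness of $Y(s,n)$ is used. The passage to the limit is then only a continuity estimate: since $\eta_s$ is a completely positive map between $C^*$-algebras it is bounded, hence norm-continuous, and
$$\|\eta_s(\E_n[D(t,n)])-\eta_s(\EE[D_t])\|\leq\|\eta_s\|\,\|\E_n[D(t,n)]-\EE[D_t]\|\longrightarrow 0,$$
giving $\eta_s(\E_n[D(t,n)])=\eta_s(\EE[D_t])+o(1)$, which is the claim. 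There is no combinatorial obstacle here; the difficulty, such as it is, lies entirely in correctly matching indices to invoke Y3).
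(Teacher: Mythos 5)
Your proof is correct and takes essentially the same route as the paper's: apply Y3) termwise with $\bullet=b(j;t,n)$, recognize $\frac{1}{n}\sum_{j=1}^n b(j;t,n)$ as $\E_n[D(t,n)]$, and conclude by the norm convergence $\E_n[D(t,n)]\to\EE[D_t]$ from D1). The details you add beyond the paper --- the index matching $a(i,j;s,n)=a(j,i;s,n)$ via Y1) so that the pair fits Y3), and the boundedness of the completely positive map $\eta_s$ to pass to the limit --- are exactly the steps the paper leaves implicit.
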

\begin{proof}
By the condition Y3) of Theorem \ref{main}, we have 
$$\sum\limits_{j=1}^n \E[a(i,j;s,n)D(j;t,n)a(i,j;s,n)]=\eta_s\left(\sum\limits_{j=1}^n\frac{1}{n}\E[D(j;t,n)]\right). $$
Since $$\eta_s\left(\sum\limits_{j=1}^n\frac{1}{n}\E[D(j;t,n)]\right)=\eta_s\left(\E_n[D(t,n)]\right), $$
and $ \E_n[D(t,n)] $ converges to $\EE(D_{t})$ in norm.  The statement follows.
\end{proof}

Now, we are ready to prove Theorem \ref{main}.
\begin{proof}
By Equation \ref{simple equation 1}, Equation \ref{simplifying 2}, Lemma \ref{infinitesimal 1}, it suffices to prove the following equation. 
$$\frac{1}{n}\sum\limits_{\substack{\pi\in NC_2(m)\\ \pi\leq \ker\bar s}}\sum\limits_{\substack{\ii\in\subset [n]\\ \ker\ii=OK(\pi)} }\E[Pro_n(\bar{s},\bar{t},\ii)]=\EE[D_{t_1}Y_{s_1}\cdots D_{t_m}Y_{s_m}D_{t_{m+1}}]+o(1).$$

When $m$ is an odd number, $NC_2(m)$ is empty. The equation is true.

When $m=2$, if $s_1\neq s_2$, then  we also get $0=0+o(1)$.

Suppose that $s_1=s_2=s$.   Since $Y_s$ is a $\range$-valued semicircular random variable, we have 
$$ \EE[D_{t_1}Y_{s}D_{t_2}Y_sD_{t_3}]=\EE[D_{t_1}\eta_s(\EE[D_{t_2}])D_{t_3}].$$

Then, by condition D1) of Theorem \ref{main}, we have 
\begin{align*}
\EE[D_{t_1}\eta_s(\EE[D_{t_2}])D_{t_3}]=\E_n[D(t_1,n)\eta_s(\EE[D_{t_2}])D(t_3,n)]+o(1).
\end{align*}
By Lemma \ref{mixed moments 3}, we have 

\begin{align*}
&\E_n[D(t_1,n)\eta_s(\EE[D_{t_2}])D(t_3,n)]\\
&=\frac{1}{n}\sum\limits_{i=1}^n\E[D(i;t_1,n)\left(\sum\limits_{j=1}^n\E[a(i,j;s,n)D(j;t,n)a(i,j;s,n)]+o(1)\right) D(i;t_3,n)]\\
&=\frac{1}{n}\sum\limits_{j=1}^n\sum\limits_{i=1}^n\E[D(i;t_1,n)\left(\E[a(i,j;s,n)D(j;t,n)a(i,j;s,n)]\right) D(i;t_3,n)]+o(1)\\
&=\frac{1}{n}\sum\limits_{j=1}^n\sum\limits_{i=1}^n\E[D(i;t_1,n)a(i,j;s,n)D(j;t,n)a(i,j;s,n) D(i;t_3,n)]+o(1)\\
&=\frac{1}{n}\sum\limits_{\substack {i,j=1\\i\neq j}}^n\E[D(i;t_1,n)a(i,j;s,n)D(j;t,n)a(i,j;s,n) D(i;t_3,n)]\\
&+\frac{1}{n}\sum\limits_{\substack {i=j=1}}^n\E[D(i;t_1,n)a(i,j;s,n)D(j;t,n)a(i,j;s,n) D(i;t_3,n)]+o(1).\\
\end{align*}

By Lemma \ref{infinitesimal 1}, we have $$\frac{1}{n}\sum\limits_{\substack {i=j=1}}^n\E[D(i;t_1,n)a(i,j;s,n)D(j;t,n)a(i,j;s,n) D(i;t_3,n)]=o(1).$$
Therefore, the statement is true for $m=2$. 

Suppose $m=2k$ for $k>1$.   Let ${\bf Out_2(m)}$ be the family of $\stackrel{out}{\sim}$ equivalence classes of $NC_2(m)$.  
Given a noncrossing pair partition $\pi$, we denote by $[\pi]_{\stackrel{out}{\sim}}$ the family of noncrossing pair partitions which are $\stackrel{out}{\sim}$ equivalent to $\pi$.    Then, we have
\begin{align*}
&\frac{1}{n}\sum\limits_{\substack{\pi\in NC_2(m)\\ \pi\leq \ker\bar s}}\sum\limits_{\substack{\ii\subset [n]\\ \ker\ii=OK(\pi)} }\E[Pro_n(\bar{s},\bar{t},\ii)]\\
=&\frac{1}{n}\sum\limits_{[\pi']_{\stackrel{out}{\sim}}\in {\bf Out_2(m)} }\sum\limits_{\substack{\pi\in[\pi']_{\stackrel{out}{\sim}}\\ \pi\leq \ker\bar s}}\sum\limits_{\substack{\ii\subset [n]\\ \ker\ii=OK(\pi)} }\E[Pro_n(\bar{s},\bar{t},\ii)].
\end{align*}

On the other hand, we have 
\begin{align*}
&\EE[D_{t_1}Y_{s_1}\cdots D_{t_m}Y_{s_m}D_{t_{m+1}}]\\
=&\sum\limits_{\pi\in NC_2(m)}\sum\limits_{\substack{\sigma\in NC(m+1)\\ \pi\hookrightarrow \sigma\in NC(2m+1)}}\kappa_\EE^{(\pi\hookrightarrow \sigma)}(D_{t_1},Y_{s_1},\cdots ,D_{t_m},Y_{s_m},D_{t_{m+1}})\\
=&\sum\limits_{[\pi']_{\stackrel{out}{\sim}}\in {\bf Out_2(m)} }\sum\limits_{\substack{\pi\in[\pi']_{\stackrel{out}{\sim}}}}\sum\limits_{\substack{\sigma\in NC(m+1)\\ \pi\hookrightarrow \sigma\in NC(2m+1)}}\kappa_\EE^{(\pi\hookrightarrow \sigma)}(D_{t_1},Y_{s_1},\cdots ,D_{t_m},Y_{s_m},D_{t_{m+1}}).\\
\end{align*}
Notice that that  $\kappa_\EE^{(\pi\hookrightarrow \sigma)}(D_{t_1},Y_{s_1},\cdots ,D_{t_m},Y_{s_m},D_{t_{m+1}})=0$ if $\pi\not\leq \ker \bar s$, the above equation becomes
\begin{align*}
&\EE[D_{t_1}Y_{s_1}\cdots D_{t_m}Y_{s_m}D_{t_{m+1}}]\\
=&\sum\limits_{[\pi']_{\stackrel{out}{\sim}}\in {\bf Out_2(m)} }\sum\limits_{\substack{\pi\in[\pi']_{\stackrel{out}{\sim}}\\ \pi\leq \ker\bar s}}\sum\limits_{\substack{\sigma\in NC(m+1)\\ \pi\hookrightarrow \sigma\in NC(2m+1)}}\kappa_\EE^{(\pi\hookrightarrow \sigma)}(D_{t_1},Y_{s_1},\cdots ,D_{t_m},Y_{s_m},D_{t_{m+1}}).\\
\end{align*}

Therefore,  we just need to show that the following equation holds.
\begin{align*}
&\frac{1}{n}\sum\limits_{\substack{\pi\in[\pi']_{\stackrel{out}{\sim}}\\ \pi\leq \ker\bar s}}\sum\limits_{\substack{\ii\subset [n]\\ \ker\ii=OK(\pi)} }\E[Pro_n(\bar{s},\bar{t},\ii)]\\
= &\sum\limits_{\substack{\pi\in[\pi']_{\stackrel{out}{\sim}}\\ \pi\leq \ker\bar s}}\sum\limits_{\substack{\sigma\in NC(m+1)\\ \pi\hookrightarrow \sigma\in NC(2m+1)}}\kappa_\EE^{(\pi\hookrightarrow \sigma)}(D_{t_1},Y_{s_1},\cdots ,D_{t_m},Y_{s_m},D_{t_{m+1}})+o(1).
\end{align*}

Suppose that $\{V_1=\{j_1,j_1'\},\cdots,V_l=\{j_l,j_l'\}\}$ is  the family  of outer blocks of $\pi'$ and  $V_1,\cdots, V_l$ are ordered, i.e.  for $k=1,\cdots, l-1$, $j'_k+1=j_{k+1}$.  By Lemma \ref{recursive relation of limite distribution}, we have
\begin{align*}
&\sum\limits_{\substack{\pi\in NC(m)\\ \pi\stackrel{out}{\sim} \pi'}}\sum\limits_{\substack{\sigma\in NC(m+1)\\ \pi\hookrightarrow \sigma\in NC(2m+1)}}\kappa_\EE^{(\pi\hookrightarrow \sigma)}(D_{t_1},Y_{s_1},\cdots ,D_{t_m},Y_{s_m},D_{t_{m+1}})\\
=&\EE[D_{t_1}E_{V_1}D_{t_{j'_1}} E_{V_2}\cdots E_{V_l}D_{t_{m+1}}],
\end{align*}
where  $E_{V_k}=\eta_{s_{j_k}}(E[D_{I_{j_k+1}}Y_{s_{j_k+1}}\cdots Y_{s_{j_k'-1}} D_{j_k'-1}])$ for $k=1,\cdots,l$.

On the other hand, given $\pi\in [\pi']_{\stackrel{out}{\sim}}$, let $\sigma=OK(\pi)$. 
Then, for each $k$, the restriction of $\pi$ to $[j_k+1,j_k'-1]$ is also a noncrossing pair partition.  
By Lemma \ref{inside OK}, $OK(\pi([j_k+1,j_k'-1]))=\sigma([j_k+1,j_k'])$.  
Furthermore, according to the definition of $OK$, $\{1,j_2,\cdots, j_l,m+1\}$ is a block of $\sigma$. 
If $\pi\leq \ker \bar s$, then $\pi([j_k+1,j_k'-1])\leq \ker \bar s([j_k+1,j_k'-1])$ for all $k$, where $\bar s([j_k+1,j_k'-1]$ is the subsequence $(s_{j_k+1},\cdots,s_{j'_k-1})$ of $\bar s$.
\begin{center}
\begin{tikzpicture}[scale=1]

\draw [thick] (0,0)-- (5,0);
\draw [thick] (0,0)-- (0,-1);
\draw [thick] (5,0)-- (5,-1);

\put (-7, -7) {$\cdots$};
\put (26, -14) {$\cdots$};
\put (53, -7) {$\cdots$};

\put (-1, -14) {$\bar j_k$};
\put (6, -14) {$(j_k+1)$};
\put (39, -14) {$ j_k'$};
\put (49, -14) {$\bar j_k'$};

\end{tikzpicture}
\end{center} 
$\ $

Therefore, we have a bijection from $\{\pi|\pi\in[\pi'],\ker\bar s\}$ to $ \prod\limits_{k=1}^l\{\pi_k|\pi_k\in NC_2([j_k+1,j'_k-1]),\pi_k \leq \ker \bar s([j_k+1,j_k'-1])\}$ such that
$$\pi\rightarrow\{V_1,\cdots,V_l\}\bigcup\limits_{k=1}^l \pi_k, $$
where $\pi_k$ is the restriction of $\pi$  to the interval $[j_k+1,j'_k-1]$. In addition,
$$\sigma= \{\{1,j_2,\cdots, j_l,m+1\}\}\bigcup\limits_{k=1}^l OK(\pi_k).$$ 

For $\ii\subset [n]$ such that $\ker\ii=\sigma$,  let $\ii_1=(i_2,\cdots,i_{j_1'})$ and $\ii'=(i_1, i_{j'_1+1},i_{j'_1+2}\cdots,i_{m+1})$. 
Then $$\ker\ii_1=\sigma([2,j_1'])=OK(\pi_1)$$ 
and 
$$\ker\ii'= \{\{1,j_2,\cdots, j_l,m+1\}\}\bigcup\limits_{k=2}^l OK(\pi_k).$$ 
If we fix $\pi_2,\cdots, \pi_l$
and $i_1, i_{j'_1+1},i_{j'_1+2}\cdots,i_m$, and let $\pi_1$ run over all $NC_2([j_1+1,j'_1-1]))$ such that $\pi_1\leq \ker s([j_1+1,j'_1-1]))$.  Let $V=\{ i_1, i_{j'_1+1},i_{j'_1+2}\cdots,i_m\}$. Then $|V|<m$,  we have
\begingroup
\allowdisplaybreaks
\begin{align*}
&\frac{1}{n}\sum\limits_{\substack{\pi\in[\pi']_{\stackrel{out}{\sim}}\\ \pi\leq \ker\bar s}}\sum\limits_{\substack{\ii\subset [n]\\ \ker\ii=OK(\pi)} }\E[Pro_n(\bar{s},\bar{t},\ii)]\\
=&
\sum\limits_{\substack{\pi_2\in NC_2([j_2+1,j'_1-1]) \\ \pi_2 \leq \ker \bar s([j_2+1,j_2'-1])}}
\cdots 
\sum\limits_{\substack{\pi_l\in NC_2([j_l+1,j'_l-1]) \\ \pi_l \leq \ker \bar s([j_l+1,j_l'-1])}}
\sum\limits_{\substack{\ii'\subset[n]\\ker\ii'=\{\{1,j_2,\cdots, j_l,m+1\}\}\bigcup\limits_{k=2}^l OK(\pi_k) } }\\
&
\sum\limits_{\substack{\pi_1\in NC_2([j_1+1,j'_1-1]) \\ \pi_1\leq \ker \bar s([j_1+1,j_1'-1])}}
\sum\limits_{\substack{\ii_1\subset [n]\setminus V\\ \ker\ii_1=OK(\pi_1) } } \E[Pro_n(\bar{s},\bar{t},\ii)].\\
\end{align*}
\endgroup
Fixing $\ii'$,  since $\{1,j_1'\}$ is a block of $\pi$, we have 

\begin{align*}
&\sum\limits_{\substack{\pi_1\in NC_2([j_1+1,j'_1-1]) \\ \pi_1\leq \ker \bar s([j_1+1,j_1'-1])}}
\sum\limits_{\substack{\ii_1\subset [n]\setminus V\\ \ker\ii_1=OK(\pi_1) } } \E[Pro_n(\bar{s},\bar{t},\ii)]\\
=&\sum\limits_{\substack{\pi_1\in NC_2([j_1+1,j'_1-1]) \\ \pi_1\leq \ker \bar s([j_1+1,j_1'-1])}}
\sum\limits_{\substack{\ii_1\subset [n]\setminus V\\ \ker\ii_1=OK(\pi_1)} }\E[b(i_1;t_1,n)a(i_1,i_2;s_1,n)\cdots b(i_{m};t_{m},n) a(i_{m},i_{m+1};s_m,n)b(i_{m+1};t_{m+1},n)]\\
=&\sum\limits_{\substack{\pi_1\in NC_2([j_1+1,j'_1-1]) \\ \pi_1\leq \ker \bar s([j_1+1,j_1'-1])}}
\sum\limits_{\substack{\ii_1\subset [n]\setminus V\\ \ker\ii_1=OK(\pi_1) } }\E[b(i_1;t_1,n)\left(\frac{1}{n}\eta_{s_1} (\E[b(i_2;t_2,n)\cdots b(i_{j_1';t_{j_1'},n}])\right)\\
&\cdots b(i_{m};t_{m},n) a(i_{m},i_{m+1};s_m,n)b(i_{m+1};t_{m+1},n)]\\
=&\E[b(i_1;t_1,n)\left(\sum\limits_{\substack{\pi_1\in NC_2([j_1+1,j'_1-1]) \\ \pi_1\leq \ker \bar s([j_1+1,j_1'-1])}}
\sum\limits_{\substack{\ii_1\subset [n]\setminus V\\ \ker\ii_1=OK(\pi_1) } }\eta_{s_1} (\frac{1}{n}\E[b(i_2;t_2,n)\cdots b(i_{j_1';t_{j_1'},n}])\right)\\
&\cdots b(i_{m};t_{m},n) a(i_{m},i_{m+1};s_m,n)b(i_{m+1};t_{m+1},n)].\\
\end{align*}

By Lemma \ref{partial sum}, we have 
\begin{align*}
&\sum\limits_{\substack{\pi_1\in NC_2([j_1+1,j'_1-1]) \\ \pi_1\leq \ker \bar s([j_1+1,j_1'-1])}}
\sum\limits_{\substack{\ii_1\subset [n]\setminus V\\ \ker\ii_1=OK(\pi_1) } }\frac{1}{n}\E[b(i_2;t_2,n)\cdots b(i_{j_1';t_{j_1'},n})]\\
=&\sum\limits_{\substack{\pi_1\in NC_2([j_1+1,j'_1-1]) \\ \pi_1\leq \ker \bar s([j_1+1,j_1'-1])}}
\sum\limits_{\substack{\ii_1\subset [n]\\ \ker\ii_1=OK(\pi_1) } }\frac{1}{n}\E[b(i_2;t_2,n)\cdots b(i_{j_1';t_{j_1'},n})]+o(1).
\end{align*}

By induction, \begin{align*}
\sum\limits_{\substack{\pi_1\in NC_2([j_1+1,j'_1-1]) \\ \pi_1\leq \ker \bar s([j_1+1,j_1'-1])}}
\sum\limits_{\substack{\ii_1\subset [n]\\ \ker\ii_1=OK(\pi_1) } }\frac{1}{n}\E[b(i_2;t_2,n)\cdots b(i_{j_1';t_{j_1'},n})]=\E[D_{t_2}\cdots D_{t_{j_1'}}]+o(1).
\end{align*}

Therefore, for fixed $\ii'$,
\begin{align*}
&\sum\limits_{\substack{\pi_1\in NC_2([j_1+1,j'_1-1]) \\ \pi_1\leq \ker \bar s([j_1+1,j_1'-1])}}
\sum\limits_{\substack{\ii_1\in [n]\setminus\{\ii'\}\\ \ker\ii_1=OK(\pi_1) } } \E[Pro_n(\bar{s},\bar{t},\ii)]\\
=&\E[b(i_1;t_1,n)E_{V_1}b(i_1;t_{j'_1+1})\cdots b(i_{m};t_{m},n) a(i_{m},i_{m+1};s_m,n)b(i_{m+1};t_{m+1},n)]+o(1).
\end{align*}

Repeat the previous steps to $V_2,\cdots, V_l$, we will get 

\begin{align*}
&\frac{1}{n}\sum\limits_{\substack{\pi\in[\pi']_{\stackrel{out}{\sim}}\\ \pi\leq \ker\bar s}}\sum\limits_{\substack{\ii\subset [n]\\ \ker\ii=OK(\pi)} }\E[Pro_n(\bar{s},\bar{t},\ii)]\\
=&\frac{1}{n}\sum\limits_{i_1\in[n] }\E[b(i_1;t_1,n)E_{V_1}b(i_1;t_{j'_1+1})E_{V_2}\cdots E_{V_l}b(i_1;t_{m+1},n) ]+o(1)\\
=&\E_n[f(\{\D(t,n)|t\in T\})]+o(1).
\end{align*}
where $f=X_{t_1}E_{V_1}X_{t_{j'_1+1}}E_{V_2}\cdots E_{V_l}X_{t_{m+1}}$.  It follows that

\begin{equation}\label{limit moments}
\begin{aligned}
&\frac{1}{n}\sum\limits_{\substack{\pi\in[\pi']_{\stackrel{out}{\sim}}\\ \pi\leq \ker\bar s}}\sum\limits_{\substack{\ii\subset [n]\\ \ker\ii=OK(\pi)} }\E[Pro_n(\bar{s},\bar{t},\ii)]\\
=&\EE[f(\{D_t|t\in T\})]+o(1)\\
=&\E[D_{t_1}E_{V_1}D_{t_{j'_1+1}}E_{V_2}\cdots E_{V_l}D_{t_{m+1}}]+o(1).
\end{aligned}
\end{equation}

The proof is done.
\end{proof}

\section{Asymptotic freeness in an extended matrix model}

In this section, we prove an operator-valued asymptotic freeness theorem in Dykema's extended matrix model. 
Again, for each $n\in \N$, let  $\{e(i,j;n)|i,j=1,\cdots,n\}$ be family of matrix unites of $M_n(\C)$.  
We will use the following identification for the matrix algebras $M_N(C)\otimes M_k(\C)$ and $M_{kN}(\C)$: 
$$ e(p,q;N)\otimes e(i,j;k) \rightarrow e(p+(i-1)N,q+(j-1)L;kN).$$

Again, we assume that
$$ Y(s,n)=\sum\limits_{1\leq i,j, \leq n}  a(i,j;s,n)\otimes e(i,j;n)\in \domain \otimes M_n$$
are $\range$-valued random matrices  for  $s$ taking values in some set $S$ such that satisfying Conditions Y1)-Y5) in Theorem \ref{main}.

Let $\domain_N=\domain\otimes M_N(\C)$ and let $\E_{N,1}$ be the map from $\domain\otimes M_N(\C)$ to $\range\otimes \M_N(\C)$ such that $$\E_{N,1}[(a_{p,q})_{p,q=1,\cdots,N}]= (\E[a_{p,q}])_{p,q=1,\cdots,N},$$
where $a_{p,q}\in \domain,$ for $p,q=1,\cdots,N.$

It is obvious that   $\E_{N,1}$ is a $\range\otimes \M_N(\C)$-valued conditional expectation.  
In addition, $\E_{N,1}$ is positive since $\E$ is completely positive.
Therefore, $ (\domain\otimes M_N(\C),\E_{N,1}:\domain\otimes M_N(\C)\rightarrow \range\otimes M_N(\C))$ is a $\range\otimes M_N(\C)$-valued probability space.   Fixing $k$, let  $\{A(i,j;s,k)|s\in S, 1\leq i,j \leq m\}$ be the family of  $\range\otimes M_N(\C)$-valued random variables such that 
$$A(i,j;s,k)=\sum\limits_{1\leq p,q\leq N}a(p+(i-1)N,q+(j-1)N;s,kN)\otimes e(p,q;N).$$
Since
$ Y(s,n)=\sum\limits_{1\leq i,j, \leq n}  a(i,j;s,n)\otimes e(i,j;n)\in \domain \otimes M_n$, we have that 
  $$Y(s,kN)=\sum\limits_{1\leq i,j, \leq k}  A(i,j;s,k)\otimes e(i,j;n)\in (\domain\otimes M_N(\C) )\otimes M_k(\C).$$

Then, we can define a  conditional expectation $\E_{N,k}:  (\domain\otimes M_N(\C) )\otimes M_k(\C)\rightarrow \range\otimes M_N(\C)$ as follows:
$$ \E_{N,k}[\sum\limits_{1\leq i,j \leq k}  a(i,j)\otimes e(i,j;k)]=\frac{1}{k}\sum\limits_{1\leq i \leq k}  \E_{N,1}[a(i,i)].$$
where $a(i,j)\in \domain\otimes M_N(\C). $  One can easily see that  $\E_{kN}[\cdot]=\E_{N}[\E_{N,k}[\cdot]]$.
\begin{lemma}
$A(j,i;s,k)=A(i,j;s,k)^*,$  for all $1\leq i,j\leq k$, $s\in S$.
\end{lemma}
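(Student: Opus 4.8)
The plan is to prove the identity by a direct computation: unwind the definition of $A(i,j;s,k)$, push the $*$-operation through the sum and the two tensor factors, invoke the entrywise selfadjointness supplied by Condition Y1), and then relabel the summation indices to recognize the defining sum of $A(j,i;s,k)$.

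First I would apply the $*$-operation termwise to the definition. On $\domain\otimes M_N(\C)$ the adjoint acts by $(a\otimes e(p,q;N))^* = a^*\otimes e(q,p;N)$, so that
$$A(i,j;s,k)^* = \sum_{1\leq p,q\leq N} a(p+(i-1)N,\,q+(j-1)N;\,s,kN)^*\otimes e(q,p;N).$$
Next I would use Condition Y1), which in the form that makes each matrix $Y(s,n)$ selfadjoint reads $a(\alpha,\beta;s,n)^* = a(\beta,\alpha;s,n)$, applied to every entry. This turns the entry $a(p+(i-1)N,\,q+(j-1)N;\,s,kN)^*$ into $a(q+(j-1)N,\,p+(i-1)N;\,s,kN)$, giving
$$A(i,j;s,k)^* = \sum_{1\leq p,q\leq N} a(q+(j-1)N,\,p+(i-1)N;\,s,kN)\otimes e(q,p;N).$$
Finally, swapping the dummy indices $p\leftrightarrow q$ in the sum rewrites the right-hand side as $\sum_{p,q} a(p+(j-1)N,\,q+(i-1)N;\,s,kN)\otimes e(p,q;N)$, which is exactly the defining sum for $A(j,i;s,k)$, completing the argument.

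I expect no genuine obstacle here; the statement is a bookkeeping identity. The one place that requires care is tracking the two index shifts $p+(i-1)N$ and $q+(j-1)N$ simultaneously through the adjoint and the relabeling, so that the transposition the adjoint induces on the inner $M_N(\C)$ factor combines correctly with the transposition of the outer block indices $(i,j)\mapsto(j,i)$. Verifying that these two transpositions line up — so that both the shifted scalar indices and the matrix units land in precisely the configuration prescribed by $A(j,i;s,k)$ — is the only step worth writing out explicitly.
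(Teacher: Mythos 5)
Your proposal is correct and matches the paper's own proof essentially verbatim: both push the adjoint through the sum via $(a\otimes e(p,q;N))^* = a^*\otimes e(q,p;N)$, apply the entrywise symmetry $a(\alpha,\beta;s,kN)^* = a(\beta,\alpha;s,kN)$ from Condition Y1), and relabel the dummy indices to recover the defining sum of $A(j,i;s,k)$. Your explicit remark that Y1) must be read in the form that makes $Y(s,n)$ selfadjoint (rather than literally entrywise selfadjoint, as the theorem's statement sloppily writes) is exactly the right point of care, and it is the reading the paper's computation silently uses as well.
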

\begin{proof}  Notice that $a(i,j;s,n)=a(i,j;s,n)^*,$  for all $1\leq i,j\leq n$, $s\in S$. We have
\begin{align*}
A(i,j;s,k)^*=&(\sum\limits_{1\leq p,q\leq N}a(p+(i-1)N,q+(j-1)N;s,kN)\otimes e(p,q;N))^*\\
=&\sum\limits_{1\leq p,q\leq N}[a(p+(i-1)N,q+(j-1)N;s,kN)]^*\otimes [e(p,q;N)]^*\\
=&\sum\limits_{1\leq p,q\leq N}a(q+(j-1)N,p+(i-1)N;s,kN)\otimes e(q,p;N)\\
=&\A(j,i;s,k).
\end{align*}
The proof is done.
\end{proof}

\begin{lemma}
$\E_{N,1}[A(i,j;s,k)]=0,$ for all $1\leq i,j\leq k$, $s\in S$,
\end{lemma}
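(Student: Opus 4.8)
The plan is to reduce the statement to condition Y2) of Theorem~\ref{main} by a direct entrywise computation, exactly as in the preceding lemma on selfadjointness.

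First I would expand $A(i,j;s,k)$ according to its definition as the matrix over $\domain$ whose $(p,q)$ entry is $a(p+(i-1)N,q+(j-1)N;s,kN)$, i.e.
$$A(i,j;s,k)=\sum_{1\leq p,q\leq N}a(p+(i-1)N,q+(j-1)N;s,kN)\otimes e(p,q;N).$$
Since $\E_{N,1}$ is defined to act by applying $\E$ in each entry, $\E_{N,1}[(a_{p,q})]=(\E[a_{p,q}])$, I would push $\E_{N,1}$ through the sum to obtain
$$\E_{N,1}[A(i,j;s,k)]=\sum_{1\leq p,q\leq N}\E[a(p+(i-1)N,q+(j-1)N;s,kN)]\otimes e(p,q;N).$$

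Next I would verify that condition Y2) applies to each summand. Because $1\leq p,q\leq N$ and $1\leq i,j\leq k$, the shifted indices satisfy $1\leq p+(i-1)N\leq kN$ and $1\leq q+(j-1)N\leq kN$, so each entry is one of the random variables $a(\cdot,\cdot;s,kN)$ occurring in the family of size $kN$, and hence $\E[a(p+(i-1)N,q+(j-1)N;s,kN)]=0$ by Y2). Substituting this into the display above makes every term of the sum vanish, which yields $\E_{N,1}[A(i,j;s,k)]=0$.

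There is no genuine obstacle here: the only point requiring a line of justification is that the reindexing $p\mapsto p+(i-1)N$, $q\mapsto q+(j-1)N$ keeps the indices inside the admissible range $\{1,\dots,kN\}$, so that Y2) is legitimately invoked; everything else is the defining $\range$-linearity (entrywise action) of $\E_{N,1}$. This mirrors the structure of the adjacent selfadjointness lemma, where condition Y1) played the role that Y2) plays here.
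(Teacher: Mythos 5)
Your proof is correct and follows the same route as the paper, which simply cites condition Y2); you have merely spelled out the entrywise expansion of $A(i,j;s,k)$, the entrywise action of $\E_{N,1}$, and the index-range check $1\leq p+(i-1)N,\, q+(j-1)N\leq kN$ that the paper leaves implicit. Nothing is missing.
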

\begin{proof}
It follows the fact that $\E[a(i,j;s,n))]=0,$ for all $1\leq i,j\leq n$, $s\in S$.
\end{proof}

\begin{lemma}\label{Matrix cpm}\normalfont Let $I_N$ be the unit of $\range\otimes M_N(\C)$. Then, for $1\leq i,j\leq k$, $s\in S$,
$$\E_{N,1}[A(i,j;s,k) (b_{p,q})_{p,q=1,\cdots,N} A(j,i;s,k)]=\frac{1}{k}\eta_s(\sum\limits_{i=1}^N b_{p,p})\otimes I_N,$$ 
 for all $(b_{i,j})_{i,j=1,\cdots,N} \in\domain\otimes M_N(\C)$. Furthermore, $\E_{N,1}[A(i,j;s,k) \bullet A(i,j;s,k)]$ is a completely positive map from $\range\otimes M_N(\C)$ to $\range\otimes M_N(\C)$.
\end{lemma}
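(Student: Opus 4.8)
The plan is to expand both sides as $N\times N$ matrices over $\domain$, apply the entrywise expectation $\E_{N,1}$, and reduce the resulting sums using conditional independence together with condition Y3). Writing $n=kN$, the variable $A(i,j;s,k)$ is by definition the $N\times N$ matrix over $\domain$ whose $(p,q)$ entry is $\alpha_{p,q}:=a(p+(i-1)N,q+(j-1)N;s,kN)$, while $A(j,i;s,k)$ has $(r,t)$ entry $\beta_{r,t}:=a(r+(j-1)N,t+(i-1)N;s,kN)$; by the relation $A(j,i;s,k)=A(i,j;s,k)^*$ proved just above, $\beta_{r,t}=\alpha_{t,r}^*$. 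Taking $b=(b_{q,r})\in\range\otimes M_N(\C)$ (so that the middle factors lie in $\range$, as $\eta_s$ requires), matrix multiplication followed by the definition of $\E_{N,1}$ shows that the $(p,t)$ entry of $\E_{N,1}[A(i,j;s,k)\,b\,A(j,i;s,k)]$ equals $\sum_{q,r}\E[\alpha_{p,q}\,b_{q,r}\,\beta_{r,t}]$.

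First I would kill the cross terms. Setting $P=p+(i-1)N$, $Q=q+(j-1)N$, $R=r+(j-1)N$, $T=t+(i-1)N$, the factors $\alpha_{p,q}=a(P,Q)$ and $\beta_{r,t}=a(R,T)$ are members of the conditionally independent family of Y5), and they come from the same conditionally independent variable exactly when $\{P,Q\}=\{R,T\}$. When $\{P,Q\}\neq\{R,T\}$ they are conditionally independent, so applying the defining identity of conditional independence with trailing unit $1_\domain$ and the middle factor $b_{q,r}\beta_{r,t}$ (which lies in the algebra generated by $a(R,T)$ and $\range$), together with $\E[b_{q,r}\beta_{r,t}]=b_{q,r}\E[\beta_{r,t}]=0$ from Y2), gives $\E[\alpha_{p,q}\,b_{q,r}\,\beta_{r,t}]=\E[\alpha_{p,q}\,\E[b_{q,r}\beta_{r,t}]]=0$. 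Hence only matched indices survive.

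Next I would evaluate the matched terms with Y3). For $i\neq j$ the index blocks are disjoint, so $\{P,Q\}=\{R,T\}$ forces $P=T$ and $Q=R$, i.e. $p=t$ and $q=r$, and then $\beta_{q,p}=\alpha_{p,q}^*$, whence Y3) gives $\E[\alpha_{p,q}\,b_{q,q}\,\alpha_{p,q}^*]=\tfrac1{kN}\eta_s(b_{q,q})$. Summing over $q$ produces the diagonal value $\tfrac1{kN}\eta_s(\sum_{q=1}^N b_{q,q})$, independent of $p$, with all off-diagonal entries vanishing; collecting the factor $\tfrac1{kN}$ over the $N$ surviving terms gives exactly the asserted multiple of $\eta_s(\sum_p b_{p,p})\otimes I_N$. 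The delicate point, and the step I expect to be the main obstacle, is the diagonal-block case $i=j$: there $\{P,Q\}=\{R,T\}$ additionally admits the non-conjugate solution $(R,T)=(P,Q)$, producing off-diagonal terms $\E[a(P,T)\,b_{t,p}\,a(P,T)]$ with $P\neq T$ that are \emph{not} governed by Y3). One must argue these vanish — this is the normalization of the model, that only the conjugate second moments $\E[a(u,v)\bullet a(v,u)]$ are nonzero — while for $P=T$ the term is again handled by Y3). Careful tracking of which $(q,r)$ pairs contribute is where the real bookkeeping lies.

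Finally, for the completely-positive assertion I would bypass the explicit formula. Since $A(j,i;s,k)=A(i,j;s,k)^*$, the map in question is $b\mapsto\E_{N,1}[A(i,j;s,k)\,b\,A(i,j;s,k)^*]$. The conjugation $\Phi\colon b\mapsto A(i,j;s,k)\,b\,A(i,j;s,k)^*$ from $\range\otimes M_N(\C)$ into $\domain\otimes M_N(\C)$ is completely positive, since each ampliation $\Phi\otimes\mathrm{id}$ acts by conjugation with $\mathrm{diag}(A(i,j;s,k))$ and hence preserves positivity; and $\E_{N,1}$ is completely positive because $\E$ is completely positive. The desired map is the composition $\E_{N,1}\circ\Phi$ of two completely positive maps, and is therefore itself completely positive, which proves the \emph{furthermore}.
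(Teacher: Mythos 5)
Your proposal follows the same route as the paper's proof: expand the $(p,t)$ entry of $A(i,j;s,k)\,b\,A(j,i;s,k)$ as $\sum_{q,r}\E[\alpha_{p,q}\,b_{q,r}\,\beta_{r,t}]$, kill the unmatched terms using conditional independence together with Y2), and evaluate the conjugate-matched terms ($q=r$, $p=t$) by Y3). Your composition argument for the \emph{furthermore} ($\E_{N,1}\circ\Phi$ with $\Phi$ a conjugation, both completely positive) is a clean alternative to the paper, which in effect reads complete positivity off the explicit formula ($b\mapsto\eta_s(\sum_p b_{p,p})\otimes I_N$ is a composition of completely positive maps); either route is fine. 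One small but real discrepancy works in your favor: the constant you compute, $\frac{1}{kN}\,\eta_s\bigl(\sum_q b_{q,q}\bigr)$, is the one that actually follows from Y3) at matrix size $n=kN$, whereas the lemma's stated $\frac1k$ is off by a factor of $N$. Since $N$ is fixed this is harmless for Theorem \ref{matrix model} (it only rescales the limiting variance to $\tilde\eta_s(b)=\frac1N\eta_s(\sum_p b_{p,p})\otimes I_N$), but your closing phrase ``gives exactly the asserted multiple'' glosses over the mismatch: the $N$ surviving terms of size $\frac{1}{kN}\eta_s(b_{q,q})$ sum to $\frac{1}{kN}\eta_s(\sum_q b_{q,q})$, not $\frac1k\eta_s(\sum_q b_{q,q})$.

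The ``delicate point'' you flag in the diagonal-block case $i=j$ is a genuine gap --- but it is a gap in the paper's proof, not merely in yours. The paper's displayed case analysis simply declares every term other than ($p=q$, $s=t$) to be zero, yet for $i=j$ the non-conjugate matched terms $\E[a(P,T)\,b\,a(P,T)]$ with $P\neq T$ pair a variable with itself, so conditional independence says nothing about them, and Y3) only fixes the conjugate moments $\E[a(u,v)\bullet a(v,u)]$. You correctly identify that an extra hypothesis is needed, but note that you cannot discharge it by appealing to ``the normalization of the model'': no condition among Y1)--Y5) forces $\E[a(u,v)\,b\,a(u,v)]=0$ for $u\neq v$. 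Indeed, real-symmetric-type entries with $a(v,u)=a(u,v)$ selfadjoint satisfy Y1)--Y5), and for them these terms equal $\frac{1}{kN}\,b_{t,p}$, so $\E_{N,1}[A(i,i)\,b\,A(i,i)]$ acquires nonzero off-diagonal entries and the conclusion of Lemma \ref{Matrix cpm} fails for $i=j$. So the lemma as stated implicitly assumes circular/GUE-type entries (vanishing non-conjugate second moments), and a complete proof --- yours or the paper's --- must either add that assumption explicitly or restrict the identity to $i\neq j$; your bookkeeping correctly isolates exactly where this is needed (only the off-diagonal entries of the blocks with $i=j$), while for $P=T$ the term is indeed covered by Y3), as you say.
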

\begin{proof}
For $1\leq s,t\leq N$, the $(s,t)$-th entry of $\E_{N,1}[A(i,j;s,k) (b_{p,q})_{p,q=1,\cdots,N} A(i,j;s,k)]$ is 
$$\E[\sum\limits_{1\leq p,q\leq N}a(s+(i-1)N,p+(j-1)N;s,kN)b_{p,q}a(q+(j-1)N,t+(i-1)N;s,kN)].$$
Since the family $\{a(i,j;n,s)|s\in S, 1\leq i\leq j \leq m\}$ of random variables are $\range-$valued conditionally independent, $\E[a(i,j;s,n))]=0,$ for all $1\leq i,j\leq n$, $s\in S$ and  $\E[a(i,j;s,n)\bullet a(j,i;s,n))]=\frac{1}{n}\eta_s(\bullet)$, for all $1\leq i,j\leq n$, $s\in S$, we have

\begin{align*}
&E[a(s+(i-1)N,p+(j-1)N;s,kN)b_{p,q}a(q+(j-1)N,t+(i-1)N;s,kN)]\\
=&\left\{
\begin{array}{lc}
\frac{1}{k}\eta_s(b_{p,p})& \text{if $s=t$ and $p=q$} \\
0&\text{otherwise.}
\end{array}\right.
\end{align*}
\end{proof}

Therefore,  the $(s,t)$-th entry of $\E_{N,1}[A(i,j;s,k) (b_{p,q}))_{p,q=1,\cdots,N} A(i,j;s,k)]$ is
 $$ \delta_{s,t}\frac{1}{k}\eta_s(\sum\limits_{i=1}^N b_{p,p}),$$
which is what we want to show.

\begin{lemma}
For each $m$, there exists an $\MM_m>0$ such that  $$\sup\limits_{\substack{s_1,...,s_m\in S\\ 1\leq i_1,\cdots i_m, j_1,\cdots, j_m
\leq n}}\|{A(i_1,j_1;s_1,k)b_{N,1}A(i_2,j_2;s_2,k) b_{N,2}\cdots b_{N,m-1}A(i_m,j_m;s_m,k)}\|\leq \MM_n^{-m/2}\prod\limits_{k=1}^{m-1}\|b_k\|,$$
where $ b_{N,1},\cdots, b_{N,m-1}\in \range\otimes M_N{\C}$. 
\end{lemma}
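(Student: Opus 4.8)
The plan is to reduce the stated bound to a purely entrywise application of Condition Y4) at matrix size $n=kN$, exploiting the fact that in the extended model $N$ is a \emph{fixed} constant while $k\to\infty$, so only decay in $k$ is needed and any polynomial factor in $N$ may be absorbed into $\MM_m$. This is the conceptual key: the essential $k^{-m/2}$ decay is delivered verbatim by the factor $(kN)^{-m/2}$ coming from Y4), and every crude, $N$-lossy estimate below is harmless.

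First I would expand the product into its $N\times N$ matrix entries over $\domain$. Writing $b_{N,\ell}=\big((b_{N,\ell})_{p,q}\big)_{p,q=1}^N$ with $(b_{N,\ell})_{p,q}\in\range$, and recalling that the $(p,q)$ entry of $A(i_\ell,j_\ell;s_\ell,k)$ is $a(p+(i_\ell-1)N,\,q+(j_\ell-1)N;s_\ell,kN)$, ordinary matrix multiplication gives, for $1\le u,v\le N$,
$$\big[A(i_1,j_1;s_1,k)b_{N,1}\cdots A(i_m,j_m;s_m,k)\big]_{u,v}=\sum a(u+(i_1-1)N,c_1+(j_1-1)N;s_1,kN)(b_{N,1})_{c_1,c_1'}\cdots a(c_{m-1}'+(i_m-1)N,v+(j_m-1)N;s_m,kN),$$
where the sum runs over the $2(m-1)$ internal indices $c_1,c_1',\dots,c_{m-1},c_{m-1}'\in\{1,\dots,N\}$.

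Second, each summand is exactly a product of $m$ entries $a(\cdot,\cdot;s_\ell,kN)$ interleaved by the $m-1$ scalars $(b_{N,\ell})_{c_\ell,c_\ell'}\in\range$, so Condition Y4) at size $n=kN$ bounds its norm by $M_m(kN)^{-m/2}\prod_{\ell=1}^{m-1}\|(b_{N,\ell})_{c_\ell,c_\ell'}\|$. Summing over the internal indices, using the contractivity of compression $\|(b_{N,\ell})_{c,c'}\|\le\|b_{N,\ell}\|$ together with $\sum_{c,c'=1}^N 1=N^2$, yields
$$\big\|\big[A(i_1,j_1;s_1,k)b_{N,1}\cdots A(i_m,j_m;s_m,k)\big]_{u,v}\big\|\le M_m(kN)^{-m/2}N^{2(m-1)}\prod_{\ell=1}^{m-1}\|b_{N,\ell}\|.$$

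Finally I would reassemble the matrix norm from its entries via $\|X\|\le\sum_{u,v}\|X_{u,v}\|$, valid since the matrix units $e(u,v;N)$ have norm one; this contributes a further factor $N^2$ and gives
$$\big\|A(i_1,j_1;s_1,k)b_{N,1}\cdots A(i_m,j_m;s_m,k)\big\|\le M_mN^{2m}(kN)^{-m/2}\prod_{\ell=1}^{m-1}\|b_{N,\ell}\|\le \MM_m\,k^{-m/2}\prod_{\ell=1}^{m-1}\|b_{N,\ell}\|,$$
with $\MM_m:=M_mN^{2m}$, using $(kN)^{-m/2}\le k^{-m/2}$; the bound is uniform over all indices $s_\ell,i_\ell,j_\ell$ precisely because the constant $M_m$ in Y4) is. The main obstacle is not analytic but conceptual and bookkeeping: one must keep $N$ fixed (so that the $N$-lossy bounds on the entries of $b_{N,\ell}$ and on the passage from entries to matrix norm are acceptable and get swallowed by $\MM_m$), and one must correctly track the index identification $e(p,q;N)\otimes e(i,j;k)\mapsto e(p+(i-1)N,\,q+(j-1)N;kN)$ when identifying the $a$-entries appearing in the expansion. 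I would not attempt any norm estimate uniform in $N$, as that would require genuine cancellation from Y5) and is neither available from Y4) alone nor needed here.
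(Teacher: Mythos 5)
Your proposal is correct and follows essentially the same route as the paper: expand the product entrywise over the fixed $N\times N$ matrix structure into $N^{2m-2}$ monomials, bound each entry $(b_{N,\ell})_{c,c'}$ by $\|b_{N,\ell}\|$, apply Y4) at matrix size $kN$ to each monomial, and reassemble the operator norm at the cost of a polynomial factor in $N$ absorbed into the constant. The only differences are bookkeeping: the paper passes from entries to the matrix norm via $N\max_{p,q}\|a_{p,q}\|$ (yielding $\MM_m=N^{2m-1}M_m$) where you use $\sum_{u,v}\|X_{u,v}\|$ (yielding $N^{2m}$), which is immaterial since $N$ is fixed.
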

\begin{proof} 
Assume that ${A(i_1,j_1;s_1,k)b_{N,1}A(i_2,j_2;s_2,k) b_{N,2}\cdots b_{N,m-1}A(i_m,j_m;s_m,k)}=(a_{p,q})_{p,q=1,\cdots,N}$, then 
$$\| ((a_{p,q})_{p,q=1,\cdots,N}\|\leq N\max\limits_{p,q=1,\cdots,N}{\{\|a_{p,q}\|\}}.$$
For each $l$,  assume that  $b_{N,l}=(b(p,q;N,l))_{p,q=1,\cdots,N}$, then 
$$ \|b(p,q;N,l)\|\leq \| b_{N,l}\|.$$
By direct computations, we have that $a_{p,q}$ a sum of $N^{2m-2}$ monomials in the form of 
$$a(\cdots)b(\dots;N,1)a(\cdots)\cdots b(\dots;N,m-1)a(\cdots). $$
On the other hand, we have that  there exists an $M_m>0$ such that  $$\sup\limits_{\substack{s_1,...,s_m\in S\\ 1\leq i_1,\cdots i_m, j_1,\cdots, j_m
\leq n}}\|{a(i_1,j_1;s_1,n)b_1a(i_2,j_2;s_2,n)b_2\cdots b_{m-1}a(i_m,j_m;s_m,n)}\|\leq M_mn^{-m/2}\prod\limits_{l=1}^{m-1}\|b_l\|,$$
for $b_1,\cdots,b_{m-1}\in \range$. Therefore, we have 
\begin{align*}
&\|a(\cdots)b(\dots;N,1)a(\cdots)\cdots b(\dots;N,m-1)a(\cdots)\|\\
\leq& M_mn^{-m/2}\prod\limits_{l=1}^{m-1}\|b(\dots;N,l)\|\\
\leq& M_mn^{-m/2}\prod\limits_{l=1}^{m-1}\|b_{N,l}\|. 
\end{align*}
Therefore, $$\|a_{p,q}\|\leq N^{2m-2}M_mn^{-m/2}\prod\limits_{l=1}^{m-1}\|b_{N,l}\|.$$
Let $\MM_m=N^{2m-1}M_m$. Then statement follows.
\end{proof}

\begin{lemma}
The family $\{A(i,j;s,k)|s\in S, 1\leq i\leq j \leq m\}$ of random variables are $\range\otimes M_N(C)$-valued conditionally independent.
\end{lemma}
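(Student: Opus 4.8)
The plan is to reduce the $\range\otimes M_N(\C)$-valued conditional independence of the amplified family $\{A(i,j;s,k)\}$ directly to the $\range$-valued conditional independence of the entries $\{a(i',j';s,kN)\}$ granted by Condition Y5). By the definition of conditional independence, it suffices to fix disjoint index sets $I_1,I_2$ of triples $(i,j,s)$ with $i\le j$ and to verify
\[
\E_{N,1}[\alpha_1\alpha_2\alpha_3]=\E_{N,1}\big[\alpha_1\,\E_{N,1}[\alpha_2]\,\alpha_3\big]
\]
for all $\alpha_1,\alpha_3$ in the $*$-algebra generated by $\range\otimes M_N(\C)$ and $\{A(i,j;s,k):(i,j,s)\in I_1\}$, and all $\alpha_2$ in the $*$-algebra generated by $\range\otimes M_N(\C)$ and $\{A(i,j;s,k):(i,j,s)\in I_2\}$.

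First I would pass to matrix entries. Since $\E_{N,1}$ is computed entrywise by $\E$ and is $\range\otimes M_N(\C)$-bimodular, writing out the $(s,t)$-entry of each side gives $\sum_{u,v}\E[(\alpha_1)_{s,u}(\alpha_2)_{u,v}(\alpha_3)_{v,t}]$ on the left and $\sum_{u,v}\E[(\alpha_1)_{s,u}\,\E[(\alpha_2)_{u,v}]\,(\alpha_3)_{v,t}]$ on the right, where all subscripts now denote entries lying in $\domain$. Hence the matrix identity follows once one shows the scalar identity
\[
\E[(\alpha_1)_{s,u}(\alpha_2)_{u,v}(\alpha_3)_{v,t}]=\E[(\alpha_1)_{s,u}\,\E[(\alpha_2)_{u,v}]\,(\alpha_3)_{v,t}]
\]
for every choice of $s,u,v,t\in\{1,\dots,N\}$.

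Next I would identify the entry variables occurring on each factor. Every entry of $A(i,j;s,k)$ is some $a(p+(i-1)N,q+(j-1)N;s,kN)$, so each entry of an element of the $*$-algebra generated by $\range\otimes M_N(\C)$ and the $I_\ell$-generators lies in $alg_\range\{a(\cdot):(i,j,s)\in I_\ell\}$, the $\range$-valued $*$-algebra spanned by the corresponding entry variables (the coefficients from $\range\otimes M_N(\C)$ contribute only scalar entries in $\range$). The crucial point is that distinct generators use \emph{disjoint} entry-index sets: a differing color $s$ is disjoint automatically, while for a fixed $s$ the generator $A(i,j;s,k)$ together with its adjoint $A(j,i;s,k)=A(i,j;s,k)^*$ occupies exactly the ordered block-pairs $(i,j)$ and $(j,i)$, so two generators indexed by distinct $(i,j)$ and $(i',j')$ with $i\le j$, $i'\le j'$ satisfy $\{i,j\}\ne\{i',j'\}$ and therefore meet no common entry. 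Consequently $I_1\cap I_2=\emptyset$ lifts to disjointness of the two underlying sets of entry variables.

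With $(\alpha_1)_{s,u}$ and $(\alpha_3)_{v,t}$ lying in the $\range$-algebra generated by the $I_1$-entries and $(\alpha_2)_{u,v}$ in the $\range$-algebra generated by the disjoint $I_2$-entries, the scalar identity above is precisely the conditional independence relation of Condition Y5) for the entry family, and the proof is complete. I expect the combinatorial disjointness step to be the main obstacle: one must check that the symmetry constraint $i\le j$ combined with the adjoint relation $A(j,i;s,k)=A(i,j;s,k)^*$ never forces two distinct generators to share an entry variable, so that the disjointness of the index sets of the $A$'s genuinely descends to disjointness at the level of the $a$'s.
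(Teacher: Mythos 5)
Your proposal is correct and follows essentially the same route as the paper's proof: both reduce the identity $\E_{N,1}[\alpha_1\alpha_2\alpha_3]=\E_{N,1}[\alpha_1\E_{N,1}[\alpha_2]\alpha_3]$ to entrywise identities in $\domain$, observe that disjoint sets of generators $A(i,j;s,k)$ have disjoint underlying sets of entry variables $a(p+(i-1)N,q+(j-1)N;s,kN)$, and then apply Condition Y5) term by term before resumming. If anything, your treatment of the disjointness step (handling the constraint $i\le j$ together with the adjoint relation $A(j,i;s,k)=A(i,j;s,k)^*$) is spelled out more carefully than in the paper, which simply asserts $S_1\cap S_2=\emptyset$.
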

\begin{proof}
Let $S_{N1},S_{N2} \subset \{A(i,j;s,k)|s\in S, 1\leq i\leq j \leq m\}$ such that $S_{N1}\cap S_{N2}=\emptyset$. For $l=1,2$, define 
$$S_l=\{a(p,q;s, kN)| a(p,q;s, kN)\,\text{is an entry of}\,A(i,j;s,k), A(i,j;s,k)\in S_l  \}.$$
By the definition of $A(i,j;s,k)$, we have that $S_1\cap S_2=\emptyset$. Given $A_1,A_3\in Alg_\range\{S_{N1}\}$ and $A_2 \in Alg_\range\{S_{N1}\}$.

For $t=1,2,3$, assume that $A_t=(A(p,q;t))_{p,q=1,\cdots,N}$, then $A(p,q;1),A(p,q;3)\in  Alg_\range\{S_{1}\}$ and $A(p,q;2)\in  Alg_\range\{S_{2}\}$
for all $p,q=1,\cdots N.$  Therefore,
   $$\E[A(p_1,p_2;1)A(p_2,p_3;t2)A(p_3,p_4;2)]=\E[A(p_1,p_2;1)\E[A(p_2,p_3;t2)]A(p_3,p_4;2)].$$
It follows that 
    $$\E_{N,1}[A_1A_2A_3]=\E_{N,1}[A_1\E_{N,1}[A_2]A_3].$$
    Since $A_1,A_2,A_3$ are arbitrary,  the proof is done.
\end{proof}

In summary,  as a family of $\range\otimes M_N(\C)$-valued random variables,  the family of random variables $\{A(i,j;s,k)|s\in S, 1\leq i, j \leq m\}$ satisfy the conditions Y1) to Y5) in Theorem \ref{main}.\\

Let $D(t,kN)=\sum\limits_{1\leq i \leq k}  D(i;t,k)\otimes e(i,i,k)$, where  $D(i;t,k)\in\range\otimes M_N(\C)$ and $t$ takes values in some index set $T$.  Suppose that $(\{D(t,kN)|t\in T\})$ satisfying the following conditions:
\begin{itemize}
\item[D1')] the joint distribution of $\{D(t,kN)\}_{t\in T}$ converges weakly in norm  as $\range$-valued random variables, namely with respect to $\E_{kN}$.
\item[D2')] $$\lim\limits_{n\rightarrow \infty}\frac{\|D(t,n)\|^l}{k}=0,$$ for all $t\in T$, $l\in\N$.
\item[D3')] For all $t\in T$ and $l\in\N$, we have$$ \limsup\frac{\sum\limits_{i=1}^n\|D(t,kN)\|_{\range\otimes M_N(\C)}^l}{k}< \infty,$$ 
where $\|\cdot\|_{\range\otimes M_N(\C)}$ is the $C^*$-norm on  $\range\otimes M_N(\C)$.
\end{itemize}

\begin{remark} \normalfont 
One should be careful that Condition D1') does not imply that   $\{D(t,kN)\}_{t\in T}$ converges weakly in norm as $\range\otimes M_n(\C)$-valued random variables with respect to $\E_{N,k}$.  For example, let $N=2$ and $D(i;k)=(-1)^k[1_\range\otimes e(1,2;2)+1_\range\otimes e(2,1;2)]$. Then the sequence $(D(kN)=\sum\limits_{1\leq i \leq k}  D(i;k)\otimes e(i,i,k))_{k\in \N}$ converges weakly in norm with respect to $\E_{2k}$ but not for $\E_{2,k}$.
\end{remark}

Therefore,  as $\range\otimes M_N(\C)$-valued random variables,  $(\{ Y(s,kN)| s\in S\}, \{D(t,kN)\}_{t\in T})$ satisfy the Conditions Y1) to Y5), D2) D3) in Theorem \ref{main}. Following the proof of Theorem \ref{main} and the fact that 
$\{D(t,kN)\}_{t\in T}$ converges weakly in norm  with respect to $\E_{kN}$. We have the following theorem.

\begin{theorem}\label{matrix model}\normalfont
Let 
$$ Y(s,n)=\sum\limits_{1\leq i,j, \leq n}  a(i,j;s,n)\otimes e(i,j;n)\in \domain \otimes M_n$$
be $\range$-valued random matrices  for  $s$ taking values in some set $S$ such that satisfying Y1)-Y5).

Fixing  $N$, for  $t$ taking values in some set $T$ and $k\in \N$, let $D(t,kN)=\sum\limits_{1\leq i \leq n}  D(i;t,k)\otimes e(i,i,k)$, where  $D(i;t,k)\in\range\otimes M_N(\C)$.  Suppose that $(\{D(t,kN)|t\in T\})$ satisfying the following conditions
\begin{itemize}
\item[D1')] the joint distribution of $\{D(t,kN)\}_{t\in T}$ converges weakly in norm  with respect to $\E_{kN}$.
\item[D2')] $$\lim\limits_{n\rightarrow \infty}\frac{\|D(t,n)\|^l}{k}=0,$$ for all $t\in T$, $l\in\N$.
\item[D3')] For all $t\in T$ and $l\in\N$, we have$$ \limsup\frac{\sum\limits_{i=1}^n\|D(t,kN)\|_{\range\otimes M_N(\C)}^l}{k}< \infty,$$ 
where $\|\cdot\|_{\range\otimes M_N(\C)}$ is the $C^*$-norm on  $\range\otimes M_N(\C)$.
\end{itemize}

Then the joint distributions of the  family of sets of random variables 
$$\{Y(s,kN)\}_{s\in S}\cup\{D(t,kN)|t\in T\}$$
with respect to $\E_{kN}$ converge weakly in norm to the joint distribution of  the family of  $$\{Y_s\}_{s\in S}\cup\{D_t|t\in T\}$$  such that the family of subsets $\{ \{Y_s\}|s\in S \}\cup\{\{D_t|t\in T\}\}$ are freely independent. 
Moreover,  for each $s\in S$,  the distribution of $Y_s$ is a $\range$-valued semicircular law with variance $\eta_s.$
\end{theorem}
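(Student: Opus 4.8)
The plan is to read the whole family $\{Y(s,kN)\}_{s\in S}\cup\{D(t,kN)\}_{t\in T}$ not as $\range$-valued matrices of size $kN$, but as $k\times k$ matrices over the enlarged coefficient algebra $\range\otimes M_N(\C)$, and then to replay the proof of Theorem \ref{main} inside the $\range\otimes M_N(\C)$-valued probability space $(\domain\otimes M_N(\C),\E_{N,1})$. The preceding lemmas already supply everything the combinatorial core of that proof needs: the blocks $A(i,j;s,k)$ are self-adjoint, $\E_{N,1}$-centered, have equal second moments, obey the uniform norm bound with a new constant $\MM_m$, and are $\range\otimes M_N(\C)$-valued conditionally independent, so Y1)--Y5) hold for the size-$k$ model over $\range\otimes M_N(\C)$, while $\{D(t,kN)\}$ satisfies D2$'$) and D3$'$). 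First I would transcribe the reductions of Section 4 verbatim at this level: the vanishing-single-entry argument, Lemma \ref{infinitesimal 1}, and Lemma \ref{even length} use only Y1)--Y5) and D2$'$)--D3$'$), so they localize $\E_{N,k}$-moments of the $A$'s and $D$'s, up to $o(1)$, onto the partitions $OK(\pi)$ with $\pi\in NC_2(m)$ and $\pi\le\ker\bar s$.

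The one ingredient of Theorem \ref{main} that is \emph{not} available over $\range\otimes M_N(\C)$ is D1): as the Remark preceding the theorem shows, $\{D(t,kN)\}$ need not converge weakly in norm with respect to $\E_{N,k}$. The device that rescues the argument is the tower $\E_{kN}=\E_N\circ\E_{N,k}$ together with the shape of the variance. Tracking the normalization in Lemma \ref{Matrix cpm} (the per-block factor must carry the compensating $\tfrac1N$ from $\E_N$) shows that the limiting $\range\otimes M_N(\C)$-valued variance of $Y(s,kN)$ is
$$\eta'_s(b)=\eta_s(\E_N[b])\otimes I_N,$$
so $\eta'_s$ \emph{factors through} the conditional expectation $\E_N\colon\range\otimes M_N(\C)\to\range$. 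Consequently, although the inner diagonal moments produced by the recursion of Lemma \ref{recursive relation of limite distribution} live in $\range\otimes M_N(\C)$ and by themselves may fail to converge, each of them is immediately fed into some $\eta'_{s}$, which sees it only through its $\E_N$-image. Since $\E_N[\E_{N,k}[\,\cdot\,]]=\E_{kN}[\,\cdot\,]$, the surviving quantity at each outer block $V_k$ is
$$E'_{V_k}=\eta_{s_{j_k}}(\E_{kN}[D(t_{j_k+1},kN)Y(s_{j_k+1},kN)\cdots Y(s_{j'_k-1},kN)D(t_{j'_k},kN)])\otimes I_N,$$
an $\range$-valued expectation of a strictly shorter mixed moment.

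This is what lets me run the whole estimate as an induction on $m$ at the level of $\E_{kN}$ rather than $\E_{N,k}$. The plan is: apply $\E_N$ to the localized sum, use the factorization to rewrite every $\eta'_s$-of-an-inner-block as $\eta_s$-of-an-$\E_{kN}$-moment, invoke the inductive hypothesis to see that these shorter $\E_{kN}$-moments converge to the prescribed free-over-$\range$ values, and then assemble the outermost layer, a product of diagonals interleaved with the now-constant central elements $E'_{V_k}$, which is itself an $\E_{kN}$-moment of the $D(t,kN)$'s and is therefore controlled by D1$'$). The partial-sum replacement Lemma \ref{partial sum} and the single-block moment recursion transfer unchanged with the constant $\MM_m$. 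Matching the resulting recursion against the $\range$-level identity of Lemma \ref{recursive relation of limite distribution} then identifies the limit moments as those of a family $\{Y_s\}\cup\{D_t\}$ that is freely independent over $\range$ with each $Y_s$ semicircular of variance $\eta_s$.

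The main obstacle, and the only place where genuine care is required, is the bookkeeping of the previous paragraph: one must verify that in the proof of Theorem \ref{main} every appeal to D1) enters only through an $\range$-valued (here $\E_{kN}$-) expectation of a product of diagonal elements, never through a bare $\E_{N,k}$-moment, so that the weaker hypothesis D1$'$) really does suffice. This rests entirely on the factorization $\eta'_s(b)=\eta_s(\E_N[b])\otimes I_N$; establishing that identity cleanly and then confirming that no other step of Section 4 secretly uses $\E_{N,k}$-convergence is the crux, after which the estimates of Lemmas \ref{infinitesimal 1}--\ref{partial sum} and the induction close the argument exactly as in the scalar coefficient case.
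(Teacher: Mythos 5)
Your proposal follows the paper's own route: reinterpret the $kN\times kN$ matrices as $k\times k$ matrices over $\range\otimes M_N(\C)$, use the section's lemmas to verify Y1)--Y5) at that level together with D2$'$)--D3$'$), and rerun the proof of Theorem \ref{main}, with D1$'$) entering only through the tower $\E_{kN}=\E_N\circ\E_{N,k}$. Your explicit factorization $\eta'_s(b)=\eta_s(\E_N[b])\otimes I_N$ (including the compensating $1/N$ that the printed statement of Lemma \ref{Matrix cpm} drops) is exactly the mechanism the paper leaves implicit in its one-line ``following the proof of Theorem \ref{main}'' conclusion, so your bookkeeping is a correct fleshing-out of the same argument rather than a different one.
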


\section{Operator valued Boolean random matrices} 

In this section, we  consider limit laws of  random matrices having Boolean independent entries.  
Again, we  use the notation  $Pro_n(\bar{s},\bar{t},\ii)$ short for
$$b(i_1;t_1,n)a(i_1,i_2;s_1,n) b(i_2; t_2,n)a(i_2,i_3;s_2,n)\cdots b(i_{m};t_{m},n) a(i_{m},i_{m+1};s_m,n)b(i_{m+1};t_{m+1},n), $$
where $\bar{s}=(s_1,\cdots, s_m)\subset S$, $ \bar{t}=(t_1,\cdots, t_{m+1})\subset T$ and $\ii=(i_1,\cdots,i_{m+1})\subset [n]$.

\begin{lemma} \label{nonvanishing}\normalfont
Let $\{a(i,j;n,s)|s\in S, 1\leq i\leq j \leq m\}$ be a family of $\range$-valued  Boolean independent random variables such that  $\E[a(i,j;s,n))]=0,$ for all $1\leq i,j\leq n$, $s\in S$. Suppose that $\ker\ii=OK(\pi)$ such that $\pi\in NC_2(m)$.  Then $\E[Pro_n(\bar{s},\bar{t},\ii)]=0$
unless $\pi$ is an interval pair partition.
\end{lemma}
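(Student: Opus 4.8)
The plan is to expand $\E[Pro_n(\bar s,\bar t,\ii)]$ using the defining factorization property of Boolean independence over $\range$, and to show that whenever $\pi$ fails to be an interval pair partition, one of the factors $a(i_p,i_{p+1};s_p,n)$ is forced to sit alone inside its subalgebra; its mean-zero property then collapses the whole expectation to $0$.

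First I would record which of the $m$ variables $a_k:=a(i_k,i_{k+1};s_k,n)$ lie in a common $*$-subalgebra: the variables $a_k$ and $a_l$ generate the same subalgebra over $\range$ precisely when $\{i_k,i_{k+1}\}=\{i_l,i_{l+1}\}$ and $s_k=s_l$, and distinct such subalgebras are Boolean independent over $\range$. The crucial combinatorial input is to identify, for a fixed position $p$, the set of positions $r$ with $\{i_r,i_{r+1}\}=\{i_p,i_{p+1}\}$. Since $\ker\ii=OK(\pi)$, Lemma \ref{single entry} shows this set is exactly the block of $d(\ker\ii)=d(OK(\pi))$ containing $p$, while Lemma \ref{anti-oriented paired} shows that every block $\{p,q\}$ of $\pi$ (with $p<q$) satisfies $i_p=i_{q+1}$ and $i_{p+1}=i_q$, hence $\{i_p,i_{p+1}\}=\{i_q,i_{q+1}\}$. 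Together these give $d(OK(\pi))=\pi$, so the only position other than $p$ whose spatial index equals that of $p$ is the $\pi$-partner $q$.

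Next I would carry out the factorization. Writing $Pro_n(\bar s,\bar t,\ii)=b(i_1;t_1,n)a_1b(i_2;t_2,n)a_2\cdots a_mb(i_{m+1};t_{m+1},n)$, I group the maximal runs of consecutive factors $a_k$ lying in one common subalgebra into single elements $Q_1,\dots,Q_R$, absorbing the intervening and boundary factors $b(i_k;t_k,n)\in\range$ as $\range$-coefficients; each $Q_r$ then contains at least one $a$-factor, hence has no constant term, and consecutive $Q_r$'s lie in distinct subalgebras. The Boolean independence characterization yields $\E[Pro_n(\bar s,\bar t,\ii)]=\E[Q_1]\E[Q_2]\cdots\E[Q_R]$. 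If $\pi$ is not an interval pair partition it has a block $\{p,q\}$ with $q\ge p+2$; by the previous paragraph neither $p-1$ nor $p+1$ shares the spatial index of $p$, so $a_p$ lies in a subalgebra distinct from those of $a_{p-1}$ and $a_{p+1}$ and therefore forms a run by itself, say $Q_r=b'a_pb''$ with $b',b''\in\range$. Then $\E[Q_r]=b'\E[a_p]b''=0$, forcing $\E[Pro_n(\bar s,\bar t,\ii)]=0$. Contrapositively, nonvanishing of the expectation requires every block of $\pi$ to be an interval, i.e. $\pi\in IN_2(m)$.

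The coefficient bookkeeping in the grouping step is routine. The one place needing care is the combinatorial identity $d(OK(\pi))=\pi$, which guarantees that the unique factor sharing a subalgebra with $a_p$ is its $\pi$-partner $a_q$; this is exactly what converts the presence of a non-interval block into an \emph{isolated} mean-zero factor. I expect this identification, rather than the Boolean factorization itself, to be the main obstacle.
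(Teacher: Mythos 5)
Your proof is correct and is essentially the paper's own argument, written out in more detail: the paper's proof of Lemma \ref{nonvanishing} is precisely the observation that a non-interval block of $\pi$ produces a position $p$ with $p\not\sim_\pi p-1$ and $p\not\sim_\pi p+1$, whence $a(i_p,i_{p+1};s_p,n)$ is Boolean independent from its two neighbors and its vanishing mean annihilates the whole expectation; your grouping into maximal runs $Q_1,\dots,Q_R$ with no constant term is the careful version of that one-line factorization. One caveat on the combinatorial step you yourself flag as the crux: your derivation of $d(OK(\pi))=\pi$ from Lemmas \ref{single entry} and \ref{anti-oriented paired} only yields the inclusion $\pi\leq d(OK(\pi))$ (each $\pi$-block sits inside a block of $d(OK(\pi))$), whereas what your isolation argument actually uses is the \emph{reverse} direction --- that $p$ shares its index pair with no position other than its $\pi$-partner, in particular not with $p\pm1$ --- and neither cited lemma supplies this (the paper's proof uses the same fact silently). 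It is true and easy to patch: for $\sigma=OK(\pi)$ one has $j\sim_\sigma k$ (for $j<k$) exactly when $\{j,\dots,k-1\}$ is a union of blocks of $\pi$, and running the two cases in the definition of $d$ shows that $s\sim_{d(\sigma)}t$ forces $\{s,t\}$ to be a block of $\pi$: the case $s\sim_\sigma t$, $s+1\sim_\sigma t+1$ is impossible for a pair partition, since it would put the partner of $s$ in $\{s+1,\dots,t-1\}$ while $s\notin\{s+1,\dots,t\}$, and the case $s\sim_\sigma t+1$, $s+1\sim_\sigma t$ forces the partner of $s$ to be $t$. With that verification inserted, your argument is complete and coincides with the paper's.
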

\begin{proof}
If $\pi$ is not an interval partition, then there is a $p$ such that $p\not\sim_\pi p+1,p-1$. Therefore  $a(i_{p},i_{p+1};s_p,n)$ is boolean independent from $a(i_{p+1},i_{p+2};s_{p+1},n)$ and $a(i_{p-1},i_{p};s_{p-1},n)$. Since $\E[a(i_{p},i_{p+1};s_p,n)]=0$, the equation follows.
 \end{proof}

\begin{remark}\normalfont
It is obvious that $IN_2(m)=\emptyset$ when $m$ is odd and $IN_2(m)$ contains exactly one element $1_{IN_2(m)}=\{\{1,2\},\cdots,\{m-1,m\}\}$  when $m$ is even. 
\end{remark}

\begin{theorem}\label{main Boolean}\normalfont

 Let 
$$ Z(s,n)=\sum\limits_{1\leq i,j, \leq n}  a(i,j;s,n)\otimes e(i,j;n)\in \domain \otimes M_n$$

be $\range$-valued random matrices  for  $s$ taking values in some set $S$ such that 
\begin{itemize}
\item[Y1)]  $a(i,j;s,n)=a(i,j;s,n)^*,$  for all $1\leq i,j\leq n$, $s\in S$,
\item[Y2)]  $\E[a(i,j;s,n))]=0,$ for all $1\leq i,j\leq n$, $s\in S$,
\item[Y3)]  $\E[a(i,j;s,n)\bullet a(j,i;s,n))]=\frac{1}{n}\eta_s(\bullet)$ is a completely positive map from $\range$ to $\range$, for all $1\leq i,j\leq n$, $s\in S$,
\item[Y4)] for each $m$, there exists an $M_m>0$ such that  $$\sup\limits_{\substack{s_1,...,s_m\in S\\ 1\leq i_1,\cdots i_m, j_1,\cdots, j_m
\leq n}}\|{a(i_1,j_1;s_1,n)b_1a(i_2,j_2;s_2,n)b_2\cdots b_{m-1}a(i_m,j_m;s_m,n)}\|\leq M_mn^{-m/2}\prod\limits_{k=1}^{m-1}\|b_k\|,$$
\item[Y5)] the family $\{a(i,j;n,s)|s\in S, 1\leq i\leq j \leq m\}$ of random variables are $\range-$valued {\bf Boolean independent}.
\end{itemize}

Then the joint distributions of $\{Z(s,n)\}_{s\in S}$ converge to the distribution of the family of Boolean independent Bernoulli random variables $\{Z_s\}_{s\in S}$.

In addition,  let 
$D(t,n)=\sum\limits_{1\leq i \leq n}  b(i;t,n)\otimes e(i,i,n)\in\range\otimes M_n$ for  $t$ taking values in some set $T$ such that
\begin{itemize}
\item[D1)] the joint distribution of $(D(t,n))_{t\in T}$ converges weakly in norm to the joint distribution of $(D_t)_{t\in T}$.
\item[D2)] $$\lim\limits_{n\rightarrow \infty}\frac{\|D(t,n)\|^k}{n}=0,$$ for all $t\in T$, $k\in\N$.
\item[D3)] $$ \limsup\frac{\sum\limits_{i=1}^n\|b(i;t,n)\|^k}{n}< \infty,$$ for all $t\in T$, $k\in\N$.
\end{itemize}
  
Suppose that $\{Z_s\}_{s\in S}$ and $(D_t)_{t\in T}$ are from the $\range$-valued probability space $(\Domain,\EE:\Domain\rightarrow \range)$. Then, we have that 
\begin{equation}\label{Boolean matrices}
\begin{aligned}
&\EE[P_1Z_{s_1}P_2Z_{s_2}\cdots P_m Z_{s_m} P_{m+1}]\\
=&\left\{\begin{array}{lc}
\EE[P_1 \eta_{s_1}(\EE[P_2])P_3\eta_{s_3}(\EE[P_4])\cdots P_{m-1}\eta_{s_m-1}(\EE[P_m])P_{m+1} ]& \text{if $\ker \bar{s}\geq 1_{IN_2(m)}$}\\
 0& \text{otherwise,}
\end{array}\right.
\end{aligned}
\end{equation}
where $P_1,\cdots,P_{m+1}$ are from the algebra generated by $(D_t)_{t\in T}$ and $\range$, $\bar{s}$ is the sequence $(s_1,\cdots,s_m).$
\end{theorem}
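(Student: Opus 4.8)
The plan is to run the argument of Section~4 essentially verbatim through its reduction steps and then exploit the fact that Lemma~\ref{nonvanishing} collapses the surviving sum to a single, very simple partition. First I would perform the same reductions as in the proof of Theorem~\ref{main}: by Proposition~\ref{Existence of limit distribution} and the polynomial manipulations preceding it, it suffices to prove Equation~\ref{Boolean matrices} with each $P_i$ replaced by a single diagonal generator $D_{t_i}$, i.e. to compute the limit of $\E_n[D(t_1,n)Z(s_1,n)\cdots Z(s_m,n)D(t_{m+1},n)]$. Expanding the $(i,i)$-entries exactly as in Equation~\ref{simple equation 1} rewrites this as $\frac1n\sum_{\sigma\in CP(m+1)}\sum_{\ker\ii=\sigma}\E[Pro_n(\bar{s},\bar{t},\ii)]$.

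Next I would verify that the two independence-agnostic reductions survive the passage to Boolean independence. The norm estimates, namely Lemma~\ref{infinitesimal 1} together with the combinatorial Lemmas~\ref{even length} and~\ref{partial sum}, use only conditions Y4) and D1)--D3), so they apply unchanged. The only place where the independence hypothesis enters the reduction is the vanishing of $\E[Pro_n(\bar{s},\bar{t},\ii)]$ when $d(\ker\ii)$ has a singleton block $\{p\}$; for Boolean independence this still holds. Indeed, by Lemma~\ref{single entry} the factor $a(i_p,i_{p+1};s_p,n)$ lies in a Boolean-independent subalgebra $\A_p\supset\range$ containing no other $a$-factor of the word, so grouping $Pro_n$ into maximal runs lying in a common subalgebra isolates $b(i_p;t_p,n)\,a(i_p,i_{p+1};s_p,n)\,b(i_{p+1};t_{p+1},n)$ as a single group whose $\E$-value is $b(i_p;t_p,n)\E[a(i_p,i_{p+1};s_p,n)]b(i_{p+1};t_{p+1},n)=0$, and multiplicativity of $\E$ over alternating words forces $\E[Pro_n]=0$. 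Consequently the reduction proceeds exactly as before down to $\frac1n\sum_{\pi\in NC_2(m)}\sum_{\ker\ii=OK(\pi)}\E[Pro_n(\bar{s},\bar{t},\ii)]+o(1)$.

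At this point I would invoke Lemma~\ref{nonvanishing}, which kills every $\pi\in NC_2(m)$ that is not an interval pair partition. Since $IN_2(m)=\emptyset$ for odd $m$, the whole expression is $o(1)$, matching the vanishing odd moments on the right of Equation~\ref{Boolean matrices}; and since $IN_2(m)$ consists of the single element $1_{IN_2(m)}=\{\{1,2\},\dots,\{m-1,m\}\}$ for even $m$, only $\pi=1_{IN_2(m)}$ remains. Moreover, because $a(i,j;s_1,n)$ and $a(i,j;s_2,n)$ are Boolean independent and mean zero for $s_1\neq s_2$, the same grouping argument shows $\E[Pro_n]=0$ unless $1_{IN_2(m)}\leq\ker\bar{s}$, which produces the dichotomy $\ker\bar{s}\geq 1_{IN_2(m)}$ in Equation~\ref{Boolean matrices}. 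It then remains to evaluate, for even $m$ with $\ker\bar{s}\geq 1_{IN_2(m)}$, the quantity $\frac1n\sum_{\ker\ii=OK(1_{IN_2(m)})}\E[Pro_n(\bar{s},\bar{t},\ii)]$.

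For the final computation I would first identify $OK(1_{IN_2(m)})=\{\{1,3,5,\dots,m+1\},\{2\},\{4\},\dots,\{m\}\}$ by Lemma~\ref{anti-oriented paired}, so that $\ker\ii=OK(1_{IN_2(m)})$ forces $i_1=i_3=\cdots=i_{m+1}=:i$ with $i_2,i_4,\dots,i_m$ distinct and distinct from $i$. Here the two index-pairs in each block $\{2k-1,2k\}$ coincide and carry the same $s$, so each run $a(i,i_{2k};s_{2k-1},n)\,b(i_{2k};t_{2k},n)\,a(i_{2k},i;s_{2k},n)$ lies in its own Boolean-independent subalgebra; absorbing the interspersed factors $b(i;t_{2k+1},n)\in\range$ and using multiplicativity of $\E$ over alternating words together with condition Y3), one gets $\E[Pro_n(\bar{s},\bar{t},\ii)]=n^{-m/2}\,b(i;t_1,n)\,\eta_{s_1}(b(i_2;t_2,n))\,b(i;t_3,n)\cdots\eta_{s_{m-1}}(b(i_m;t_m,n))\,b(i;t_{m+1},n)$. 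Summing, the distinctness constraints may be dropped at the cost of $o(1)$ (the collision terms have kernel with fewer than $m/2+1$ blocks and are controlled exactly as in Lemma~\ref{infinitesimal 1}), each even index then sums to $\frac1n\sum_{j}b(j;t_{2k},n)=\E_n[D(t_{2k},n)]$, and the residual average over $i$ is recognized as $\E_n[D(t_1,n)\,\eta_{s_1}(\E_n[D(t_2,n)])\,D(t_3,n)\cdots D(t_{m+1},n)]$; condition D1) then passes to the limit to give the right-hand side of Equation~\ref{Boolean matrices}, which simultaneously identifies the limit family as Boolean independent Bernoulli. The main obstacle is that, unlike conditional independence, Boolean independence does not satisfy $\E[y_1my_2]=\E[y_1\E[m]y_2]$, so Lemma~\ref{recrusive moments } is unavailable and every factorization must instead be produced by grouping the word into maximal single-subalgebra runs and appealing to multiplicativity of $\E$ over alternating words; the bookkeeping of the interspersed $\range$-factors $b(i_k;t_k,n)$ and of the boundary terms is where care is needed.
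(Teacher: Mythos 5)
Your proposal is correct, and it follows the same skeleton as the paper's proof: reduce to words $D(t_1,n)Z(s_1,n)\cdots Z(s_m,n)D(t_{m+1},n)$, run the partition reductions of Section~4, use Lemma~\ref{nonvanishing} to collapse the surviving sum to the single partition $1_{IN_2(m)}$ (with the dichotomy on $\ker\bar s$), and evaluate that term. Where you genuinely diverge is at the last step: the paper simply cites Equation~\ref{limit moments}, whose derivation in Section~4 rests on conditional independence --- both the property-P vanishing argument and Lemmas~\ref{recrusive moments } and \ref{mixed moments 3} use the identity $\E[a_1a_2a_3]=\E[a_1\E[a_2]a_3]$, which fails for Boolean independence, so the citation is not literally licensed. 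You correctly flag this obstacle and repair it: you re-derive the vanishing for kernels without property~P and the factorization $\E[Pro_n(\bar s,\bar t,\ii)]=n^{-m/2}\,b(i;t_1,n)\,\eta_{s_1}(b(i_2;t_2,n))\cdots b(i;t_{m+1},n)$ by grouping the word into maximal single-subalgebra runs (absorbing the $\range$-factors $b(i;t,n)$ into adjacent runs) and invoking Boolean multiplicativity over alternating words --- legitimate since consecutive runs lie in distinct subalgebras and the Boolean axiom only constrains consecutive indices. For $\pi'=1_{IN_2(m)}$ the inner segments are single diagonal factors, so this direct computation, together with your collision estimates in the style of Lemmas~\ref{infinitesimal 1} and \ref{partial sum}, recovers exactly what the paper's appeal to Equation~\ref{limit moments} was meant to deliver. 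In short, your route is the paper's in outline but is more self-contained precisely where Boolean independence forces a change; the only gloss is passing $\eta_{s}(\E_n[D(t,n)])\to\eta_s(\EE[D_t])$ inside the outer average, which needs the D3) averages (as in the paper's own $m=2$ computation) rather than norm bounds on $D(t,n)$, and your argument effectively uses them.
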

\begin{proof} 
Notice that $\{Z_s\}_{s\in S}$ is a family of a Boolean independent Bernoulli random variables if Equation \ref{Boolean matrices} holds when $P_1\cdots,P_{m+1}$ are chose to be constants. Therefore, it suffices to prove Equation \ref{Boolean matrices}.  According the work we did in the beginning of Section 4, it suffices to show that
\begin{equation}\label{Boolean matrices}
\begin{aligned}
&\lim\limits_{n\rightarrow \infty}\frac{1}{n}\sum\limits_{\ii\subset [n]}\E[Pro_n(\bar{s},\bar{t},\ii)]\\
=&\left\{\begin{array}{lc}
\EE[D_{t_1} \eta_{s_1}(\EE[D_{t_2}])\cdots D_{t_{m-1}}\eta_{s_{m-1}}(\EE[D_{t_m}])D_{t_{m+1}} ]& \text{if $\ker \bar{s}\geq 1_{IN_2(m)}$}\\
 0& \text{otherwise,}
\end{array}\right.
\end{aligned}
\end{equation}

By the simplifications we used in the proof of Theorem \ref{main}, we have 
$$
\frac{1}{n}\sum\limits_{\ii\subset [n]}\E[Pro_n(\bar{s},\bar{t},\ii)]=\frac{1}{n}\sum\limits_{\substack{\pi=1_{IN_2(m)}\\ \pi\leq \ker\bar s}}\sum\limits_{\substack{\ii\subset [n]\\ \ker\ii=OK(\pi)} }\E[Pro_n(\bar{s},\bar{t},\ii)]+o(1).$$

Therefore, $$\lim\limits_{n\rightarrow \infty}\frac{1}{n}\sum\limits_{\ii\subset [n]}\E[Pro_n(\bar{s},\bar{t},\ii)]=0$$ when 
$\ker\bar{s}\not\leq  1_{IN_2(m)}$.
When $\ker\bar{s}\leq  1_{IN_2(m)}$, notice that $[1_{IN_2(m)}]_{\stackrel{out}{\sim}}=\{1_{IN_2(m)}\}$, the statements follows Equation \ref{limit moments}.
\end{proof}

\bibliographystyle{plain}

\bibliography{references}

\noindent Department of Mathematics\\
Indiana University	\\
Bloomington, IN 47401, USA\\
E-MAIL: liuweih@indiana.edu \\

\end{document}